\newtheorem{Theorem}{Theorem}[section]
\newtheorem{Lemma}[Theorem]{Lemma}
\newtheorem{Proposition}[Theorem]{Proposition}
\theoremstyle{definition} 
\newtheorem{Remark}[Theorem]{Remark}
\newtheorem{Definition}[Theorem]{Definition}
\newcommand{\R}{\mathbb{R}}
\newcommand{\N}{\mathbb{N}}
\newcommand{\eps}{\varepsilon}
\newcommand{\cal}{\mathcal}
\newcommand{\maxindex}{\hat{\iota}}
\providecommand\@dotsep{5}
\def\listtodoname{List of Todos}
\def\listoftodos{\@starttoc{tdo}\listtodoname}
\begin{document}


\title{Sparse Control of alignment models in high dimension}

\author[Mattia Bongini]{Mattia Bongini}
\author[Massimo Fornasier]{Massimo Fornasier}
\author[Oliver Junge]{Oliver Junge}
\author[Benjamin Scharf]{Benjamin Scharf}

\keywords{Cucker--Smale model, consensus emergence, sparse control, Johnson--Lindenstrauss embedding, dimensionality reduction}



\address{Mattia Bongini. Technische Universit\"at M\"unchen, Fakult\"at Mathematik, Boltzmannstrasse 3, D-85748 Garching, Germany.  E-mail {\sf mattia.bongini@ma.tum.de}}
\address{Massimo Fornasier. Technische Universit\"at M\"unchen, Fakult\"at Mathematik, Boltzmannstrasse 3, D-85748 Garching, Germany.  E-mail {\sf massimo.fornasier@ma.tum.de}}
\address{Oliver Junge. Technische Universit\"at M\"unchen, Fakult\"at Mathematik, Boltzmannstrasse 3, D-85748 Garching, Germany.  E-mail {\sf oj@tum.de}}
\address{Benjamin Scharf. Technische Universit\"at M\"unchen, Fakult\"at Mathematik, Boltzmannstrasse 3, D-85748 Garching, Germany.  E-mail {\sf benjamin.scharf@ma.tum.de}}


\begin{abstract}For high dimensional particle systems, governed by smooth nonlinearities depending on mutual distances between particles, one can construct low-dimensional representations of the dynamical system,  which allow the learning of nearly optimal control strategies in high dimension with overwhelming confidence. In this paper we present an instance of this general statement tailored to the sparse control of models of consensus emergence in high dimension, projected to lower dimensions by means of random linear maps. We show that one can steer, nearly optimally and  with high probability, a high-dimensional alignment model to consensus by acting at each switching time on one agent of the system only, with a control rule chosen essentially exclusively according to information gathered from a randomly drawn low-dimensional representation of the control system.
\end{abstract}

\maketitle


\section*{Introduction}
In view of the increasing technical ability of collection of large amounts of time-evolving data and of potentially modeling them into  high-dimensional dynamical systems,  the controllability of complex multi-agent interactions has become an actual challenge of paramount importance due to its social and economical impact. In this paper, we shall investigate the applicability of the following
\\

\noindent {\bf Meta-theorem.} \emph{For high dimensional particle systems, governed by smooth nonlinearities depending on mutual distances between particles, one can construct low-dimensional representations of the dynamical system,  which allow the learning of nearly optimal control strategies in high dimension with overwhelming confidence.}\\

\noindent

As control is usually goal-oriented, hence highly dependent on the specific dynamical system, investigating the qualitative applicability of this statement in its full generality may risk to dilute its quantitative understanding. Thus we shall prove in this paper a specific instance of it, which conveys nonetheless all the relevant aspects and technical issues potentially encountered in other situations. In particular we shall focus on alignment models inspired by the seminal work of Cucker and Smale \cite{CS07,cusm07}.
In this class of dynamical systems the particles influence each other according to a positive rate of communication $a\left(\left\Vert x_{i}(t)-x_{j}(t)\right\Vert\right)$ depending on the mutual distance towards the alignment of the entire group to a common conduct, and they read
$$
\left \{\begin{array}{lll}
\dot{x}_{i}(t) & = & v_{i}(t)\\
\dot{v}_{i}(t) & = & \frac{1}{N}\underset{j=1}{\overset{N}{\sum}}a\left(\left\Vert x_{i}(t)-x_{j}(t)\right\Vert\right)\left(v_{j}(t)-v_{i}(t)\right), \quad i=1,\dots, N.
\end{array}
\right .
$$
The classically mentioned inspiring application is the modeling of the emergence of a flock moving with the same velocity in a group of migrating birds.  However, the emergence of a common direction may be depending on whether the initial conditions lay within a corresponding basin of attraction and such conditional pattern formation has been fully explored, for instance, in  \cite{CFPT12,CFPT14,HHK10}: for 
\begin{align*}
X(t)&:=\overline{x^2(t)}-\overline{x}^2(t), \\
V(t)&:=\overline{v^2(t)}-\overline{v}^2(t)
\end{align*}
the following result holds.
\begin{Theorem}[\cite{HHK10}]\label{hhktm} If $\int_{\sqrt{X(0)}}^{\infty} a\left(\sqrt{2N}r\right) \ dr \geq \sqrt{V(0)}$, then $\lim_{t\rightarrow \infty} V(t)=0$, meaning that $\lim_{t\rightarrow \infty} v_i(t) = \overline v$, for all $i=1,\dots,N$.
\end{Theorem}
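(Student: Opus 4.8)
The plan is to derive a closed system of differential inequalities for the two scalar quantities $X$ and $V$ and then feed it into a Lyapunov-type functional whose monotonicity encodes exactly the integral hypothesis. First I would record the elementary identities
\[
X=\frac{1}{2N^{2}}\sum_{i,j}\|x_i-x_j\|^2,\qquad V=\frac{1}{2N^{2}}\sum_{i,j}\|v_i-v_j\|^2,
\]
obtained by expanding the squares and using $\sum_{i,j}\langle x_i,x_j\rangle=\|\sum_i x_i\|^2$; these recast the variances as symmetric pairwise sums, which is what makes the antisymmetry of the interaction usable. Differentiating the second identity along the flow and symmetrising $i\leftrightarrow j$ (the factor $v_j-v_i$ is antisymmetric while $a(\|x_i-x_j\|)$ is symmetric, which also yields $\dot{\overline v}=0$) gives the dissipation identity
\[
\dot V=-\frac{1}{N^{2}}\sum_{i,j}a(\|x_i-x_j\|)\,\|v_i-v_j\|^2\le 0 .
\]

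Next I would convert this into a usable scalar bound. Since $a$ is positive and non-increasing, and since the group diameter obeys $\max_{i,j}\|x_i-x_j\|\le\sqrt{2N}\,\sqrt{X}$ — which follows from $\|x_i-x_j\|^2\le 2(\|x_i-\overline x\|^2+\|x_j-\overline x\|^2)\le 2\sum_k\|x_k-\overline x\|^2=2NX$ — every coefficient satisfies $a(\|x_i-x_j\|)\ge a(\sqrt{2N}\sqrt X)$, whence $\dot V\le -2\,a(\sqrt{2N}\sqrt X)\,V$, i.e. $\tfrac{d}{dt}\sqrt V\le -a(\sqrt{2N}\sqrt X)\sqrt V$. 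In parallel, differentiating the first identity and applying Cauchy--Schwarz gives $|\dot X|\le 2\sqrt{XV}$, hence $\bigl|\tfrac{d}{dt}\sqrt X\bigr|\le\sqrt V$.

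The decisive step is then to combine these through the functional
\[
\mathcal E(t):=\sqrt{V(t)}+\int_{0}^{\sqrt{X(t)}}a(\sqrt{2N}\,\tau)\,d\tau .
\]
Using $\tfrac{d}{dt}\sqrt X\le\sqrt V$ together with $a\ge 0$ and the two inequalities above, the cross term cancels and $\dot{\mathcal E}\le 0$; integrating yields
\[
\sqrt{V(t)}+\int_{\sqrt{X(0)}}^{\sqrt{X(t)}}a(\sqrt{2N}\,\tau)\,d\tau\le\sqrt{V(0)} .
\]

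Finally I would read off the conclusion. Since $a>0$, the map $s\mapsto\int_{\sqrt{X(0)}}^{s}a(\sqrt{2N}\tau)\,d\tau$ is strictly increasing, so the displayed estimate forces $\int_{\sqrt{X(0)}}^{\sqrt{X(t)}}a(\sqrt{2N}\tau)\,d\tau\le\sqrt{V(0)}$; by the hypothesis $\int_{\sqrt{X(0)}}^{\infty}a(\sqrt{2N}\tau)\,d\tau\ge\sqrt{V(0)}$ this confines $\sqrt{X(t)}$ to a finite barrier $X^{*}$ for all $t$. Boundedness of $X$ is the crux: it gives a uniform positive lower bound $a(\sqrt{2N}\sqrt{X(t)})\ge a(\sqrt{2N}X^{*})=:a^{*}>0$, and inserting this into $\dot V\le-2a(\sqrt{2N}\sqrt X)V$ produces $V(t)\le V(0)e^{-2a^{*}t}\to 0$. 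The main obstacle is precisely this a priori control of the spatial spreading $X$, since without it the communication rate could decay to zero and the velocity alignment could stall; the functional $\mathcal E$ is designed so that the integral hypothesis is exactly the reserve of dissipation budget needed to keep the diameter bounded, and the borderline case of equality is handled by the same monotonicity argument combined with the fact that $V$ is non-increasing.
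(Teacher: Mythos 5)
Your proposal is correct and follows essentially the same route the paper relies on: your dissipation bound $\frac{d}{dt}V(t)\le -2a\bigl(\sqrt{2NX(t)}\bigr)V(t)$ is exactly Lemma \ref{le:HaHaKim} (quoted from \cite{HHK10}, with the proof deferred to the appendix of \cite{CFPT14}), and your Lyapunov functional $\mathcal{E}(t)=\sqrt{V(t)}+\int_{0}^{\sqrt{X(t)}}a(\sqrt{2N}\,\tau)\,d\tau$ combined with $\bigl|\frac{d}{dt}\sqrt{X}\bigr|\le\sqrt{V}$ is the standard argument behind the functional $\gamma$ there, including the uniform barrier on $X$ and the ensuing exponential decay of $V$. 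Your handling of the borderline equality case (using that $V$ is non-increasing, so a positive limit $V_\infty$ would strictly tighten the integral bound and reinstate a finite barrier) is also the standard completion of that argument.
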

In those initial conditions where 
$$
\int_{\sqrt{X(0)}}^{\infty} a\left(\sqrt{2N}r\right) \ dr < \sqrt{V(0)},
$$
and the convergence towards alignment is not anymore guaranteed, despite being desirable, for instance when it comes to unanimous decisions in assemblies, one may wonder whether the application of a parsimonious external control can lead nevertheless to consensus emergence. 

\begin{figure}[H]
\centering
\includegraphics[scale=0.8]{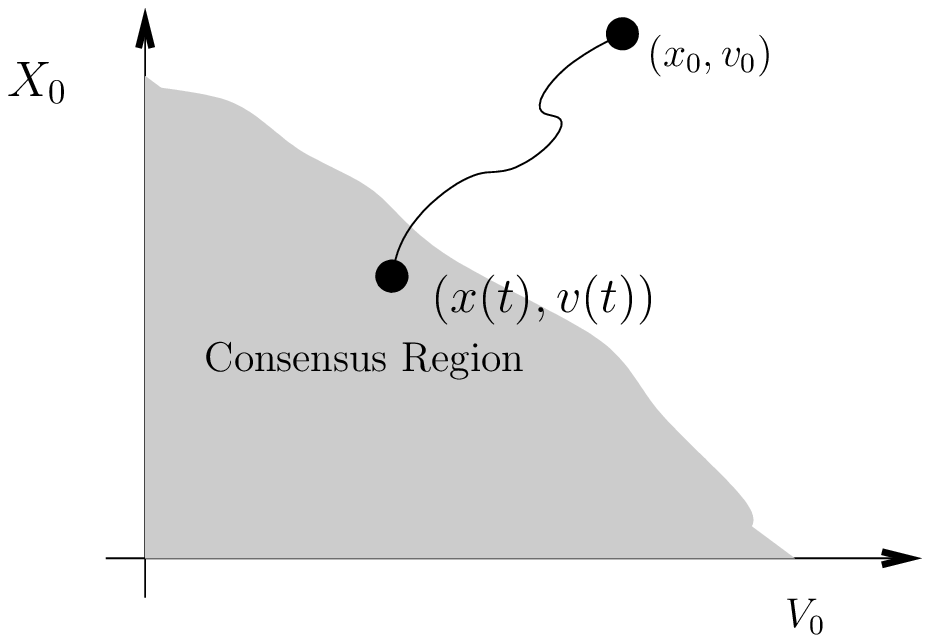}
\caption{Steering the alignment system to a point fulfilling the conditions of Theorem \ref{hhktm} towards consensus formation.} 
\label{consreg}
\end{figure}

This issue has been recently explored in the series of papers \cite{CFPT12,CFPT14}, where the {\it sparse} controllability of alignment models towards consensus have been established (see Figure \ref{consreg}) regardless of the dimensionality of the problem, see also \cite{CD14,bkf14} for extensions and generalizations.
In particular, alignment should not be interpreted exclusively relative to motion in the three dimensional Euclidean space, but there are several instances of ``abstract alignment'' which may occur in high-dimension, for instance in \cite{ahn13} the authors consider an application of alignment models to predict the collective phenomenon of asset pricing and volatilities in financial markets. Therefore, in those circumstances where the dimensionality of the dynamics is very high, it becomes a relevant question whether it is possible to define control strategies of the dynamics by observing instances of the system in lower dimension.
\\
In recent years, several techniques have been developed in order to reduce the dimensionality of time-evolving point  clouds, such as {\it diffusion maps} applied to networks changing in time \cite{coifman14} and geometric multiscale dimensionality reductions \cite{maggioni12}, just to mention a few. Besides these perhaps involved methods based on computationally demanding nonlinear embeddings of the high-dimensional clouds in lower dimension, Johnson--Lindenstrauss embeddings, introduced in the seminal work \cite{JL84}, have the remarkable property of being simple {\it linear operators} $M \in \mathbb R^{k\times d}$ preserving the distances between points in the cloud $\mathcal P \subset \mathbb R^d$ up to an $\varepsilon$-distortion:
$$
(1-\varepsilon) \| x-x' \| \leq \| M x- M x' \| \leq (1+\varepsilon) \| x-x' \|, \quad \text{for all} \quad x,x' \in \mathcal P,
$$
where 
$$
k \sim \varepsilon^{-2} \log (\# \mathcal P).
$$
 As Johnson--Lindenstrauss embeddings with such scaling of the low-dimension are constructed by generating random projections, the quasi-isometry property on the point cloud is usually stated with a certain (high) probability. \\
The random linear projection of high-dimensional systems governed by smooth nonlinearities depending on mutual distances has been investigated in \cite{FHV13}: roughly speaking, given a dynamical system in high-dimension  $d \gg 1$ governed by locally Lipschitz functions $f_i:\mathbb R_+^{N\times N} \to \mathbb R^d$
$$
 \dot z_i = f_i ((\|z_j - z_{\ell}\|)_{j{\ell}}) \in \mathbb R^d, \quad i=1,\dots,N
$$
 and its lower-dimensional counterpart
 $$
 \dot \zeta_i = M f_i ((\|\zeta_j - \zeta_{\ell}\|)_{j{\ell}}) \in \mathbb R^k, \quad i=1,\dots,N,
 $$
where $M:\mathbb R^d \to \mathbb R^k$ is a Johnson--Lindenstrauss linear embedding for  $k \sim \varepsilon^{-2} \log(N)$, the following finite time approximation holds
 $$
\| \zeta_i(t)- M z_i(t)\| \leq C_T \varepsilon, \quad \mbox{ for all } t \in [0,T],
 $$
with high probability.  If we applied such linear projections verbatim to each equation of a Cucker--Smale system, we would obtain the following approximation 
\begin{align*}
\frac{d}{d t}Mv_i(t)&=\frac{1}{N} \sum\limits_{j=1}^N a\left(\left\Vert x_j(t)-x_i(t)\right\Vert\right) \left(Mv_j(t)-Mv_i(t)\right) \\
&\overset{\text{wish}}{\sim} \frac{1}{N} \sum\limits_{j=1}^N a\left(\left\Vert Mx_j(t)-Mx_i(t)\right\Vert\right) \left(Mv_j(t)-Mv_i(t)\right),
\end{align*} 
leading to the formulation of the low-dimensional system in $\mathbb R^k$
$$
\left \{\begin{array}{lll}
\dot{y}_{i}(t) & = & w_{i}(t)\\
\dot{w}_{i}(t) & = & \frac{1}{N}\underset{j=1}{\overset{N}{\sum}}a\left(\left\Vert y_{i}(t)-y_{j}(t)\right\Vert\right)\left(w_{j}(t)-w_{i}(t)\right), \quad i=1,\dots, N,
\end{array}
\right .
$$
with initial conditions $(y(0),w(0)) = (M x(0),Mv(0))$. The first result of this paper, refining and generali\-zing those in \cite{FHV13}, is roughly summarized as follows.
\begin{Theorem} Let $(x,v)$ be a solution of the $d$-dimensional Cucker--Smale system for given initial values $x(0),v(0) \in \R^{N\times d}$, and let  $M \in \R^{k \times d}$ be a Johnson--Lindenstrauss matrix
for a suitable $\eps>0$ distortion parameter and low dimension $k$ depending on the logarithm of the number of agents $N$.  Then the $k$-dimensional solution $(y,w)$ with initial values $(y(0),w(0))=(Mx(0),Mv(0))$ stays close to the projected $d$-dimensional trajectory $(Mx,Mv)$, i.e.,
\begin{align}\label{fintim}
\|y(t)-Mx(t)\| + \|w(t)-Mv(t)\| \lesssim \eps e^{Ct}, \quad t \leq T. 	
\end{align}
 \end{Theorem}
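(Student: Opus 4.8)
The plan is to treat the statement as a Grönwall-type comparison between two almost identical dynamical systems and to isolate exactly where the Johnson--Lindenstrauss distortion enters. First I would observe that the projected high-dimensional trajectory $(Mx,Mv)$ and the genuine low-dimensional solution $(y,w)$ obey the \emph{same} velocity-averaging structure; they differ only in the argument fed into the communication rate $a$: the former still sees the original mutual distances $\|x_i(t)-x_j(t)\|$, whereas the latter sees the reduced distances $\|y_i(t)-y_j(t)\|$. Introducing the error variables $e_i^x:=y_i-Mx_i$ and $e_i^v:=w_i-Mv_i$, which vanish at $t=0$ by the choice of initial data, one gets immediately $\dot e_i^x=e_i^v$, while the velocity error satisfies
\begin{align*}
\dot e_i^v=\frac1N\sum_{j=1}^N a(\|y_i-y_j\|)\,(e_j^v-e_i^v)+\frac1N\sum_{j=1}^N\big(a(\|y_i-y_j\|)-a(\|x_i-x_j\|)\big)\,M(v_j-v_i).
\end{align*}

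The first sum is a bounded (Lipschitz) coupling of the velocity errors themselves and is harmless; the whole difficulty is concentrated in the second, ``distortion'' sum. Here I would split the increment of $a$ by inserting the projected positions $Mx_i$: the part $|\,\|y_i-y_j\|-\|M(x_i-x_j)\|\,|\le\|e_i^x\|+\|e_j^x\|$ feeds back linearly into the error and is absorbed by Grönwall, whereas the part $|\,\|M(x_i-x_j)\|-\|x_i-x_j\|\,|\le\eps\,\|x_i-x_j\|$ is \emph{precisely} the Johnson--Lindenstrauss distortion and is the only genuinely new $O(\eps)$ source term. Using the local Lipschitz bound on $a$ together with uniform a priori bounds on $[0,T]$ — the velocities of a Cucker--Smale flow remain in the convex hull of the initial velocities, so $\max_i\|v_i(t)\|\le\max_i\|v_i(0)\|$, hence $\|x_i(t)-x_j(t)\|\le R_T$ and $\|M(v_j-v_i)\|\le(1+\eps)\,2\max_i\|v_i(0)\|$ — the distortion sum is bounded by $C\big(\|e_i^x\|+\frac1N\sum_{j}\|e_j^x\|\big)+C\eps$. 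Crucially, the $1/N$ averaging makes all resulting constants independent of the number of agents, provided one measures the error in the mean-field normalized norms used throughout the paper.

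The subtle point — and the step I expect to be the main obstacle — is that the distortion estimate $|\,\|M(x_i-x_j)\|-\|x_i-x_j\|\,|\le\eps\|x_i-x_j\|$ must hold \emph{simultaneously for all $t\in[0,T]$}, whereas a single random draw of $M$ only guarantees the quasi-isometry on a \emph{finite} point cloud. The resolution, which I would make the structural heart of the argument, is that the entire high-dimensional trajectory is confined to the fixed subspace $\mathrm{span}\{x_i(0)\}_i+\mathrm{span}\{v_i(0)\}_i$ of dimension at most $2N$: indeed $v_i(t)$ stays in the span of the initial velocities because $\dot v_i$ is a linear combination of the $v_j$, and $x_i(t)=x_i(0)+\int_0^t v_i$ then stays in that same fixed subspace. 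Consequently the continuum of difference vectors $x_i(t)-x_j(t)$ lives in one low-dimensional subspace, so a Johnson--Lindenstrauss draw that is a quasi-isometry on a suitable net of this subspace — which is what dictates the admissible low dimension $k$ — controls the distortion uniformly in time with high probability.

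With these ingredients in place the proof concludes routinely: assembling the position and velocity error bounds into a single scalar quantity $E(t):=\|e^x(t)\|+\|e^v(t)\|$ yields a differential inequality of the form $\dot E(t)\le C_1E(t)+C_2\eps$ with $E(0)=0$, where $C_1,C_2$ depend only on $\|a\|_\infty$, the Lipschitz constant of $a$ on $[0,R_T]$, $\max_i\|v_i(0)\|$ and $T$, but neither on $d$ nor on $N$. Grönwall's inequality then gives $E(t)\le\frac{C_2}{C_1}\eps\,(e^{C_1t}-1)\lesssim\eps\,e^{Ct}$ for all $t\le T$, which is exactly \eqref{fintim}. I would finally remark that the only probabilistic ingredient is the uniform validity of the distortion bound along the subspace, so the whole finite-time estimate inherits the same high probability as the underlying Johnson--Lindenstrauss embedding.
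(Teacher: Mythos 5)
Your Gr\"onwall skeleton --- the error variables $e_i^x, e_i^v$, the splitting of the increment of $a$ into the feedback part $\|e_i^x\|+\|e_j^x\|$ and the distortion part $\eps\|x_i-x_j\|$, a priori bounds on the velocities, and the final linear differential inequality with $E(0)=0$ --- coincides with the paper's proof of Theorem \ref{th:Reduction_uncontrolled}, and that part is sound (your sup-norm constants via convex-hull preservation of the velocities are, if anything, slightly cleaner than the paper's $\sqrt{NW(0)}$-type constants). The genuine gap lies in what you call the structural heart of the argument: you obtain the uniform-in-time distortion by asking $M$ to be a quasi-isometry on the entire fixed subspace $\mathrm{span}\{x_i(0)\}_i+\mathrm{span}\{v_i(0)\}_i$, of dimension up to $2N$. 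But a linear map $M\in\R^{k\times d}$ can satisfy $(1-\eps)\|z\|\le\|Mz\|$ on a $2N$-dimensional subspace only if it is injective on it, which forces $k\ge 2N$; the standard net argument moreover needs a net of the unit sphere of that subspace of cardinality $e^{cN\log(1/\eps)}$, so $k\gtrsim \eps^{-2}N\log(1/\eps)$. This is \emph{linear} in $N$, whereas the statement you are proving asserts a low dimension $k$ depending on the \emph{logarithm} of the number of agents. Your argument therefore proves the estimate \eqref{fintim} only for a substantially larger $k$ than claimed.

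The missing idea is the paper's weak Johnson--Lindenstrauss property (Definition \ref{def:JLL_property}) combined with the time-sampling Lemma \ref{le:JLL_cont}: one imposes the strong JL property only at finitely many points $x_i(t_m)-x_j(t_m)$ on each of the $\binom{N}{2}$ difference curves (these are Lipschitz with constant at most $\sqrt{2NV(0)}$ because $V$ is decreasing), so that $k\sim\eps^{-2}\log\cal{N}$ with $\log\cal{N}\sim\log(T\cdot N\cdot d\cdot V(0))+|\log(\delta\eps)|$, i.e.\ logarithmic in $N$ (Remark \ref{eq:numb_points}; Remark \ref{rem:noDinvolved} even removes the $d$-dependence). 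The price is that a strong distortion bound cannot hold near crossings: the paper's two-agent example exhibits a difference $x_1(t)-x_2(t)$ reaching $0$ with nonzero derivative, so your claim that $\left|\,\|M(x_i-x_j)\|-\|x_i-x_j\|\,\right|\le\eps\|x_i-x_j\|$ holds for \emph{all} $t\in[0,T]$ is provably unattainable at polylogarithmic $k$ --- it is rescued in your proof only by the full-subspace embedding, i.e.\ precisely by $k\ge 2N$. The weak property instead tolerates the exceptional regime $\|x_i-x_j\|\le\delta$ and $\|M(x_i-x_j)\|\le\delta$, which injects an additional additive source $2\delta$ into the Gr\"onwall inequality --- visible as the $\delta K_2 t$ term in Theorem \ref{th:Reduction_uncontrolled} --- and the advertised bound $\lesssim\eps\,e^{Ct}$ is then recovered by choosing $\delta\sim\eps$. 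Without this relaxation (or some substitute for it), no argument can simultaneously deliver the uniform distortion you use and the logarithmic dependence of $k$ on $N$ asserted in the statement.
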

As we highlight in details in Section \ref{sec:withoutControl}, not only the approximation \eqref{fintim} holds for finite time, but, remarka\-bly, the lower dimensional representation also shows also a rather impressive faithfulness in terms of the asymptotic (long time) detection of collective behavior emergence, i.e., global alignment occurs in lower dimension $k$ if and only if it occurs in high dimension $d$ with high probability. The key technical tool for proving this result and the ones following is a weak form of the Johnson--Lindenstrauss Lemma, formulated below in Lemma \ref{le:JLL_cont}, valid for continuous trajectories and not only for clouds of points. Similar results appear, to some extent in greater generality in \cite{MR2472287,dirk}, but not in the weak form we consider here. \\

Additionally we combine the analysis of \cite{FHV13} with the sparse controllability results in \cite{CFPT12} and show that a high-dimensional dynamical systems of Cucker--Smale type can be {\it nearly optimally} stabilized towards consensus by means of a control strategy completely identified by the {\it optimal} control strategy in low-dimension with high probability. 
More formally we consider for a  given $(x(0),v(0))$ the high-dimensional controlled system
\begin{align*}
\begin{cases} 
\dot{x_i}(t)=v_i(t) \\
\dot{v_i}(t)=\frac{1}{N} \sum\limits_{j=1}^N a\left(\left\Vert x_j(t)-x_i(t)\right\Vert\right) \left(v_j(t)-v_i(t)\right)+{u_i^h(t)}
\end{cases}
\end{align*}
and its low-dimensional system counterpart with initial data $(y(0),w(0))=(Mx(0),Mv(0))$,
\begin{align*}
\begin{cases} 
\dot{y_i}(t)=y_i(t) \\
\dot{w_i}(t)=\frac{1}{N} \sum\limits_{j=1}^N a\left(\left\Vert y_j(t)-y_i(t)\right\Vert\right) \left(w_j(t)-w_i(t)\right)+{u_i^{\ell}(t)}.
\end{cases}
\end{align*}
The {\it sparse} control strategies applied to the systems are defined as follows: fix $\theta>0$ and define for $w_{i}^{\perp}= w_i - \overline{w}$ as well as $v_i^\perp= v_i - \overline{v}$
\begin{align*}
u_i^{\ell}&=
\begin{cases} 
-\theta\frac{w_{\maxindex}^{\perp}}{\|w_{\maxindex}^{\perp}\|} &\text{ if } i=\maxindex \text{ is the smallest index such that } \|w_{\maxindex}^{\perp}\|=  \max\limits_{j=1,\ldots,N} \|w_j^{\perp}\|, \\
0 &\text{ otherwise, }
\end{cases} \\
u_i^{h}&=
\begin{cases} 
-\theta\frac{v_{\maxindex}^{\perp}}{\|v_{\maxindex}^{\perp}\|} &\text{ if } i=\maxindex, \\
0 &\text{ otherwise. }
\end{cases}
\end{align*}
Notice that the control $u^{h}$ is {\it sparse} (all the components are zero except one) and defined exclusively through the following information: the index $\maxindex$ which is computed from the low-dimensional control problem, the consensus parameter $v_{\maxindex}$, which is actually the only information to be observed in high-dimension, and the mean consensus parameter $\overline{v}(t) = \overline{v}(0) + \frac{1}{N} \sum_{i=1}^N\int_{0}^t u_i^h(s) ds$, which one does compute by integration and sums of previous controls.
Our main result reads as follows.
\begin{Theorem}
Let $M \in \R^{k \times d}$ and $\Theta>0$. Assume that $(x,v)$ and $(y,w)$ are solutions of the $d$-dimensional and $k$-dimensional so controlled Cucker--Smale systems with  initial values $(x(0),v(0))$ and $(Mx(0),Mv(0))$, respectively.
Further assume that $M$ is a Johnson--Lindenstrauss matrix for a certain distortion $\eps>0$ and low dimension $k$, which depends exponentially on the number of agents, but not on the dimension $d$.
Then both controlled Cucker--Smale systems 
\begin{itemize}
\item[(a)] stay close to each other after the projection of the high-dimensional trajectories;
\item[(b)] reach the consensus region of Theorem \ref{hhktm}  in finite time, and
\item[(c)] reach the  consensus region, when a certain parameter of the low-dimensional systems falls below a known threshold.
\end{itemize}
\end{Theorem}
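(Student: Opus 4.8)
The plan is to assemble the statement from three ingredients already available: the continuous Johnson–Lindenstrauss estimate of Lemma~\ref{le:JLL_cont}, a Grönwall comparison extending the finite-time bound \eqref{fintim} to the presence of the sparse feedback, and the velocity-variance decay underlying the sparse controllability of \cite{CFPT12}. Throughout I write $V(t)=\overline{v^2}(t)-\overline{v}^2(t)$ and $W(t)=\overline{w^2}(t)-\overline{w}^2(t)$ for the velocity variances of the high- and low-dimensional systems, and $X(t),Y(t)$ for the corresponding position variances. The key algebraic facts I will use repeatedly are $V=\frac1N\sum_i\|v_i^{\perp}\|^2$, so that $\max_i\|v_i^{\perp}\|\ge\sqrt V$, together with the polarization identity $V=\frac{1}{2N^2}\sum_{i,j}\|v_i-v_j\|^2$, which rewrites the variance purely in terms of pairwise distances (and likewise in low dimension).

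For part~(a) I would set $e(t)=\|y(t)-Mx(t)\|+\|w(t)-Mv(t)\|$ and differentiate. The interaction terms are treated exactly as in the uncontrolled estimate behind \eqref{fintim}: applying Lemma~\ref{le:JLL_cont} to the finitely many trajectories $t\mapsto x_j(t)-x_i(t)$ and invoking the local Lipschitz continuity of $a$ bounds their contribution by $C(\eps+e(t))$. The genuinely new term is the control mismatch, which — crucially because the active index $\maxindex$ is \emph{the same} in both systems, being read off from the low-dimensional state — equals $\|u_{\maxindex}^{\ell}-Mu_{\maxindex}^{h}\|=\theta\left\|\frac{w_{\maxindex}^{\perp}}{\|w_{\maxindex}^{\perp}\|}-\frac{Mv_{\maxindex}^{\perp}}{\|v_{\maxindex}^{\perp}\|}\right\|$. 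Since Lemma~\ref{le:JLL_cont} gives $\|Mv_{\maxindex}^{\perp}-w_{\maxindex}^{\perp}\|\lesssim\eps+e(t)$, the normalized directions differ by at most a multiple of $(\eps+e(t))/\min(\|v_{\maxindex}^{\perp}\|,\|w_{\maxindex}^{\perp}\|)$; on the window where the controls are active this denominator is bounded below by the consensus threshold, so the mismatch is again $\lesssim\eps+e(t)$, and Grönwall yields $e(t)\lesssim\eps\,e^{Ct}$ on $[0,T]$, which is~(a).

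For part~(b) I would run the variance-decay argument of \cite{CFPT12} in each dimension separately, since it is dimension-free. A direct computation of $\frac{d}{dt}V$ shows the mean-velocity cross terms produced by the control cancel, the alignment term is nonpositive, and the feedback on the maximal-deviation agent contributes $-\frac{2\theta}{N}\max_i\|v_i^{\perp}\|$, so $\frac{d}{dt}V\le-\frac{2\theta}{N}\sqrt V$, i.e. $\frac{d}{dt}\sqrt V\le-\theta/N$; meanwhile $\frac{d}{dt}\sqrt X\le\sqrt V$ by Cauchy–Schwarz. Hence $\sqrt V$ decays at least linearly while $X$ grows only boundedly, so the pair $(X,V)$ crosses the boundary $\int_{\sqrt X}^{\infty}a(\sqrt{2N}r)\,dr=\sqrt V$ of Theorem~\ref{hhktm} in finite time, and the identical estimate applied to $(Y,W)$ settles~(b). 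Part~(c) is the genuine transfer step. The polarization identity together with the Johnson–Lindenstrauss distortion gives $W(t)=(1\pm\eps)^2 V(t)$ up to the trajectory error $e(t)$ of~(a), and likewise $Y(t)\approx X(t)$; consequently there is an explicitly computable threshold $\eta=\eta(\eps,N,T,\theta)$ so that the observed inequality $W(t_\star)\le\eta$ forces $(X(t_\star),V(t_\star))$ to lie \emph{strictly} inside the consensus region of Theorem~\ref{hhktm}. Choosing $\eta$ below the region boundary by a margin absorbing the $\eps\,e^{CT}$ error makes the implication rigorous, and switching the high-dimensional control off at $t_\star$ lets Theorem~\ref{hhktm} drive the system to consensus, proving~(c).

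The main obstacle is the control-mismatch term in~(a): the normalization $1/\|\cdot\|$ is neither Lipschitz nor continuous where the maximizing index $\maxindex$ switches, so one must show the analysis only ever uses feedback values bounded away from the singularity $\|v_{\maxindex}^{\perp}\|=0$ — equivalently, that the consensus region is reached (and the control deactivated) before $\|v_{\maxindex}^{\perp}\|$ shrinks to the error scale. Controlling this interplay between the switching structure of the sparse feedback and the accumulating $\eps\,e^{Ct}$ approximation error — and in particular guaranteeing that the low-dimensional argmax $\maxindex$ remains a near-optimal choice for the high-dimensional system, which reduces to a uniform-in-$i$ application of Lemma~\ref{le:JLL_cont} to the vectors $v_i^{\perp}$ — is where the quantitative work will concentrate.
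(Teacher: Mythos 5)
There is a genuine gap, and it sits at the heart of the result. In your part (b) you run ``the variance-decay argument in each dimension separately'' and credit the high-dimensional feedback with the contribution $-\frac{2\theta}{N}\max_i\|v_i^{\perp}\|$, concluding $\frac{d}{dt}V\le-\frac{2\theta}{N}\sqrt V$. But the high-dimensional control acts on the index $\maxindex$ that maximizes $\|w_i^{\perp}\|$ in the \emph{low-dimensional} system; in general this is not the high-dimensional maximizer, so you may not replace $\|v_{\maxindex}^{\perp}\|$ by $\max_i\|v_i^{\perp}\|\ge\sqrt V$. This argmax transfer is exactly what the paper has to prove, and it is done by Lemma \ref{le:CSCS_technical}: under the \emph{weak} Johnson--Lindenstrauss property at the sampled deviations $v_i^{\perp}(n\tau)$, together with the error bound $2\cal{E}^{v}(n\tau)\le\Delta$ from Proposition \ref{prop:diff_eq_est}, one only gets $\|v^{\perp}_{\maxindex}(n\tau)\|\ge c\sqrt{V(n\tau)}$ with the degraded constant $c=1/\sqrt{289}$, whence the \emph{near-optimal} rate $V'\le-\frac{c\theta}{N}\sqrt V$ (this constant is also why $\overline X$ in \eqref{eq:Xbar} carries a $1/c^2$). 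Consequently (a) and (b) cannot be decoupled as you propose: the paper proves the error estimate and the decay statements $P_1$--$P_3$ of Theorem \ref{eq:convergence} by a joint induction over the sampling intervals $[n\tau,(n+1)\tau]$, where the inductive error bound keeps $\|v^{\perp}_{\maxindex}(n\tau)\|\ge\Delta/2$ before the switch-off time — this, via the threshold $\Gamma=(2\Delta)^2$ and the piecewise-constant sampled controls of Definition \ref{def:Sampling_Control} (which you acknowledge only as a ``chattering'' obstacle but never introduce), is what tames the normalization singularity you flag. Your suggested fix, a ``uniform-in-$i$ application of Lemma \ref{le:JLL_cont} to the vectors $v_i^{\perp}$'', does not work: the \emph{strong} JL property provably fails along trajectories approaching consensus (that failure is the raison d'\^etre of the weak property with floor $\delta$), and comparing maximizers under the weak dichotomy plus an additive error $\Delta$ is precisely the content of Lemma \ref{le:CSCS_technical}. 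The same weak/strong confusion undermines your part (c): the clean distortion $W=(1\pm\eps)^2V$ would need strong JL at all velocity differences, which is unavailable near consensus; the paper instead derives $\sqrt{W(n^*\tau)}\le2\Delta\Rightarrow\sqrt{V(n^*\tau)}\le C\Delta$ with $C=\sqrt{72}$, and calibrates $\Delta=\min\bigl(\gamma(\overline X)/C,\;\tfrac12\gamma(4\overline X)\bigr)$ against the a priori bounds $X\le\overline X$, $Y\le\overline Y$ from Lemma \ref{le:tech_up} so that both systems land in the region of Theorem \ref{hhktm}.

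A second, independent gap is circularity. You apply Lemma \ref{le:JLL_cont} directly to ``the finitely many trajectories $t\mapsto x_j(t)-x_i(t)$'', but in the controlled setting these trajectories depend on $M$ itself (the feedback index is computed from the $M$-projected system), whereas a Johnson--Lindenstrauss matrix must be drawn for points fixed \emph{before} the random draw. The paper resolves this in Section \ref{sec:JLL_matrix}: with sampled controls there are at most $N^{\lfloor \hat T/\tau\rfloor+1}$ realizable switching paths, so one fixes a priori the sampled points on \emph{all} branches and draws $M$ for that deterministic cloud — and this branching count is exactly the source of the exponential dependence of $k$ on $N$ announced in the theorem statement. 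Your argument produces nothing of the kind (it would give $k$ logarithmic in $N$), which should have signaled that the self-reference between $M$ and the controlled trajectories had not been addressed.
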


We consciously do not wish to be more detailed at this point than this rather general and perhaps rough explanation because the precise statements appear in the rest of the paper in a rather technical form and we wish here, in the introduction, mainly to convey their fundamental message.  Let us stress again that in our view the content of this paper is of technical nature towards a proof of concept and we expect our main results actually to extend similarly to other high-dimensional dynamical systems whose nonlinearities depend smoothly on mutual Euclidean distances. We refer to \cite{FHV13} for more examples of relevant dynamical systems of this type. While in this paper we consider the sparse controllability of alignment systems for $d \rightarrow \infty$, we mention also the related investigations towards a sparse mean-field optimal control for $N \rightarrow \infty$ in \cite{FPR14,FS14}.\\

The paper is organized as follows: Section \ref{sec:enteringCucker} presents the Cucker--Smale model and some of its main features. Section \ref{sec:enteringJohnson} deals with Johnson--Lindenstrauss embeddings, which shall be used extensively to obtain  low-dimensional counterparts of  Cucker--Smale models retaining all the information about the asymptotic behavior of the system for large times. Section \ref{sec:withoutControl} studies the interplay between a high-dimensional Cucker--Smale model and the low-dimensional system obtained via Johnson--Lindenstrauss embeddings: in particular, in Theorem \ref{th:Reduction_uncontrolled} we derive an error estimate for the approximation of the projected high-dimensional system by the low-dimensional one. Section \ref{sec:withControl} introduces the sparse control strategy we shall exploit to enforce alignment in the high-dimensional system using only information gathered from the low-dimensional system and presents  Theorem \ref{eq:convergence}, the main result of this paper. In Section \ref{sec:JLL_matrix} we discuss about the appropriate size of the dimension onto which we should project a given high-dimensional system and the construction of suitable Johnson--Lindenstrauss embeddings fulfilling the conditions stated in the main result. Finally, Section \ref{sec:numerics} shows a series of numerical experiments and compares the sparse control strategy to several other possible stabilization procedures.

\noindent

\noindent

\section{The Cucker--Smale model} \label{sec:enteringCucker}

In the following, we shall work in the ambient space $\R^d$ equipped with the $\ell^d_2$-Euclidean norm $\Vert \cdot \Vert_{\ell^d_2}$, omitting the subscript if the dimensionality of the norm can be retrieved from the context. Consider a system of $N$ agents, whose state is described by a pair $(x_i, v_i)$ of vectors of $\R^d$, where $x_i$ represents the \emph{main state} of the agent and $v_i$ its \emph{consensus parameter}. The \emph{alignment} model as presented in \cite{HHK10} assumes that the dynamics of the $i$-th agent of the group evolves according to  the following system of ordinary differential equations
\begin{align} \label{eq:freesystemhi}
\left \{\begin{array}{lll}
\dot{x}_{i}(t) & = & v_{i}(t)\\
\dot{v}_{i}(t) & = & \frac{1}{N}\underset{j=1}{\overset{N}{\sum}}a\left(\left\Vert x_{i}(t)-x_{j}(t)\right\Vert\right)\left(v_{j}(t)-v_{i}(t)\right)
\end{array}
\right .
\end{align}
for every $i=1,\ldots,N$, where $a$ is a \emph{non-increasing positive Lipschitz function} on $[0,\infty)$. In this model, at any time every agent adjusts its consensus parameter to match those of the other agents according to a weighted average of the differences: how much the $i$-th agent will align with the $j$-th agent depends on the Euclidean distance, meaning that the $i$-th agent is more influenced by those which are near to him than to those which are far away from him.

As a prominent example, the Cucker--Smale models considered in the seminal paper \cite{CS07} are governed by a function $a$ of the form
\begin{align}\label{csmod}
a(r) = \frac{K}{(\sigma^2 + r^2)^{\beta}},
\end{align}
where the parameters $K > 0$, $\sigma > 0$, and $\beta \geq 0$ tune the social interaction in the group of agents.

\begin{Definition}
We say that a solution $(x(t), v(t))$ of system \eqref{eq:freesystemhi} \emph{tends to consensus} if the consensus para\-meter vectors tend to the mean $\overline{v} = \frac{1}{N}\sum^N_{i = 1} v_i$, namely if 
\begin{align*}
\lim_{t \rightarrow + \infty}  \left\|v_i(t) - \overline{v}(t)\right\|_{\ell^d_2}=\lim_{t \rightarrow + \infty} \left\|v_i^{\perp}(t)\right\|_{\ell^d_2}= 0
\end{align*}
for every $i = 1, \ldots, N$. Notice that $\overline{v}(t) = \overline{v}(0)$ is a conserved quantity for a system of the type \eqref{eq:freesystemhi}, but later we shall consider below controlled systems for which $\overline{v}(t)$ is eventually time dependent.
\end{Definition}
Given a solution $(x(t), v(t))$ of system \eqref{eq:freesystemhi}, we reformulate the convergence to consensus by means of the following quantities
\begin{align}
\label{Def:XY}
X(t):= B(x(t), x(t))= \frac{1}{2N^2} \sum^N_{i,j = 1} \Vert x_i(t)-x_j(t) \Vert^2, \qquad V(t) := B(v(t), v(t)) = \frac{1}{N} \sum^N_{i = 1} \Vert v_i^{\perp}(t) \Vert^2,
\end{align}
where for $u=(u_1,\ldots,u_N), \tilde{u}=(\tilde{u}_1,\ldots,\tilde{u}_N) \in (\R^d)^N$
\begin{align*}
B(u,\tilde{u}) = \frac{1}{2N^2} \sum^N_{i,j = 1} \left\langle u_i - u_j, \tilde{u}_i - \tilde{u}_j \right\rangle
\end{align*}
is a bilinear form on  the space $(\R^d)^N$, and  $\langle \cdot, \cdot \rangle$ denotes the usual scalar product on $\R^d$. 

If we denote with
\begin{align*}
\mathcal{V} & = \left\{ v \in (\R^d)^N \mid v_1 = \ldots = v_N \in \R^d \right\}, \\
\mathcal{V}^{\perp} & = \left\{ v \in (\R^d)^N \mid \sum^N_{i = 1}v_i = 0 \right\},
\end{align*}
then $(\R^d)^N = \mathcal{V} \oplus \mathcal{V}^{\perp}$ with respect to the scalar product $B$, hence every $v \in (\R^d)^N$ can be written uniquely as $v = v_0 + v^{\perp}$ where $v_0 \in \mathcal{V}$ and $v^{\perp} \in \mathcal{V}^{\perp}$.

\begin{Proposition}
For a solution $(x(t), v(t))$ of system \eqref{eq:freesystemhi} the following are equivalent:
\begin{enumerate}
\item $\lim_{t \rightarrow + \infty}  \left\|v_i(t) - \overline{v}(t)\right\|_{\ell^d_2} = 0$ for every $i = 1, \ldots, N$;
\item $\lim_{t \rightarrow + \infty} v^{\perp}_i(t) = 0$ for every $i = 1, \ldots, N$;
\item $\lim_{t \rightarrow + \infty} V(t) = 0$.
\end{enumerate}
\end{Proposition}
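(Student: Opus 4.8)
The plan is to exploit the fact that the three conditions are essentially restatements of one another, so that the proof reduces to two elementary observations together with the explicit identity for $V(t)$ recorded in \eqref{Def:XY}.

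First I would dispense with the equivalence of (1) and (2). By the orthogonal decomposition $(\R^d)^N = \mathcal{V} \oplus \mathcal{V}^{\perp}$ with respect to $B$, the component of $v$ lying in $\mathcal{V}$ is the constant vector $(\overline{v},\ldots,\overline{v})$, whence $v_i^{\perp} = v_i - \overline{v}$ for every $i$. Consequently $\left\|v_i(t) - \overline{v}(t)\right\|_{\ell_2^d} = \left\|v_i^{\perp}(t)\right\|_{\ell_2^d}$ for each index, and since a vector tends to $0$ if and only if its norm tends to $0$, statements (1) and (2) are literally the same assertion.

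Next I would address the equivalence of (2) and (3) via the identity $V(t) = \frac{1}{N}\sum_{i=1}^N \left\|v_i^{\perp}(t)\right\|^2$, which is precisely the second form of $V(t)$ in \eqref{Def:XY} (and which one verifies in one line by expanding $B(v,v)$ and using $\sum_{i=1}^N v_i^{\perp} = 0$). The implication (2) $\Rightarrow$ (3) is then immediate: if $\left\|v_i^{\perp}(t)\right\| \to 0$ for each of the finitely many indices $i$, then so does their average, i.e.\ $V(t) \to 0$. For the converse (3) $\Rightarrow$ (2) I would use nonnegativity of the summands: for any fixed index $j$ one has $0 \leq \left\|v_j^{\perp}(t)\right\|^2 \leq \sum_{i=1}^N \left\|v_i^{\perp}(t)\right\|^2 = N\,V(t)$, so $V(t) \to 0$ forces $\left\|v_j^{\perp}(t)\right\| \to 0$, hence $v_j^{\perp}(t) \to 0$.

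I do not expect any substantial obstacle here: the only genuine computation is the identity $V(t) = \frac{1}{N}\sum_{i=1}^N \left\|v_i^{\perp}(t)\right\|^2$, which is already part of the definition \eqref{Def:XY}, and the remaining content is the elementary observation that a finite sum of nonnegative reals converges to zero exactly when each of its terms does.
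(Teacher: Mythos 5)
Your proposal is correct, and it takes the same route the paper intends: the authors state this Proposition without any proof, treating it as immediate from the definitions, and your argument supplies exactly the omitted verification --- the identification $v_i^{\perp}(t) = v_i(t) - \overline{v}(t)$ coming from the decomposition $(\R^d)^N = \mathcal{V} \oplus \mathcal{V}^{\perp}$ (which makes (1) and (2) the same statement), and the identity $V(t) = \frac{1}{N}\sum_{i=1}^N \bigl\| v_i^{\perp}(t) \bigr\|^2$ already recorded in \eqref{Def:XY}, combined with the fact that a finite sum of nonnegative terms tends to zero if and only if each term does. There is no gap; nothing further is needed.
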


A sufficient condition for a solution of system \eqref{eq:freesystemhi} to converge to consensus can be given using the following functional
\begin{align*}
\gamma(X_0):=\int^{+\infty}_{\sqrt{X_0}} a(\sqrt{2N} r) \ dr.
\end{align*}

\begin{Lemma}[\cite{HHK10}, Corollary 3.1]
\label{le:HaHaKim}
Let $(x(t), v(t))$ be a solution of system \eqref{eq:freesystemhi}. Then $X(t)$ and $V(t)$ satisfy
\begin{align*}
\frac{d}{dt} V(t) \leq -2a\left(\sqrt{2NX(t)}\right) V(t).
\end{align*}
In particular, if the initial datum $(x(0),v(0))=(x_0, v_0) \in (\R^d)^N \times (\R^d)^N$ is such that the quantities $X_0=X(0)$ and $V_0=V(0)$ are fulfilling
\begin{align*}
\gamma(X_0) \geq \sqrt{V_0},
\end{align*}
then the solution of \eqref{eq:freesystemhi} with initial data $(x_0, v_0)$ tends to consensus.
\end{Lemma}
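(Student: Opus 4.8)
The plan is to establish the two assertions separately: first the pointwise differential inequality for $V$, then convergence to consensus under $\gamma(X_0)\ge\sqrt{V_0}$. For the inequality I would start from the variance representation $V(t)=\frac1N\sum_{i=1}^N\|v_i^{\perp}\|^2$ and differentiate. Because $\overline v$ is conserved for the uncontrolled system and $\sum_i v_i^{\perp}=0$, one obtains $\frac{d}{dt}V=\frac2N\sum_{i=1}^N\langle v_i^{\perp},\dot v_i\rangle$. Substituting $\dot v_i=\frac1N\sum_j a(\|x_i-x_j\|)(v_j-v_i)$ and writing $v_j-v_i=v_j^{\perp}-v_i^{\perp}$, the resulting double sum can be symmetrized in $i,j$ (using $a(\|x_i-x_j\|)=a(\|x_j-x_i\|)$) so that the cross terms assemble into a full square, giving the clean identity $\frac{d}{dt}V=-\frac1{N^2}\sum_{i,j}a(\|x_i-x_j\|)\,\|v_i-v_j\|^2$. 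This symmetrization step is the part that demands the most bookkeeping.

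The remaining ingredient is the diameter bound $\|x_i-x_j\|\le\sqrt{2NX(t)}$ for every pair, which I would derive from $\|x_i-x_j\|^2=\|x_i^{\perp}-x_j^{\perp}\|^2\le 2\big(\|x_i^{\perp}\|^2+\|x_j^{\perp}\|^2\big)\le 2\sum_k\|x_k^{\perp}\|^2=2NX(t)$. Since $a$ is positive and non-increasing, this yields $a(\|x_i-x_j\|)\ge a(\sqrt{2NX})$, so that $\frac{d}{dt}V\le -\frac{a(\sqrt{2NX})}{N^2}\sum_{i,j}\|v_i-v_j\|^2=-2a(\sqrt{2NX})\,V$, using $\sum_{i,j}\|v_i-v_j\|^2=2N^2V$. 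This proves the first claim.

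For convergence I would pass to square roots. Cauchy--Schwarz for the inner product $B$ gives $|\frac{d}{dt}X|=2|B(x,v)|\le 2\sqrt X\sqrt V$, hence $|\frac{d}{dt}\sqrt X|\le\sqrt V$, while dividing the inequality above by $2\sqrt V$ gives $\frac{d}{dt}\sqrt V\le -a(\sqrt{2N}\sqrt X)\sqrt V$ wherever $V>0$ (if $V$ ever vanishes, consensus is already reached, since $V$ is non-increasing). Consider then the functional $\mathcal E(t):=\sqrt{V(t)}+\int_{\sqrt{X_0}}^{\sqrt{X(t)}}a(\sqrt{2N}r)\,dr$. Because $a>0$ and $\frac{d}{dt}\sqrt X\le\sqrt V$, its derivative satisfies $\frac{d}{dt}\mathcal E\le -a(\sqrt{2N}\sqrt X)\sqrt V+a(\sqrt{2N}\sqrt X)\sqrt V=0$, so $\mathcal E(t)\le\mathcal E(0)=\sqrt{V_0}$, which rearranges to $\int_{\sqrt{X_0}}^{\sqrt{X(t)}}a(\sqrt{2N}r)\,dr\le\sqrt{V_0}-\sqrt{V(t)}$.

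Finally I would close by contradiction. As $V$ is non-increasing and nonnegative, $V(t)\downarrow V_\infty\ge0$; assuming $V_\infty>0$, the last bound gives $\int_{\sqrt{X_0}}^{\sqrt{X(t)}}a(\sqrt{2N}r)\,dr\le\sqrt{V_0}-\sqrt{V_\infty}<\sqrt{V_0}\le\gamma(X_0)$, so the upper limit $\sqrt{X(t)}$ stays below some finite $X_M$. Then $a(\sqrt{2N}\sqrt X)\ge a(\sqrt{2N}X_M)=:\delta>0$ forces $\frac{d}{dt}\sqrt V\le-\delta\sqrt V$ and hence $V(t)\to0$, contradicting $V_\infty>0$; therefore $V(t)\to0$, which by the Proposition is precisely consensus. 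I expect the main obstacles to be the symmetrization producing the quadratic form, and the careful treatment of the boundary case $\gamma(X_0)=\sqrt{V_0}$ together with the differentiability of $\sqrt X$ and $\sqrt V$ at instants where $X$ or $V$ may momentarily vanish, which the contradiction argument is designed to sidestep.
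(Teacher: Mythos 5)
Your proof is correct: the symmetrization yielding $\frac{d}{dt}V=-\frac{1}{N^2}\sum_{i,j}a(\|x_i-x_j\|)\|v_i-v_j\|^2$, the diameter bound $\|x_i-x_j\|\leq\sqrt{2NX}$, and the Lyapunov functional $\sqrt{V(t)}+\int_{\sqrt{X_0}}^{\sqrt{X(t)}}a(\sqrt{2N}r)\,dr$ are exactly the standard argument of \cite{HHK10}, reproduced in the Appendix of \cite{CFPT14}, to which the paper defers (it cites the result without proving it). Your contradiction step correctly handles the boundary case $\gamma(X_0)=\sqrt{V_0}$ and the degeneracy of $\sqrt{V}$, so nothing is missing.
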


\begin{Remark}
A simple proof of this crucial observation can be found in the Appendix of \cite{CFPT14}. Notice that it follows immediately that $V$ is decreasing.
\end{Remark}

\begin{Definition}[Consensus region]
\label{def:consensus}
If $(x(t),v(t))$ fulfills the condition 
\begin{align*}
\gamma(X(t)) \geq \sqrt{V(t)},
\end{align*}
we say that the system is in the \emph{consensus region} at the time $t$.
\end{Definition}

\section{A continuous Johnson--Lindenstrauss Lemma} \label{sec:enteringJohnson}
As it will be made clear below, we indent to reduce the computational effort of extracting fundamental features of the dynamical system \eqref{eq:freesystemhi}, for instance about its asymptotic behavior, by projecting it to a $k$-dimensional space for $k \ll d$ by a linear mapping $M\in \R^{k\times d}$. In particular, we apply such a matrix $M$ to each equation of \eqref{eq:freesystemhi} and by setting $y_i = Mx_i$ as well as $w_i = Mv_i$ for $i = 1, \ldots, N$, we  obtain the system
\begin{align*}
\left \{\begin{array}{lll}
\dot{y}_{i}(t) & = & w_{i}(t)\\
\dot{w}_{i}(t) & = & \frac{1}{N}\underset{j=1}{\overset{N}{\sum}}a\left(\left\Vert y_{i}(t)-y_{j}(t)\right\Vert\right)\left(w_{j}(t)-w_{i}(t)\right),
\end{array}
\right .
\end{align*}
where we formally applied the equivalences 
\begin{align} \label{eq:approx_space}
\left\Vert x_{i}(t)-x_{j}(t)\right\Vert_{\ell^d_2} \equiv \left\Vert Mx_{i}(t)-Mx_{j}(t)\right\Vert_{\ell^k_2} \equiv \left\Vert y_{i}(t)-y_{j}(t)\right\Vert_{\ell^k_2}.
\end{align}
For \eqref{eq:approx_space} to hold, at least approximately, we need that $M$ is nearly an isometry (here we further refine and extend results from \cite[Section 3]{FHV13}).
\begin{Definition}
\label{def:JLL_property}
Let $M \in \R^{k \times d}$, $\delta>0$, and $\eps\in (0,1)$. Then we say, that $M$ is fulfilling the \textit{weak Johnson--Lindenstrauss property} of parameters $\eps$ and $\delta$ at $x \in \R^d$  if either
\begin{align} 
\label{eq:JLL_xquasi}
(1-\eps) \|x\| \leq \|Mx\| \leq (1+\eps) \|x\|
\end{align} 
or
\begin{align} 
\label{eq:JLL_xsmall}
\|x\| \leq \delta \text{ and } \|Mx\| \leq \delta.
\end{align}
We say that $M$ is fulfilling the \textit{(strong) Johnson--Lindenstrauss property} of parameter $\eps$ at $x \in \R^d$  if exclusively \eqref{eq:JLL_xquasi} holds at $x \in \R^d$.
\end{Definition} 
\begin{Remark} \label{rm:numberofpoints}
The earliest result providing the existence of matrices $M$ for which \eqref{eq:JLL_xquasi} holds for every $x \in \mathcal{P}$, $\mathcal{P} \subseteq \R^d$ such that $N=\# \mathcal{P}$ for the dimensionality $k$ scaling as
\begin{align}
\label{eq:scaling}
k \sim \eps^{-2} \log N
\end{align}
is the celebrated Johnson--Lindenstrauss Lemma from the seminal paper \cite{JL84}. We refer to \cite{dirk} for a rather general version of this result and to the references therein for an extended literature.

The only construction of a matrix $M$ fulfilling the (strong) Johnson--Lindenstrauss property with scaling \eqref{eq:scaling} known up to now is stochastic, i.e., the matrix is randomly generated and satisfies \eqref{eq:JLL_xquasi} with high probability. One of the remarkable features of these embeddings, which we exploit extensively in this paper, is that for their construction there is no need to know the specific points in advance: given a fixed cloud of points (not necessarily explicitely given!) a random matrix drawn according to certain distributions will fulfill the (strong) Johnson--Lindenstrauss property with high probability. Let us recall briefly some well-known instances of such distributions:
\begin{itemize}
\item[(S1)] $k \times d$ matrices $M$ whose entries $m_{ij}$ are independent realizations of Gaussian random variables, i.e.,
\begin{align*}
m_{ij} \sim \cal{N}\left(0,\frac{1}{k}\right);
\end{align*}
\item[(S2)] \label{Bern}$k \times d$ matrices $M$ whose entries $m_{ij}$ are independent realizations of scaled Bernoulli random variables, i.e.,
\begin{align*}
m_{ij} = \left\{
\begin{array}{lr}
+\frac{1}{\sqrt{k}}, & \text{ with probability } \frac{1}{2} \\
-\frac{1}{\sqrt{k}}, & \text{ with probability } \frac{1}{2} \\
\end{array}
\right. .
\end{align*}
It holds $1 \leq \|M\|_{\ell_2^d \rightarrow \ell_2^k} \leq \sqrt{d}$.

\item[(S3)] \label{DG99} $k \times d$ matrices $M$ which are random projections and are scaled by a factor $\sqrt{d/k}$, see \cite{DG03}. In particular, it holds $\|M\|_{\ell_2^d \rightarrow \ell_2^k}  = \sqrt{d/k}$.
\end{itemize}

\end{Remark}

\begin{Remark}
While the Johnson--Lindenstrauss Lemma is a result for a finite number of points, we need an analogous continuous result for projecting trajectories of dynamical systems. A result in this direction was given in \cite[Theorem 3.3]{FHV13}: Given $\eps>0$ and a ${\cal C}^1$-curve $\varphi:[0,1] \rightarrow \R^d$, if
\begin{align}
\label{eq:JLL_gamma}
 \rho:= \max_{t \in [0,1]} \frac{\|\varphi'(t)\|}{\|\varphi(t)\|} < \infty, 
\end{align}
then there exists a matrix $M \in \mathbb R^{k\times d}$ for $k \sim \eps^{-2} \log(d\cdot \rho \cdot \eps^{-1})$ such that
\begin{equation}\label{contJL}
(1-\eps) \|\varphi(t)\| \leq \|M \varphi(t)\| \leq (1+\eps) \|\varphi(t)\|,
\end{equation}  for all $t \in [0,1]$. As already announced at the beginning of this section, we would like to use \eqref{contJL} for
\begin{align*}
\varphi(t):=x_i(t)-x_j(t) \text{ or } \varphi(t):=v_i(t)-v_j(t)
\end{align*}
being $(x_i(t),v_i(t))$ the trajectory of the $i$-th agent in \eqref{eq:freesystemhi}. Unfortunately, \eqref{eq:JLL_gamma} does not hold in this case even if we assume that $\|x_i(0)-x_j(0)\|\geq c>0$ for all $i \not = j$: Let us consider, for instance, Example 1 from \cite{CFPT12} of a Cucker--Smale system of the type \eqref{eq:freesystemhi} with communication function \eqref{csmod} of two agents moving on the real line with positions and velocities at time $t$ given by $(x_1(t), v_1(t))$ and $(x_2(t), v_2(t))$. Let us assume that $\beta = 1$, $K = 2$ as well as $\sigma = 1$. We indicate by $x(t) = x_1(t) - x_2(t)$ the relative main state and by $v(t) = v_1(t) - v_2(t)$ the relative consensus parameter. The system can be reformulated in terms of relative variables
\begin{align*}
	  \left\{
		\begin{array}{l}
		\dot{x} = v, \\
		\dot{v} = - \frac{v}{1 + x^2}
		\end{array}
		\right.
\end{align*}
with initial conditions given by $x(0) = x_1(0) - x_2(0)$ and $v(0) = v_1(0) - v_2(0)$. Its solution can be characterized by integration by the following differential equation
\begin{align*}
x'(t)=v(t)=-\arctan(x(t)) + \arctan(x(0))+v(0).
\end{align*} 
Now, if $x(0)<0$ and $v(0)+\arctan(x(0))=:c(0)>0$, then $v(t)>c(0)$ as long as $x(t)<0$. Hence there has to be a $T>0$ with $x(T)=0$ and $v(T)=c(0)$. Thus $\eqref{eq:JLL_gamma}$ is violated for $\varphi(t) = x(t)$.

 Let us stress that \eqref{eq:JLL_gamma} is a necessary condition for \eqref{contJL} to hold (see \cite[Remark 1]{FHV13}). This motivates the relaxation of the strong Johnson--Lindenstrauss property to its weak version in Definition \ref{def:JLL_property}. Hence we prove a result based on the more general weak Johnson--Lindenstrauss property which will be sufficient for us in the following.
\end{Remark}

In the rest of the paper, given a Lipschitz function $\varphi: [a,b] \rightarrow \R^d$, we indicate with $L_{\varphi}(a,b)$ its Lipschitz constant on $[a,b]$, i.e.,
\begin{align*}
L_{\varphi}(a,b) := \sup_{\stackrel{t,s \in [a,b]}{t\neq s}} \frac{\| \varphi(t) - \varphi(s) \|}{|t - s|}.
\end{align*}

\begin{Lemma} 
\label{le:JLL_cont}
Let $\varphi:[0,1] \rightarrow \R^d$ be a Lipschitz function with Lipschitz constant $L_{\varphi}=L_{\varphi}(0,1)$,
let $\delta>0$, and $0<\eps< 1$. Let $k$ be such that a matrix $M \in \R^{k\times d}$ - stochastically generated as in (S2) or (S3) of Remark \ref{rm:numberofpoints}  - satisfies the (strong) Johnson--Lindenstrauss property of parameter $\tilde \eps=\eps/2$ at $\cal{N}$ arbitrary points with some (high) probability, where
\begin{align}
\label{eq:JLL_dN2}
\cal{N} \geq 4 \cdot \frac{L_{\varphi}  \cdot (\sqrt{d}+2)}{\delta \eps}.
\end{align}
Then the matrix $M$ fulfills the weak Johnson--Lindenstrauss property of parameters $\eps$ and $\delta$ at $\varphi(t)$ for every $t \in [0,1]$ with the same high probability, i.e.,
either 
\begin{align*} 
(1-\eps) \|\varphi(t)\| \leq \|M\varphi(t)\| \leq (1+\eps) \|\varphi(t)\|
\end{align*} 
or
\begin{align*} 
\|\varphi(t)\| \leq \delta \text{ and } \|M\varphi(t)\| \leq \delta
\end{align*}
holds for all $t \in [0,1]$.
\end{Lemma}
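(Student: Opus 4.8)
The plan is to transfer the strong Johnson--Lindenstrauss property from a finite sample of the curve to the whole continuum by a covering and interpolation argument, using the Lipschitz continuity of $\varphi$ together with an a priori bound on the operator norm $\|M\|$. First I would discretize: take the $\mathcal{N}$ points at which the strong property is invoked to be $\varphi(t_1),\dots,\varphi(t_{\mathcal{N}})$ with $t_i = i/\mathcal{N}$, so that every $t \in [0,1]$ lies within $1/\mathcal{N}$ of some grid point $t_i$. Since the stochastic constructions (S2)/(S3) of Remark \ref{rm:numberofpoints} yield \eqref{eq:JLL_xquasi} at \emph{any} fixed cloud of $\mathcal{N}$ points, with no need to know them in advance, and $\varphi$ is given, this cloud is admissible: with the stated high probability we have $(1-\eps/2)\|\varphi(t_i)\| \le \|M\varphi(t_i)\| \le (1+\eps/2)\|\varphi(t_i)\|$ simultaneously for all $i$. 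Everything that follows is a deterministic consequence of this single event, so the conclusion inherits exactly the same probability.

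Next comes the interpolation step. For arbitrary $t$ pick the nearest grid point $t_i$, so that $\eta := \|\varphi(t)-\varphi(t_i)\| \le L_\varphi |t-t_i| \le L_\varphi/\mathcal{N}$, and by the choice \eqref{eq:JLL_dN2} this forces $\eta \le \delta\eps/(4(\sqrt{d}+2))$. Combining the triangle inequality with the strong estimate at $t_i$, the operator-norm bound $\|M\| \le \sqrt{d}$ (valid both for (S2) and for (S3), where $\|M\| = \sqrt{d/k}$), and $\eps < 1$, I obtain the two-sided estimate
$$(1-\eps/2)\|\varphi(t)\| - (\sqrt{d}+2)\eta \le \|M\varphi(t)\| \le (1+\eps/2)\|\varphi(t)\| + (\sqrt{d}+2)\eta.$$

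Then I would split cases on $\|\varphi(t)\|$ relative to the threshold $\tau := 2(\sqrt{d}+2)\eta/\eps$, which by the previous bound satisfies $\tau \le \delta/2$. If $\|\varphi(t)\| \ge \tau$, the additive errors $(\sqrt{d}+2)\eta = (\eps/2)\tau$ are absorbed into $(\eps/2)\|\varphi(t)\|$, upgrading the two-sided estimate to $(1-\eps)\|\varphi(t)\| \le \|M\varphi(t)\| \le (1+\eps)\|\varphi(t)\|$, i.e.\ the quasi-isometry alternative \eqref{eq:JLL_xquasi}. If instead $\|\varphi(t)\| < \tau \le \delta$, then the upper estimate gives $\|M\varphi(t)\| \le (1+\eps/2)\tau + (\eps/2)\tau = (1+\eps)\tau \le 2\tau \le \delta$, so both $\|\varphi(t)\| \le \delta$ and $\|M\varphi(t)\| \le \delta$, i.e.\ the small-vector alternative \eqref{eq:JLL_xsmall}. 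In either case the weak property holds at $\varphi(t)$, and since $t \in [0,1]$ was arbitrary the proof concludes.

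I expect the conceptual heart, rather than the arithmetic, to be the main point: recognizing \emph{why} the weak relaxation is genuinely necessary and not merely convenient. The interpolation error between neighbouring samples is controlled only through $\|M\| \le \sqrt{d}$, which is enormous compared with the $O(1)$ distortion one is after. For vectors $\varphi(t)$ of size $\gtrsim \delta$ this error becomes harmless once $\mathcal{N}$ is chosen as in \eqref{eq:JLL_dN2}, but for genuinely small $\varphi(t)$ the $\sqrt{d}$-sized additive term can swamp the multiplicative $(1\pm\eps)$ window --- precisely the regime that the second alternative \eqref{eq:JLL_xsmall} is designed to absorb. Obtaining the exact constant in \eqref{eq:JLL_dN2} is then routine bookkeeping, tracking how the spacing $1/\mathcal{N}$, the bound $\|M\|\le\sqrt d$, and the factor-$2$ slack between $\tilde\eps=\eps/2$ and $\eps$ combine.
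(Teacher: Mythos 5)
Your proposal is correct, and it is essentially the paper's own argument: discretize at $\mathcal{N}$ equispaced parameters, transfer the strong property with $\tilde\eps = \eps/2$ from the grid to an arbitrary $t$ via the Lipschitz bound $\|\varphi(t)-\varphi(t_i)\| \leq L_{\varphi}/\mathcal{N}$ together with $\|M\| \leq \sqrt{d}$, and conclude by a two-case split into the quasi-isometry and small-vector alternatives, with your constants matching \eqref{eq:JLL_dN2} exactly (including the probabilistic reduction to a single finite-cloud event). The only immaterial differences are that you split cases on $\|\varphi(t)\|$ against the threshold $2(\sqrt{d}+2)\|\varphi(t)-\varphi(t_i)\|/\eps \leq \delta/2$ whereas the paper splits on $\|\varphi(t_j)\| \lessgtr \delta/2$ at the grid point, and that you absorb the additive error $(\sqrt{d}+2)\|\varphi(t)-\varphi(t_i)\|$ by a direct self-contained computation where the paper invokes the perturbation lemma \cite[Lemma 3.2]{FHV13}.
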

\begin{proof}
We shall adapt the arguments from the proof of \cite[Theorem 3.3]{FHV13}: Let $t_i:=i/{\cal N}$ for $i=0,\ldots,{\cal N}-1$ and assume that $M:\R^d \rightarrow \R^k$ fulfills the (strong) Johnson--Lindenstrauss property with parameter $\tilde{\eps}=\eps/2$ at the points $\{\varphi(t_i)\}_{i = 0}^{\cal{N}-1}$, i.e., we have
\begin{align*}
(1-\tilde{\eps}) \|\varphi(t_i)\| \leq \|M\varphi(t_i)\| \leq (1+\tilde{\eps}) \|\varphi(t_i)\|
\end{align*}
for all $i \in \{0,\ldots,{\cal N} - 1\}$. Furthermore, we may assume $1 \leq \|M\| \leq \sqrt{d}$, see (S2) and (S3) of Remark \ref{rm:numberofpoints}.

Let $t \in [0,1]$ and choose $j \in \{0,\ldots,\cal{N}-1\}$ such that $t \in [t_j,t_{j+1}]$. Let us at first assume
\begin{align*}
\|\varphi(t_j)\| \leq \delta/2.
\end{align*}
Since $\eps \in (0,1)$, by \eqref{eq:JLL_dN2} we have that
\begin{align*}
\cal{N} \geq 4 \cdot \frac{L_{\varphi}  \cdot \sqrt{d}}{\delta}.
\end{align*}
Using this latter inequality and the Lipschitz continuity of $\varphi$ we obtain
\begin{align*}
\|\varphi(t)\| &\leq \|\varphi(t)-\varphi(t_j)\| + \|\varphi(t_j)\| \\
&\leq L_{\varphi}/{\cal N} + \delta/2 \\
&\leq \delta
\end{align*}
and also
\begin{align*}
\|M\varphi(t)\| &\leq \|M\| \|\varphi(t)-\varphi(t_j)\| + \|M\varphi(t_j)\| \\
&\leq \sqrt{d} \cdot L_{\varphi}/{\cal N} + (1+\eps')\|\varphi(t_j)\| \\
&\leq \delta/4 + 3/2 \cdot \delta/2 \\
&\leq \delta.
\end{align*}
Let us now assume 
\begin{align*}
\|\varphi(t_j)\| >  \delta/2.
\end{align*}
Using again the Lipschitz continuity of $\varphi$ we obtain the estimate
\begin{align*}
\|\varphi(t)-\varphi(t_j)\| &\leq L_{\varphi}/{\cal N} \\
&\leq L_{\varphi}/{\cal N}  \cdot \frac{2\|\varphi(t_j)\|}{\delta} \\
&\leq  \frac{\delta \eps}{4(\sqrt{d}+2)}  \cdot \frac{2\|\varphi(t_j)\|}{\delta}  \\
&\leq \frac{\|\varphi(t_j)\| \cdot (\eps - \tilde{\eps})}{\|M\|+1+\eps},
\end{align*}
where in the last inequality we used that
\begin{align*}
\|M\|+1+\eps \leq \sqrt{d}+2.
\end{align*}
This estimate of the distance $\|\varphi(t)-\varphi(t_j)\|$ and the (strong) Johnson--Lindenstrauss property at $\varphi(t_j)$ enable us to extend the (strong) Johnson--Lindenstrauss property at $\varphi(t)$ as well, as a direct application of \cite[Lemma 3.2]{FHV13}, i.e., 
\begin{align*}
(1-\eps) \|\varphi(t)\| \leq \|M\varphi(t)\| \leq (1+\eps) \|\varphi(t)\|.
\end{align*}
Both cases together show the (weak) Johnson--Lindenstrauss property  at $\varphi(t)$ for every $t \in [0,1]$.
\end{proof}


We show in the following lemma that the mean-square norm and the relative order of the magnitudes of points in a cloud in high dimension are nearly preserved when projected in lower dimension by a weak Johnson--Lindenstrauss embedding.

\begin{Lemma}
\label{le:CSCS_technical}
Let $a_1, \ldots, a_N \in \R^d, b_1,\ldots,b_N \in \R^k$ and $M \in \R^{k \times d}$ such that there is  $\Delta> 0$ with the following properties:
\begin{itemize}
\item[(i)] The matrix $M$ fulfills the weak Johnson--Lindenstrauss property with $\eps=1/2$  and $\delta=\Delta$ for the points $a_i$, i.e.,
either 
\begin{align}
\label{eq:CSCS_JLL}
1/2 \cdot \|a_i\| \leq \|Ma_i\| \leq 3/2 \cdot  \|a_i\|,
\end{align} 
or
\begin{align}
\label{eq:CSCS_JLL_2}
\|a_i\| \leq \Delta \text{ and } \|Ma_i\| \leq \Delta,
\end{align}
for all $i \in 1,\ldots,N$.
\item[(ii)] We have the following approximation bound
\begin{align*}
\|Ma_i-b_i\| \leq \Delta,
\end{align*}
for all $i \in 1,\ldots,N$.
\end{itemize}
Let $\maxindex$ be the smallest index such that $\|b_{\maxindex}\| \geq \|b_j\|$ for all $j = 1, \ldots, N$ and let
\begin{align*}
A := \frac{1}{N} \sum^N_{j = 1} \|a_j \|^2 \text{ and } B := \frac{1}{N} \sum^N_{j = 1} \|b_j \|^2.
\end{align*}
If $\sqrt{B} \geq 2 \Delta$, then, for $c=1/\sqrt{289}$, it holds
\begin{align*}
 \|a_{\maxindex}\| \geq \|b_{\maxindex}\|/4, \quad \|a_{\maxindex}\| \geq c \sqrt{A}, \quad  \text{ and } \quad B \leq 16 NA.
\end{align*}
If $\sqrt{B} \leq 2 \Delta$, then, for $C=\sqrt{72}$, it holds
\begin{align*}
\sqrt{A} \leq C \Delta.
\end{align*}
\end{Lemma}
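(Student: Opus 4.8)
The plan is to run everything through two elementary, index-by-index consequences of the hypotheses. From (ii) one always has $\|b_i\| \le \|Ma_i\| + \Delta$ and $\|Ma_i\| \le \|b_i\| + \Delta$; combining these with the weak Johnson--Lindenstrauss alternative (i) gives, whenever the quasi-isometry branch \eqref{eq:CSCS_JLL} holds at $i$, the comparisons $\|a_i\| \le 2\|Ma_i\| \le 2(\|b_i\|+\Delta)$ and $\|b_i\| \le \tfrac32\|a_i\| + \Delta$, while in the small-norm branch \eqref{eq:CSCS_JLL_2} one simply has $\|a_i\| \le \Delta$ and $\|b_i\| \le 2\Delta$. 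A useful observation is that the crude bound $\|a_i\| \le 2(\|b_i\|+\Delta)$ in fact survives \emph{both} branches (in branch \eqref{eq:CSCS_JLL_2} it is trivial since $\|a_i\|\le\Delta\le 2(\|b_i\|+\Delta)$), so it can later be summed without a case distinction. I would also record the two facts coming for free from $\maxindex$ maximising $\|b_j\|$, namely $\sqrt{B} \le \|b_{\maxindex}\|$ and $\|b_{\maxindex}\|^2 \le \sum_j \|b_j\|^2 = NB$.

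For the regime $\sqrt{B} \ge 2\Delta$ I would first argue that the small-norm branch cannot occur at $\maxindex$: from $\|Ma_{\maxindex}\| \ge \|b_{\maxindex}\| - \Delta \ge \sqrt{B} - \Delta \ge \Delta$ the cap $\|Ma_{\maxindex}\| \le \Delta$ of \eqref{eq:CSCS_JLL_2} is violated (the boundary $\sqrt{B}=2\Delta$ being absorbed into the estimates), so \eqref{eq:CSCS_JLL} holds at $\maxindex$. Then $\|a_{\maxindex}\| \ge \tfrac23\|Ma_{\maxindex}\| \ge \tfrac23(\|b_{\maxindex}\|-\Delta)$, and since $\Delta \le \tfrac12\sqrt{B} \le \tfrac12\|b_{\maxindex}\|$ this yields $\|a_{\maxindex}\| \ge \tfrac13\|b_{\maxindex}\| \ge \tfrac14\|b_{\maxindex}\|$, the first claim, equivalently $\|b_{\maxindex}\| \le 3\|a_{\maxindex}\|$. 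The bound $B \le 16NA$ is then immediate: $B \le \|b_{\maxindex}\|^2 \le 9\|a_{\maxindex}\|^2 \le 9\sum_j\|a_j\|^2 = 9NA$. For $\|a_{\maxindex}\| \ge c\sqrt{A}$ I would bound every $\|a_j\|$ uniformly by $\|a_{\maxindex}\|$: in either branch $\|a_j\| \le 2(\|b_j\|+\Delta) \le 2(\|b_{\maxindex}\|+\Delta) \le 3\|b_{\maxindex}\| \le 9\|a_{\maxindex}\|$, whence $A \le 81\|a_{\maxindex}\|^2$ and $\|a_{\maxindex}\| \ge \tfrac19\sqrt{A} \ge c\sqrt{A}$; the stated $c=1/\sqrt{289}$ is a safe, non-optimised constant.

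In the complementary regime $\sqrt{B} \le 2\Delta$ there is no maximiser argument to make, and one simply averages the uniform per-index bound. Squaring $\|a_j\| \le 2(\|b_j\|+\Delta)$ and using $(\|b_j\|+\Delta)^2 \le 2\|b_j\|^2 + 2\Delta^2$ gives $\|a_j\|^2 \le 8\|b_j\|^2 + 8\Delta^2$; summing over $j$ and dividing by $N$ yields $A \le 8B + 8\Delta^2 \le 8\cdot 4\Delta^2 + 8\Delta^2 = 40\Delta^2$, so that $\sqrt{A} \le \sqrt{40}\,\Delta \le \sqrt{72}\,\Delta = C\Delta$, again with room to spare.

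The only genuinely delicate point is the bookkeeping of the weak Johnson--Lindenstrauss dichotomy: one must exclude the small-norm branch \eqref{eq:CSCS_JLL_2} exactly where two-sided control of $\|a_{\maxindex}\|$ is needed (the maximiser in the first regime), while checking that the coarse bound $\|a_j\| \le 2(\|b_j\|+\Delta)$ holds across both branches so it can be summed without case distinction. Everything else is triangle-inequality arithmetic, and the constants $1/\sqrt{289}$ and $\sqrt{72}$ are deliberately loose precisely to keep the estimates uniform over the two branches and over the two regimes.
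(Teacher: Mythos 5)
Your proof is correct and follows essentially the same route as the paper's: exclude the small-norm branch \eqref{eq:CSCS_JLL_2} at the maximiser via $\|Ma_{\maxindex}\| \geq \|b_{\maxindex}\| - \Delta \geq \Delta$, then run triangle-inequality comparisons over the two Johnson--Lindenstrauss branches separately in the two regimes $\sqrt{B} \geq 2\Delta$ and $\sqrt{B} \leq 2\Delta$. Your only deviation is a mild streamlining: the branch-uniform bound $\|a_j\| \leq 2\left(\|b_j\| + \Delta\right)$ replaces the paper's explicit split into the index sets $\{j : \|b_j\| \geq 2\Delta\}$ and $\{j : \|b_j\| < 2\Delta\}$, which is also why you end up with the slightly sharper constants $1/9$ and $\sqrt{40}$ where the paper settles for $1/\sqrt{289}$ and $\sqrt{72}$.
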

\begin{proof} First suppose that $\sqrt{B} \geq 2 \Delta$: since $\|b_{\maxindex}\|$ is maximal, we have $\|b_{\maxindex}\|\geq \sqrt{B} \geq 2\Delta$. By (ii) it holds $\|Ma_{\maxindex}\| \geq \|b_{\maxindex}\|-\Delta \geq 2 \Delta - \Delta \geq \Delta$ and hence using \eqref{eq:CSCS_JLL} we get
\begin{align*}
\|a_{\maxindex}\| \geq  \|Ma_{\maxindex}\|/2 \geq (\|b_{\maxindex}\|-\Delta)/2 \geq \|b_{\maxindex}\|/4.
\end{align*}
This shows the first estimate of the first part of the lemma. Let us address the second estimate. Let $j \in \{1,\ldots,N\}$ for $j \neq {\maxindex}$. If 
$\|b_j\| \geq 2 \Delta$, then, using the same argument as above, we have $\|Ma_j\|\geq \Delta$ and thus by \eqref{eq:CSCS_JLL}  we get
\begin{align}
\label{eq:CSCS_est_1}\
\|a_j\| \leq 2\|Ma_j\| \leq 2(\|b_j\|+\Delta)\leq 2 \cdot 3/2 \cdot \|b_j\| = 3\|b_j\|.
\end{align}
On the other hand, if $\|b_j\| <2 \Delta$, then $\|Ma_j\| \leq 3 \Delta$. Then either \eqref{eq:CSCS_JLL} holds and we have
\begin{align}
\label{eq:CSCS_est_2}
\|a_j\|\leq 2 \|Ma_j\| \leq 6 \Delta,
\end{align}
or \eqref{eq:CSCS_JLL_2} holds and automatically  $\|a_j\|\leq \Delta$. Now we can estimate the {mean-square norm} $A$. We obtain
\begin{align*}
NA= \sum^N_{j = 1} \|a_j \|^2 = \|a_{\maxindex}\|^2 + \sum_{j \in A_1} \|a_j\|^2+\sum_{j \in A_2} \|a_j\|^2,
\end{align*}
where $A_1$ is the index set of all $j \in \{1,\ldots,N\}\setminus\{\maxindex\}$ such that $\|b_j\| \geq 2 \Delta$ and  $A_2$ is the index set of all $j \in \{1,\ldots,N\}$ for which $\|b_j\| < 2 \Delta$. Using \eqref{eq:CSCS_est_1} and \eqref{eq:CSCS_est_2} we {obtain}
\begin{align*}
NA & \leq \|a_{\maxindex}\|^2 + {9}  \sum_{j \in A_1} \|b_j\|^2 + |A_2| \cdot 36 \Delta^2 \\
  & \leq \|a_{\maxindex}\|^2 + {9}  NB + 9 N \|b_{\maxindex}\|^2 \\
  & \leq \|a_{\maxindex}\|^2 + N  \|a_{\maxindex}\|^2 (9\cdot 16 + 9\cdot 16) \\
  & \leq 289 \|a_{\maxindex}\|^2 N
\end{align*}
using the maximality of $\|b_{\maxindex}\|$ and the first part of the lemma. Furthermore, we have
\begin{align*}
NB&= \sum_{j=1}^N \|b_j\|^2 \leq N\|b_{\maxindex}\|^2\leq 16N\|a_{\maxindex}\|^2 \leq 16N^2A.
\end{align*}
Hence
\begin{align*}
B \leq 16 NA.
\end{align*}
Let now $\sqrt{B} \leq 2 \Delta$. We can argue in the same way as for the second estimate of the first part: If $\|b_j\| \geq 2\Delta$, then as in \eqref{eq:CSCS_est_1}
\begin{align*}
\|a_j\| \leq {3}  \|b_j\|.
\end{align*}
If $\|b_j\| \leq 2\Delta$, then by \eqref{eq:CSCS_est_2} and the arguments thereafter we get
\begin{align*}
\|a_j\| \leq 6 \Delta. 
\end{align*}
Putting both estimates together and using the notation $\tilde{A}_1$ for the index set of all $j \in \{1,\ldots,N\}$  {such that} $\|b_j\| \geq 2 \Delta$ as well as  $\tilde{A}_2$ for the index set of all $j \in \{1,\ldots,N\}$ {such that} $\|b_j\| < 2 \Delta$ {yield}
\begin{align*}
NA&= \sum^N_{j = 1} \|a_j \|^2 = \sum_{j \in \tilde{A}_1} \|a_j\|^2+\sum_{j \in \tilde{A}_{2}} \|a_j\|^2 \\
&\leq 9\sum_{j \in \tilde{A}_1} \|b_j\|^2 + |\tilde{A}_2| \cdot 36 \Delta^2 \\
&\leq 9 NB + 36 N\Delta^2   \\
&\leq N({9} B + 36 \Delta^2 ) \\
&\leq N({36}\Delta^2 + 36 \Delta^2) \\
&\leq 72N \Delta^2.
\end{align*}
Taking the square root on both sides finishes the proof.
\end{proof}

\section{Dimension reduction of the Cucker--Smale model without control} \label{sec:withoutControl}
In this section we consider the projection of the Cucker--Smale system without control. We compare two quantities: First, we calculate the trajectory of the high-dimensional Cucker--Smale system and then project the agents' parameters by $M \in \R^{k\times d}$. Second, we project the initial configurations to dimension $k$  by applications of $M$. Then we compute from these initial values the trajectories of the corresponding low-dimensional Cucker--Smale system. What we shall do in the upcoming Theorem \ref{th:Reduction_uncontrolled} is to give a precise bound from above to the distance between the the two $k$-dimensional trajectories, computed as described above.

More formally, given $M \in \R^{k\times d}$ (where $k \leq d$) and initial conditions $(x(0),v(0))$ for \eqref{eq:freesystemhi}, we indicate with $(y(t),w(t))$ the  solution of the $\R^k$-projected Cucker--Smale system 
\begin{align*}
\left \{\begin{array}{lll}
\dot{y}_{i}(t) & = & w_{i}(t), \\	
\dot{w}_{i}(t) & = & \frac{1}{N}\underset{j=1}{\overset{N}{\sum}}a\left(\left\Vert y_{i}(t)-y_{j}(t)\right\Vert\right)\left(w_{j}(t)-w_{i}(t)\right), \quad i=1,\ldots,N
\end{array}
\right.
\end{align*}
with initial conditions $y(0)=\left(Mx_1(0),\ldots,Mx_N(0)\right) \in (\R^k)^N$ and $w(0)=\left(Mv_1(0),\ldots,Mv_N(0)\right) \in (\R^k)^N$. 

We introduce the low-dimensional analogues of $X$ and $V$ by
\begin{align}
\label{eq:Wintro}
Y(t):= B(y(t), y(t)), \qquad W(t) := B(w(t), w(t)).
\end{align} 
Here the bilinear form $B$ is intended to act on $\mathbb R^k$ instead of $\mathbb R^d$, but with the same meaning
of the symbol as before.

\begin{Remark}
By Lemma \ref{le:HaHaKim} we know that $V$ and $W$ are decreasing. Hence for all $i,j\in\{1,\ldots,N\}$
\begin{align*}
\|w_i(t)-w_j(t)\|^2 &\leq 2\left(\|w_i(t)-\overline{w}(t)\|^2+ \|w_j(t)-\overline{w}(t)\|^2\right) \\
					&\leq 2 \sum_{\ell=1}^{N} \|w_{\ell}(t)-\overline{w}(t)\|^2 \\
					&\leq 2NW(t) \\
					&\leq 2NW(0), 
\end{align*}
thus
\begin{align*}
\|w_i(t)-w_j(t)\| \leq \sqrt{2NW(0)}.
\end{align*}
An analogous estimate holds for $V$ and $v$. Furthermore, we have
\begin{align*}
\|x_i(0)-x_j(0)\| \leq \sqrt{2NX(0)}.
\end{align*}
\end{Remark}

\begin{Theorem}
\label{th:Reduction_uncontrolled}
Let $\delta>0$, let $\eps \in (0,1)$, and let $M \in \R^{k \times d}$ be a matrix with the weak Johnson--Lindenstrauss property of parameters $\eps$ and $\delta$ at the vectors $x_i(t)-x_j(t)$ for all $t \in [0,T]$ and all $i,j \in \{1,\ldots,N\}$.

Define the following errors:
\begin{align*}
	e^x_i(t) &= \| y_i(t) - Mx_i(t) \|, \quad\phantom{XX'''} e^v_i(t) = \| w_i(t) - Mv_i(t) \|, \\
\cal{E}^x (t) &= \max_{i = 1, \ldots, N} e^x_i(t), \quad\phantom{XXXXX} \cal{E}^v (t) = \max_{i = 1, \ldots, N} e^v_i(t), \\
	\cal{E}_2^x (t) &= \left(\frac{1}{N}\sum_{i=1}^N (e^x_i(t))^2\right)^{1/2}, \quad  \cal{E}_2^v(t)=\left(\frac{1}{N}\sum_{i=1}^N (e^v_i(t))^2\right)^{1/2}.
\end{align*}

  Furthermore, let $L_a$ be the Lipschitz constant of the function $a$, set $K_1:= L_a \sqrt{NW(0)} \sqrt{2X(0)}$, $K_2:=2L_a\sqrt{NW(0)}$, and $K_3=1/2\cdot L_a\sqrt{NW(0)}\sqrt{2V(0)}$. Then for all $t \in [0,T]$ the estimates
\begin{align*}
\cal{E}^x(t)+ \cal{E}^v(t) \leq \sqrt{N} ((\eps K_1+\delta K_2) t+ \eps K_3 t^2) \cdot e^{t \left\| \cal{K} \right\|_{\ell_1 \rightarrow \ell_1} }
\end{align*}
 and 
\begin{align*}
\cal{E}_2^x(t)+ \cal{E}_2^v(t) \leq ((\eps K_1+\delta K_2) t+ \eps K_3 t^2) \cdot e^{t \left\| \cal{K} \right\|_{\ell_1 \rightarrow \ell_1} },
\end{align*}
hold, where
\begin{align*}
	  \cal{K}=\left[
		\begin{array}{cc}
		2 a(0) & 2L_a\sqrt{NW(0)} \\
		1 & 0
		\end{array}
		\right].
	\end{align*}
Moreover
\begin{align*}
\cal{E}^v(t) & \leq \sqrt{N} \min \left\{ ((\eps K_1+\delta K_2) t+ \eps K_3 t^2) \cdot e^{t \left\| \cal{K} \right\|_{\ell_1 \rightarrow \ell_1}}, \left(\|M\|\sqrt{V(t)} + \sqrt{W(t)}\right) \right\}.
\end{align*}
\end{Theorem}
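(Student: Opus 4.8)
The plan is to reduce the comparison of the two $k$-dimensional trajectories to a two-dimensional linear differential inequality whose coefficient matrix is exactly $\cal{K}$, and then close it by a Gr\"onwall / variation-of-constants estimate; the three displayed bounds will be the $\ell_2$ version, the $\ell_\infty$ version, and a refinement. First I would differentiate the errors. Since $\dot{y}_i = w_i$ and $\frac{d}{dt}Mx_i = Mv_i$, we have $\frac{d}{dt}(y_i - Mx_i) = w_i - Mv_i$, hence $\frac{d}{dt}e^x_i \leq e^v_i$. For $e^v_i$ I would write $\frac{d}{dt}(w_i - Mv_i) = \dot{w}_i - M\dot{v}_i$ and, adding and subtracting $\frac1N\sum_j a(\|y_i-y_j\|)(Mv_j - Mv_i)$, split the right-hand side into a part that keeps the \emph{same} communication weights but differences of consensus errors $(w_j-Mv_j)-(w_i-Mv_i)$ — bounded by $a(0)\,(e^v_i + \frac1N\sum_j e^v_j)$ since $a$ is positive and non-increasing — and a part carrying the \emph{weight difference} $a(\|y_i-y_j\|) - a(\|x_i-x_j\|)$ tested against $Mv_j - Mv_i$.

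For the weight difference I would insert the intermediate value $a(\|Mx_i-Mx_j\|)$: Lipschitz continuity of $a$ with the reverse triangle inequality gives $|a(\|y_i-y_j\|) - a(\|Mx_i-Mx_j\|)| \leq L_a(e^x_i + e^x_j)$, while the weak Johnson--Lindenstrauss property at $x_i(t)-x_j(t)$ yields $|a(\|Mx_i-Mx_j\|) - a(\|x_i-x_j\|)| \leq L_a(\eps\|x_i-x_j\| + 2\delta)$, the $\eps\|x_i-x_j\|$ coming from the quasi-isometry alternative \eqref{eq:JLL_xquasi} and the $2\delta$ from the both-small alternative \eqref{eq:JLL_xsmall}. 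I would then feed in the a priori bounds of the preceding Remark, which rest on the monotonicity of $V$ and $W$ from Lemma \ref{le:HaHaKim}: $\|w_i-w_j\| \leq \sqrt{2NW(0)}$ and $\|v_i-v_j\| \leq \sqrt{2NV(0)}$ for all $t$, so that $\|Mv_j-Mv_i\| \lesssim \sqrt{NW(0)}$ up to $e^v$-corrections, and integrating the velocity bound gives the essential linear-in-time growth $\|x_i(t)-x_j(t)\| \leq \sqrt{2NX(0)} + t\sqrt{2NV(0)}$. Substituting, the error-independent part of the $e^v_i$-estimate becomes affine in $t$, with constant term of order $\eps\sqrt{X(0)}+\delta$ and slope of order $\eps\sqrt{V(0)}$; this is precisely the source that integrates to $(\eps K_1 + \delta K_2)t + \eps K_3 t^2$.

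Collecting the strictly linear-in-error contributions produces the vector inequality $\frac{d}{dt}(\cal{E}_2^v, \cal{E}_2^x)^{\top} \leq \cal{K}(\cal{E}_2^v, \cal{E}_2^x)^{\top} + (g(t),0)^{\top}$, where the averages $\frac1N\sum_j e^x_j$ and $\frac1N\sum_j e^v_j$ are transferred to the mean-square norms by Cauchy--Schwarz and Minkowski; this is where the $2a(0)$ and $2L_a\sqrt{NW(0)}$ entries of $\cal{K}$ emerge, with $\frac{d}{dt}\cal{E}_2^x \leq \cal{E}_2^v$ supplying the bottom row. Summing the two components and using $\langle \mathbf{1}, \cal{K}u\rangle \leq \|\cal{K}\|_{\ell_1\to\ell_1}\|u\|_1$ reduces everything to the scalar comparison $\dot{z} \leq \|\cal{K}\|_{\ell_1\to\ell_1}z + g(t)$ for $z = \cal{E}_2^x + \cal{E}_2^v$ with $z(0)=0$, and variation of constants with the affine $g$ gives the $\cal{E}_2$-estimate; the max-estimate then follows from $\cal{E}^x \leq \sqrt{N}\,\cal{E}_2^x$ and $\cal{E}^v \leq \sqrt{N}\,\cal{E}_2^v$. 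For the $\min$-bound, the first branch is just the $\cal{E}_2^v$-part of this estimate times $\sqrt{N}$; the second branch uses that in the uncontrolled systems the means are conserved and $w(0)=Mv(0)$, so $\overline{w} = M\overline{v}$ for all $t$, whence $w_i - Mv_i = w_i^\perp - Mv_i^\perp$ and $e^v_i \leq \|w_i^\perp\| + \|M\|\|v_i^\perp\| \leq \sqrt{NW(t)} + \|M\|\sqrt{NV(t)}$; maximizing over $i$ and taking the smaller of the two bounds finishes the proof.

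The main obstacle will be the bookkeeping in assembling $\cal{K}$: the product of the weight-difference estimate with $\|Mv_j-Mv_i\|$ generates, besides the clean linear terms, quadratic cross-terms in the errors such as $(e^x_i+e^x_j)(e^v_i+e^v_j)$ and source-times-error terms. Showing that these are dominated by, or can be absorbed into, the retained linear part over the finite horizon $[0,T]$ — while still landing on the exact constants $K_1,K_2,K_3$ and matrix entries — is the delicate point. The conceptual keys that make it go through are the monotonicity of $V$ and $W$ (for time-uniform velocity bounds) and the fact that $\|x_i-x_j\|$ grows at most linearly, which is exactly what converts the integrated source into the quadratic-in-$t$ term.
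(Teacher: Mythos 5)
Your architecture matches the paper's proof almost everywhere: differentiating the errors, inserting the intermediate value $a(\|Mx_i-Mx_j\|)$ so that the weak Johnson--Lindenstrauss dichotomy \eqref{eq:JLL_xquasi}/\eqref{eq:JLL_xsmall} yields the source $L_a(\eps\|x_i-x_j\|+2\delta)$, transferring everything to $\cal{E}_2^x,\cal{E}_2^v$ by Cauchy--Schwarz and Minkowski, closing the two-dimensional inequality with the $\ell_1$-Gronwall estimate, passing to $\cal{E}^x,\cal{E}^v$ by the factor $\sqrt N$, and deriving the second branch of the $\min$-bound from conservation of the means and $\overline{w}=M\overline{v}$. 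The one genuine gap is in your initial splitting, and it is exactly the obstacle you flag at the end without resolving. By adding and subtracting $\frac1N\sum_j a(\|y_i-y_j\|)(Mv_j-Mv_i)$, you place the weight difference against $\|Mv_j-Mv_i\|$, which you can only control as $\sqrt{2NW(0)}+e^v_i+e^v_j$: the direct bound $\|M\|\,\|v_j-v_i\|$ would plant $\|M\|$ (possibly of order $\sqrt d$) into the constants, and the hypothesis grants the weak Johnson--Lindenstrauss property only at the position differences $x_i(t)-x_j(t)$, not at velocity differences, so no quasi-isometry is available for $Mv_j-Mv_i$. This produces the quadratic cross-terms $(e^x_i+e^x_j)(e^v_i+e^v_j)$ and linear-in-error terms with the time-growing coefficient $L_a(\eps\|x_i-x_j\|+2\delta)$ that you acknowledge. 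These are nonnegative extra contributions, so absorbing them necessarily enlarges either the entries of $\cal{K}$ or the source; the theorem asserts the \emph{exact} constants $K_1,K_2,K_3$ and exponent $\|\cal{K}\|_{\ell_1\rightarrow\ell_1}$ for all $\eps\in(0,1)$, $\delta>0$ and all $t\in[0,T]$, with no smallness assumption that would let you dominate them, and a bootstrap via the a priori bound $e^v_i\leq\|M\|\|v_i^\perp\|+\|w_i^\perp\|$ reintroduces $\|M\|$ and fails for the same reason.

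The repair is a one-line change, and it is what the paper does in \eqref{eq:jtriangleo}: split the other way, adding and subtracting $a(\|x_i-x_j\|)(w_j-w_i)$, so that the weight difference is tested against $w_j-w_i$ while the error difference $(w_j-w_i)-(Mv_j-Mv_i)$ carries the weight $a(\|x_i-x_j\|)\leq a(0)$. Since $(y,w)$ is itself an uncontrolled Cucker--Smale trajectory, Lemma \ref{le:HaHaKim} makes $W$ decreasing, whence $\|w_j(t)-w_i(t)\|\leq\sqrt{2NW(0)}$ uniformly in $t$ with no error corrections, and the resulting differential inequality is genuinely linear with exactly the claimed $\cal{K}$; the source then integrates to $(\eps K_1+\delta K_2)t+\eps K_3t^2$ precisely as you describe, provided the growth of $\|x_i-x_j\|$ is estimated at the mean-square level (Minkowski gives $\sqrt{2X(0)}+t\sqrt{2V(0)}$, which is why $K_1,K_3$ do not carry the extra $\sqrt N$ present in your pointwise bounds $\sqrt{2NX(0)}+t\sqrt{2NV(0)}$). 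With this single modification the rest of your argument, including the $\min$-estimate, goes through verbatim.
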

\begin{proof}
We estimate the decay of $\cal{E}_2^x (t)$ and $\cal{E}_2^v (t)$ in order to use Gronwall's Lemma. For the following estimates we may assume that -- without loss of generality -- $e^v_i(t)\neq 0$ for $t\in[0,T]$ and for every $i=1,\ldots,N$: if this is not the case, either $e^v_i \equiv 0$ in a neighborhood of $t$ or, by continuity, the estimates will also hold true at $t$. Hence we may assume that $e^v_i$ is differentiable at $t \in [0,T]$. By Cauchy--Schwarz inequality it holds
\begin{align*}
	\frac{d}{dt}  e^v_i(t) & = \frac{\langle w_i(t) - Mv_i(t), \frac{d}{dt}(w_i(t) - Mv_i(t)) \rangle}{\|w_i(t) - Mv_i(t) \|} \\
	& \leq  \left\| \dot{w}_i(t) - M\dot{v}_i(t) \right\| \\
	& \leq  \frac{1}{N} \sum^N_{j = 1} \left\| a(\|y_i(t) - y_j(t) \|)(w_j(t) - w_i(t)) - a(\|x_i(t) - x_j(t) \|)(Mv_j(t) - Mv_i(t)) \right\|.
\end{align*}
Using triangle inequality, the Lipschitz continuity of $a$ and its monotonicity, we obtain 
\begin{align}
\label{eq:jtriangleo}
\begin{split}
	\frac{d}{dt}  e^v_i(t) & \leq \frac{1}{N} \sum^N_{j = 1} \Big[ \left|a(\|y_i(t) - y_j(t) \|) - a(\|x_i(t) - x_j(t) \|) \right| \|w_j(t) - w_i(t)\| +  \\
	&  \quad + a(\|x_i(t) - x_j(t) \|) \left\|(w_j(t) - w_i(t)) - (Mv_j(t) - Mv_i(t)) \right\| \Big] \\
	& \leq \frac{1}{N} \sum^N_{j = 1}  \Big[ L_a  \left|\|y_i(t) - y_j(t) \| - \|x_i(t) - x_j(t) \| \right| \|w_j(t) - w_i(t)\| +  \\
	&  \quad + a(0) \left\|(w_j(t) - w_i(t)) - (Mv_j(t) - Mv_i(t)) \right\| \Big].
\end{split}
\end{align}
We now estimate the derivative of $\cal{E}_2^v$. First of all, again by Cauchy--Schwarz inequality it follows
\begin{align}
\label{eq:Norminsert}
\begin{split}
	\frac{d}{dt}\cal{E}^v_2(t) &=	\frac{d}{dt} \left(\frac{1}{N} \sum_{i=1}^N\left\|(w_i(t) - Mv_i(t)) \right\|^2\right)^{1/2} \\
	&= \frac{1}{\left(\frac{1}{N} \sum_{i=1}^N\left\|(w_i(t) - Mv_i(t)) \right\|^2\right)^{1/2}} \cdot \left(\frac{1}{N} \sum_{i=1}^N\left\|(w_i(t) - Mv_i(t)) \right\| \cdot \frac{d}{dt} \left\|(w_i(t) - Mv_i(t)) \right\|\right)^{1/2}  \\
	&\leq \left(\frac{1}{N} \sum_{i=1}^N \left(\frac{d}{dt} \left\|(w_i(t) - Mv_i(t)) \right\|\right)^2\right)^{1/2} \\
&= \left(\frac{1}{N}\sum_{i=1}^N\left(\frac{d}{dt}  e^v_i(t)\right)^2\right)^{1/2}.
\end{split}
	\end{align}
If we insert \eqref{eq:jtriangleo} into the last inequality and we use triangle as well as Cauchy--Schwarz inequality in sequence, we get
\begin{align*}
\frac{d}{dt}\cal{E}_2^v(t)& \leq \left(\frac{1}{N}\sum_{i=1}^N\left(\frac{1}{N} \sum^N_{j = 1} L_a\left|\|y_i(t) - y_j(t) \| - \|x_i(t) - x_j(t) \right\| \|w_j(t) - w_i(t)\|\right)^{2}\right)^{1/2}  \\
	&  \quad + \left(\frac{1}{N}\sum_{i=1}^N\left(\frac{1}{N} \sum^N_{j = 1} a(0)\left\|(w_j(t) - w_i(t)) - (Mv_j(t) - Mv_i(t)) \right\|\right)^{2}\right)^{1/2} \\
	&\leq L_a\left(\frac{1}{N} \sum_{i=1}^N \left(\frac{1}{N}\sum_{j=1}^N \left|\|y_i(t) - y_j(t) \|-\|x_i(t) - x_j(t) \| \right|^2\right)  \cdot \left(\frac{1}{N} \sum_{j=1}^N\|w_j(t) - w_i(t)\|^2 \right)\right)^{1/2}\\
	&  \quad +a(0) \left(\frac{1}{N^2} \sum_{i,j=1}^N\left\|(w_j(t) - w_i(t)) - (Mv_j(t) - Mv_i(t)) \right\|^2\right)^{1/2} \\
	& \leq L_a \left(\frac{1}{N^2} \sum_{i,j=1}^N \left|\|y_i(t) - y_j(t) \| - \|x_i(t) - x_j(t) \|\right|^2 \right)^{1/2} \cdot \max_{i=1,\ldots,N} \left(\frac{1}{N} \sum_{j=1}^N\|w_j(t) - w_i(t)\|^2 \right)^{1/2}  \\
& \quad +a(0) \left(\frac{1}{N^2} \sum_{i,j=1}^N\left\|(w_j(t) - w_i(t)) - (Mv_j(t) - Mv_i(t)) \right\|^2\right)^{1/2}.
\end{align*}
Let us now estimate now the first term of the sum. It holds
\begin{align}
\label{eq:xy_estimate}
\begin{split}
 \left|\|y_i - y_j \| - \|x_i - x_j \| \right| & =  \left|\|y_i - y_j \| - \|Mx_i - Mx_j\| +  \|Mx_i - Mx_j\| - \|x_i - x_j \| \right| \\
& \leq  \left|\|y_i - y_j \| - \|Mx_i - Mx_j\| \right| +  \left| \|Mx_i - Mx_j\| - \|x_i - x_j \| \right| \\
& \leq  \|y_i - Mx_i\|+  \|y_j - Mx_j\| + \left| \| Mx_i - Mx_j \| - \|x_i - x_j \| \right| \\
& \leq e_i^x+e_j^x + \left| \| Mx_i - Mx_j \| - \|x_i - x_j \| \right|
\end{split}
\end{align}
and for all $i=1,\ldots,N$ 
\begin{align*}
\frac{1}{N} \sum_{j=1}^N\|w_j(t) - w_i(t)\|^2 \leq \frac{1}{N} \sum_{i=1}^N\sum_{j=1}^{i-1}\|w_j(t) - w_i(t)\|^2=\frac{1}{2N} \sum_{i,j=1}^N\|w_j(t) - w_i(t)\|^2=NW(t).
\end{align*}
Furthermore, for the second sum we have
\begin{align*} 
&\left(\frac{1}{N^2} \sum_{i,j=1}^N\left\|(w_j(t) - w_i(t)) - (Mv_j(t) - Mv_i(t)) \right\|^2\right)^{1/2} \\
&\phantom{XXXXXXX}\leq \left(\frac{1}{N} \sum_{i=1}^N\left\|(w_i(t) - Mv_i(t)) \right\|^2\right)^{1/2} + \left(\frac{1}{N} \sum_{j=1}^N\left\|(w_j(t) - Mv_j(t)) \right\|^2\right)^{1/2} \\
&\phantom{XXXXXXX}\leq 2 \cal{E}_2^v(t).
\end{align*}
Hence our computation yields
\begin{align*}
	\frac{d}{dt} \cal{E}_2^v(t) & \leq L_a\sqrt{NW(t)} \left(\frac{1}{N^2} \sum_{i,j=1}^N  \left| \| Mx_i - Mx_j \| - \|x_i - x_j \| \right|^2 \right)^{1/2} +  2 L_a\sqrt{NW(t)} \cal{E}_2^x(t) + 2 a(0) \cal{E}_2^v(t). 
\end{align*}
Now we apply the assumptions on the matrix $M$: For every $i,j \in \{1,\ldots,N\}$ and $t \in [0,T]$ either \eqref{eq:JLL_xquasi} holds, and then
\begin{align}
\label{eq:Mx_error}
\left| \| Mx_i(t) - Mx_j(t) \| - \|x_i(t) - x_j(t) \| \right| &\leq \eps \|x_i(t) - x_j(t) \| \leq \eps \left( \|x_i(0) - x_j(0)\|+  \int_{0}^t \|v_i(s)-v_j(s)\| \ ds \right),
\end{align}
or \eqref{eq:JLL_xsmall} holds, and then
\begin{align*}
\left| \| Mx_i(t) - Mx_j(t) \| - \|x_i(t) - x_j(t) \| \right| \leq 2 \delta,
\end{align*}
so we always have
\begin{align*}
\left| \| Mx_i(t) - Mx_j(t) \| - \|x_i(t) - x_j(t) \| \right| \leq \eps \left( \|x_i(0) - x_j(0)\|+  \int_{0}^t \|v_i(s)-v_j(s)\| \ ds \right) + 2 \delta.
\end{align*}
Using (vector-valued) Minkowski inequality and observing that, by Lemma \ref{le:HaHaKim}, $V$ and $W$ are decreasing, we derive
\begin{align} 
	\frac{d}{dt} \cal{E}_2^v(t)  & \leq \eps L_a\sqrt{NW(t)}  \left(\left(\frac{1}{N^2} \sum_{i,j=1}^N \|x_i(0) - x_j(0)\|^2\right)^{1/2} +\left(\frac{1}{N^2} \sum_{i,j=1}^N  \left(\int_{0}^t \|v_i(s)-v_j(s)\| \ ds \right)^2\right)^{1/2}\right) \notag \\
	&\quad +   2 L_a\sqrt{NW(t)} (\delta+ \cal{E}_2^x(t)) + 2 a(0) \cal{E}_2^v(t) \notag \\
	&\leq  \eps L_a \sqrt{NW(0)} \left(\sqrt{2X(0)} +   \int_0^t \left(\frac{1}{N^2} \sum_{i,j=1}^N   \|v_i(s)-v_j(s)\|^2 \right)^{1/2} \ ds\right) \notag \\
	&\quad +   2 L_a\sqrt{NW(0)} (\delta+ \cal{E}_2^x(t)) + 2 a(0) \cal{E}_2^v(t) \notag \\
	&\leq   \eps L_a \sqrt{NW(0)} \left(\sqrt{2X(0)} +  \int_0^t \sqrt{2V(s)} \ ds \right)+   2 L_a\sqrt{NW(0)} (\delta+ \cal{E}_2^x(t)) + 2 a(0) \cal{E}_2^v(t), \label{eq:CSU_derivativeerrorv_partial} \\
	&\leq  \eps  L_a \sqrt{NW(0)} \left(\sqrt{2X(0)} + t\sqrt{2V(0)}\right)+ 2\delta L_a\sqrt{NW(0)} + 2 L_a\sqrt{NW(0)}\cal{E}_2^x(t) + 2 a(0) \cal{E}_2^v(t). \label{eq:CSU_derivativeerrorv}
\end{align}
On the other hand, in the same way as in \eqref{eq:Norminsert}, we obtain
\begin{align*}
	\frac{d}{dt}\cal{E}^x_2(t) 
	&\leq \left(\frac{1}{N} \sum_{i=1}^N \left(\frac{d}{dt} \left\|(y_i(t) - Mx_i(t)) \right\|\right)^2\right)^{1/2} \\
	&\leq \left(\frac{1}{N} \sum_{i=1}^N \left\|\frac{d}{dt}(y_i(t) - Mx_i(t)) \right\|^2\right)^{1/2} \\
	&= \left(\frac{1}{N} \sum_{i=1}^N \left\|w_i(t) - Mv_i(t) \right\|^2\right)^{1/2} \\
       &=\cal{E}^v_2(t).
	\end{align*}
Let $K_1=L_a\sqrt{NW(0)}\sqrt{2X(0)}$, $K_2=2 L_a\sqrt{NW(0)}$, $K_3=1/2 \cdot L_a\sqrt{NW(0)}\sqrt{2V(0)}$, and 
\begin{align*}
	  \cal{K}=\left[
		\begin{array}{cc}
		2 a(0) & 2 L_a \sqrt{NW(0)}\\
		1 & 0
		\end{array}
		\right]
	\end{align*}
be as in the statement of the theorem. Then, rearranging the previous calculations in vector form and integrating from $0$ to $t$,  we get {the inequality} 
	\begin{align*}
		\left[
		\begin{array}{c}
		\cal{E}_2^v(t) \\
		\cal{E}_2^x(t)
		\end{array}
		\right] \leq
		\left[
		\begin{array}{c}
		\cal{E}_2^v(0) + (\eps K_1+\delta K_2) t+ \eps K_3 t^2\\
		\cal{E}_2^x(0) 
		\end{array}
		\right] + \int_{0}^t \cal{K} \cdot
		\left[
		\begin{array}{c}
		\cal{E}^v(s) \\
		\cal{E}^x(s)
		\end{array}
		\right] 
		ds,
	\end{align*}
	Notice that
	\begin{align*}
	  \left\| \cal{K} \right\|_{\ell_1 \rightarrow \ell_1} = \max \left( 2a(0) + 1, 2 L_a \sqrt{NW(0)} \right) \geq 1.
	\end{align*}
	Now we apply the $\ell_1$-norm to the inequality and we use Gronwall's Lemma,  
see Lemma \ref{le:class_gronwall}, to deduce
	\begin{align*}
		\left\|
		\begin{array}{c}
		\cal{E}_2^v(t) \\
		\cal{E}_2^x(t)
		\end{array}
		\right\|_{\ell_1} \leq
		\left\|
		\begin{array}{c}
		\cal{E}_2^v(0) + (\eps K_1+\delta K_2) t+ \eps K_3 t^2 \\
		\cal{E}_2^x(0) 
		\end{array}
		\right\|_{\ell_1} \cdot e^{t \left\| \cal{K} \right\|_{\ell_1 \rightarrow \ell_1} },
	\end{align*}
thus
	\begin{align}
	\label{eq:start_error}
	\begin{split}
		\cal{E}_2^v(t) + \cal{E}_2^x(t) &\leq \left[\cal{E}_2^v(0) + \cal{E}_2^x(0) + (\eps K_1+\delta K_2) t+ \eps K_3 t^2\right] \cdot e^{t \left\| \cal{K} \right\|_{\ell_1 \rightarrow \ell_1} }\\
		&=((\eps K_1+\delta K_2) t+ \eps K_3 t^2) \cdot e^{t \left\| \cal{K} \right\|_{\ell_1 \rightarrow \ell_1} },
	\end{split}
	\end{align}
	since $\cal{E}^v(0) = \cal{E}^x(0) = 0$ by definition.
	
Moreover, since {$\overline{v}(t) = \overline{v}(0)$ and $\overline{w}(t) = \overline{w}(0)$ for every $t \geq 0,$} it holds
\begin{align*}
 \|Mv_i(t) - w_i(t)\| & = \|Mv_i(t) - \overline{w}(0) + \overline{w}(0) - w_i(t)\| \\
& \leq \|Mv_i(t) - M\overline{v}(0)\| + \|\overline{w}(0) - w_i(t)\| \\
& \leq \|M\|\|v_i(t) - \overline{v}(t)\| + \|\overline{w}(t) - w_i(t)\| 
\end{align*}
and hence we have
\begin{align*}
\cal{E}_2^v(t) \leq \|M\| \left({\frac{1}{N}}\sum_{i=1}^N \|v_i(t) - \overline{v}(t)\|^2\right)^{1/2} + \left({\frac{1}{N}}\sum_{i=1}^N \|w_i(t) - \overline{w}(t)\|^2\right)^{1/2}.
\end{align*}
Together with \eqref{eq:start_error}, we deduce the upper bound
\begin{align*}
\cal{E}_2^v(t) & \leq \min \left\{ ((\eps K_1+\delta K_2) t+ \eps K_3 t^2) \cdot e^{t \left\| \cal{K} \right\|_{\ell_1 \rightarrow \ell_1}}, \left(\|M\|\sqrt{V(t)} + \sqrt{W(t)}\right) \right\}.
\end{align*}
Using the trivial estimate of the $\ell_{\infty}$-norm by the $\ell_2$-norm we conclude as well the estimate
\begin{align*}
\cal{E}^v(t) & \leq \sqrt{N} \min \left\{ ((\eps K_1+\delta K_2) t+ \eps K_3 t^2) \cdot e^{t \left\| \cal{K} \right\|_{\ell_1 \rightarrow \ell_1}}, \left(\|M\|\sqrt{V(t)} + \sqrt{W(t)}\right) \right\}.
\end{align*}
and
\begin{align*}
\cal{E}^v(t) + \cal{E}^x(t)  \leq \sqrt{N}((\eps K_1+\delta K_2) t+ \eps K_3 t^2) \cdot e^{t \left\| \cal{K} \right\|_{\ell_1 \rightarrow \ell_1} }.
\end{align*}
\end{proof}

\begin{Remark}
\label{rem:Vreplace}
In the proof we used that $V$ and $W$ are decreasing. When we consider controlled systems below, we even have a better estimate on the integral of $V$. In particular we use the following: \textit{Assume additionally that
\begin{align*}
\int_0^t \sqrt{2V(s)} \ ds \leq \alpha \text{ for all } t\leq T,
\end{align*}
for a fixed $\alpha > 0$. Then for all $t\leq T$ we have 
\begin{align*}
\cal{E}^x(t)+ \cal{E}^v(t) \leq \sqrt{N} ((\eps (K_1+K_4)+\delta K_2) t) \cdot e^{t \left\| \cal{K} \right\|_{\ell_1 \rightarrow \ell_1} }
\end{align*}
with $K_1,K_2, \cal{K}$ as in Theorem \ref{th:Reduction_uncontrolled}, and $K_4= L_a\sqrt{NW(0)} \alpha$.}

{To verify the latter estimate, just consider the boundedness of $\int_0^t \sqrt{2V(s)} \ ds$ within the inequality \eqref{eq:CSU_derivativeerrorv_partial} in the proof of Theorem \ref{th:Reduction_uncontrolled}, and then proceed further as before.}
\end{Remark}

\begin{Remark}
\label{eq:numb_points}
Among the hypotheses of Theorem \ref{th:Reduction_uncontrolled}, we assumed the existence of a matrix $M \in \R^{k \times d}$ fulfilling the weak Johnson--Lindenstrauss property for all curves of the form $x_i(t) - x_j(t)$, where $i,j \in \{1,\ldots,N\}$ and $t \in [0,T]$. We show now that $M$ is such a matrix provided that it fulfills the (strong) Johnson--Lindenstrauss property for all the (finite) vectors of the form $x_i(t_m) - x_j(t_m)$, where $i,j \in \{1,\ldots,N\}$, $m = 0, \ldots, \lceil T\cdot \cal{N}'\rceil-1$, $t_m = m/\lceil T\cdot \cal{N}'\rceil$, for
\begin{align}\label{eq:cal_multi}
\cal{N}' \geq 4 \cdot \frac{\sqrt{2NV(0)} \cdot (\sqrt{d}+2)}{\delta \eps},
\end{align}
and that the target dimension $k$ is sufficiently large.

Indeed, we can adapt the proof of Lemma \ref{le:JLL_cont} in order to obtain a result valid \emph{simultaneously} for all the curves $\varphi_{ij}: [0,T] \rightarrow \R^d$ given by $\varphi_{ij}(t) = x_i(t) - x_j(t)$. For each of these curves we have the Lipschitz estimate
\begin{align} \label{eq:lipschitzX}
\frac{\|(x_i(t_1)-x_j(t_1))-(x_i(t_2)-x_j(t_2))\|}{|t_1-t_2|} \leq \sup_{t \in [0,T]} \|(x_i-x_j)'(t)\| = \sup_{t \in [0,T]} \|(v_i-v_j)(t)\| \leq \sqrt{2NV(0)},
\end{align}
thus $L_{\varphi_{ij}}(0,T) \leq \sqrt{2NV(0)}$. In order for the argument of the proof to work, for each curve $\varphi_{ij}$ we need $\cal{N}\cdot T$ points (where $\cal{N}$ is as in \eqref{eq:cal_multi}, and the factor $T$ is due to stretching the dynamics from a reference time domain $[0,1]$ to $[0,T]$) at which the (strong) Johnson--Lindenstrauss property must hold, bringing the total number of points $\cal{N}$ at which that property must be true to $\cal{N}'\cdot T\cdot N^2$. So it holds
\begin{align*}
\cal{N}  \sim \sqrt{NV(0)}  \cdot \frac{\sqrt{d}}{\delta\eps} \cdot T \cdot N^2.
\end{align*}
Thus, if $M$ is a $k \times d$ matrix fulfilling the (strong) Johnson--Lindenstrauss property of parameter $\eps$ at these $\cal{N}$ points, where $k \geq k_0$ with
\begin{align*}
k_0 \lesssim \log(\cal{N}) \cdot \eps^{-2} \lesssim \log\left( \sqrt{NV(0)}  \cdot \frac{\sqrt{d}}{\delta\eps} \cdot T \cdot N^2\right) \cdot \eps^{-2} \sim \left(\log(T\cdot N\cdot d\cdot V(0))+|\log{\delta\eps}|\right) \cdot \eps^{-2} ,
\end{align*}
then $M$ satisfies also the hypothesis of Theorem \ref{th:Reduction_uncontrolled} for any $\delta>0$.

\end{Remark}

\begin{Remark}
\label{rem:noDinvolved}
In the remark above we calculated the necessary minimal dimension $k_0$ for a matrix $M$ to sa\-tisfy the weak Johnson--Lindenstrauss property for all curves of the form $x_i(t) - x_j(t)$, where $i,j \in \{1,\ldots,N\}$ and $t \in [0,T]$. The dependency of $k_0$ on $N$ and $\eps$ is quite natural, but the dependency on the dimension $d$, even only logarithmically, is perhaps not desirable. But one can circumvent the dependence on the dimension using certain direct estimates within the proof of Theorem \ref{th:Reduction_uncontrolled}. In analogy to what we did before, take $t_m=m/ \cal{N}'$ with $m=0,\ldots, \lceil T\cdot \cal{N}'\rceil - 1$ and $\cal{N}'$ -- the number of sampling points -- is to be chosen large enough later on. Furthermore, we assume that the matrix $M$ fulfills the (strong) Johnson--Lindenstrauss property at $t_m$, i.e., we require that $M$ satisfies
\begin{align*}
\begin{split}
(1-\eps) \|x_i(t_m)-x_j(t_m)\| \leq \|M(x_i(t_m)-x_j(t_m))\| \leq (1+\eps) \|x_i(t_m)-x_j(t_m)\| \\
(1-\eps) \|v_i(t_m)-v_j(t_m)\| \leq \|M(v_i(t_m)-v_j(t_m))\| \leq (1+\eps) \|v_i(t_m)-v_j(t_m)\|
\end{split}
\end{align*} 
for all $i,j \in \{1,\ldots,N\}$ and $m=0,\ldots, \lceil T\cdot \cal{N}'\rceil -1$. Now, for any $t \in [0,T]$ choose $m \in \{0,\ldots, \lceil T\cdot \cal{N}'\rceil-1 \}$ such that $t \in [t_m,t_{m+1}]$. We start at the estimate \eqref{eq:xy_estimate}:
\begin{align}
\label{eq:xyM_estimate}
\begin{split}
&\left| \| x_i(t) - x_j(t) \| - \|y_i(t) - y_j(t) \| \right| \\
&\qquad\qquad\leq\left| \| x_i(t_m) - x_j(t_m) \| - \|y_i(t_m) - y_j(t_m) \| \right| + \left|\| x_i(t) - x_j(t) \|-\| x_i(t_m) - x_j(t_m) \| \right| \\
&\qquad\qquad\quad
+\left|\| y_i(t) - y_j(t) \|-\| y_i(t_m) - y_j(t_m) \| \right|  \\
&\qquad\qquad \leq \left| \| x_i(t_m) - x_j(t_m) \| - \|Mx_i(t_m) - Mx_j(t_m) \| \right|+ e_i^x(t_m)+e_j^x(t_m) + \frac{L_{x_i-x_j}}{\cal{N}'} +  \frac{L_{y_i-y_j}}{\cal{N}'},
\end{split}
\end{align}
where $L_{x_i-x_j}=L_{x_i-x_j}(0,T)$ and $L_{y_i-y_j}=L_{y_i-y_j}(0,T)$ are the Lipschitz constants of the functions $(x_i-x_j)(\cdot)$ and $(y_i-y_j)(\cdot)$ on $[0,T]$, respectively. Furthermore, using the (strong) Johnson--Lindenstrauss property of $M$ at $t_m$ we get the same estimates as in \eqref{eq:Mx_error}, only with $t_m$ instead of $t$:
\begin{align*}
\left| \| Mx_i(t_m) - Mx_j(t_m) \| - \|x_i(t_m) - x_j(t_m) \| \right| &\leq \eps \|x_i(t_m) - x_j(t_m) \| \\
&\leq \eps \left( \|x_i(0) - x_j(0)\|+  \int_{0}^{t_m} \|v_i(s)-v_j(s)\| \ ds \right).
\end{align*}
For the estimate of the last two terms in \eqref{eq:xyM_estimate} we choose $\cal{N}'$ large enough so that
\begin{align*}
\cal{N}' \sim \frac{\max\left(L_{x_i-x_j}, L_{y_i-y_j}\right)}{\delta}.
\end{align*}
Thus we arrive at
\begin{align*}
\left| \| x_i(t) - x_j(t) \| - \|y_i(t) - y_j(t) \| \right| \leq e_i^x(t_m)+e_j^x(t_m) + \eps \left( \|x_i(0) - x_j(0)\|+  \int_{0}^{t_m} \|v_i(s)-v_j(s)\| \ ds \right)+2\delta.
\end{align*}
Following the steps of the proof of Theorem \ref{th:Reduction_uncontrolled}, we can get an analogue of \eqref{eq:CSU_derivativeerrorv}:
\begin{align*}
\frac{d}{dt} \cal{E}_2^v(t) 	&\leq \eps  L_a \sqrt{NW(0)} \left(\sqrt{2X(0)} + t\sqrt{2V(0)}\right)+ 2 \delta L_a\sqrt{NW(0)} + 2 L_a\sqrt{NW(0)}\cal{E}_2^x(t_m) + 2 a(0) \cal{E}_2^v(t).
\end{align*}
So, the main difference is the replacement of $\cal{E}_2^x(t)$ with $\cal{E}_2^x(t_m)$ on the right-hand sides, with $t_m=m/ \cal{N}'$. At this point in the proof of Theorem \ref{th:Reduction_uncontrolled} we applied Gronwall's Lemma, see the estimates before \eqref{eq:start_error}. Now here we intend to use its discrete version, Lemma \ref{le:discr_gronwall}: let again $K_1=L_a\sqrt{NW(0)}\sqrt{2X(0)}$, $K_2=2 L_a\sqrt{NW(0)}$, and $K_3=1/2 \cdot L_a\sqrt{NW(0)}\sqrt{2V(0)}$. Integrating between $t_m$ and $t$ we get 
	\begin{align*}
		\left[
		\begin{array}{c}
		\cal{E}_2^v(t) \\
		\cal{E}_2^x(t)
		\end{array}
		\right] \leq
		\left[
		\begin{array}{c}
		\cal{E}_2^v(t_m) + (\eps K_1+\delta K_2) (t-t_m)+ \eps K_3 (t^2-t_m^2) + K_2 \cal{E}_2^x(t_m) (t-t_m)\\
		\cal{E}_2^x(t_m) 
		\end{array}
		\right] + \int_{t_m}^t \cal{K}' \cdot
		\left[
		\begin{array}{c}
		\cal{E}^v(s) \\
		\cal{E}^x(s)
		\end{array}
		\right] 
		ds,
	\end{align*}
	where
	\begin{align*}
	  \cal{K}'=\left[
		\begin{array}{cc}
		2 a(0) & 0\\
		1 & 0
		\end{array}
		\right].
	\end{align*}
Now applying the $\ell_1$-norm and Lemma \ref{le:discr_gronwall} we get
	\begin{align*}
		\left\|
		\begin{array}{c}
		\cal{E}_2^v(t) \\
		\cal{E}_2^x(t)
		\end{array}
		\right\|_{\ell_1} \leq
		\left\|
		\begin{array}{c}
		\cal{E}_2^v(0) + (\eps K_1+\delta K_2) t+ \eps K_3 t^2 \\
		\cal{E}_2^x(0) 
		\end{array}
		\right\|_{\ell_1} \cdot e^{t \left(\left\| \cal{K}' \right\|_{\ell_1 \rightarrow \ell_1} +K_2\right)}.
	\end{align*}
This is a slightly worse estimate than the original one of Theorem \ref{th:Reduction_uncontrolled} by a factor $2$ in the exponential, since 
\begin{align*}
	  \|\cal{K}\|_{\ell_1 \rightarrow \ell_1} &=\max\left(2a(0) + 1, 2 L_a \sqrt{NW(0)}\right) \\
	  &\leq 2 L_a \sqrt{NW(0)}+\left\|\left[
		\begin{array}{cc}
		2 a(0) & 0\\
		1 & 0
		\end{array}
		\right]\right\|_{\ell_1 \rightarrow \ell_1} \\
		&= K_2+\|\cal{K}'\|_{\ell_1 \rightarrow \ell_1}\\
		&\leq 2  \max\left(2a(0) + 1, 2 L_a \sqrt{NW(0)}\right) \\
		&= 2\|\cal{K}\|_{\ell_1 \rightarrow \ell_1}. 
\end{align*}
So eventually we obtain
	\begin{align*}
		\cal{E}_2^v(t) + \cal{E}_2^x(t) &\leq \left[\cal{E}_2^v(0) + \cal{E}_2^x(0) + (\eps K_1+\delta K_2) t+ \eps K_3 t^2\right] \cdot e^{2t \left\| \cal{K} \right\|_{\ell_1 \rightarrow \ell_1} }\\
		&=((\eps K_1+\delta K_2) t+ \eps K_3 t^2) \cdot e^{2t \left\| \cal{K} \right\|_{\ell_1 \rightarrow \ell_1} }.
\end{align*}
At the cost of a slightly worse estimate, we gain, however, that the admissible lower dimensionality $k$ of the matrix $M$ does not depend anymore on the higher dimension $d$: indeed we make use of the (strong) Johnson--Lindenstrauss property on $\cal{N}= 2 \cdot \lceil T\cdot \cal{N}'\rceil \cdot N^2$ points. Hence, it suffices to take the minimal target dimension $k_0$ such that $M \in \R^{k\times d}$ with $k \geq k_0$ for
\begin{align*}
k_0 \lesssim \log\left(T \cdot \cal{N}' \cdot N^2\right) \cdot \eps^{-2}.
\end{align*}
Actually, in oder to verify the independence of the dimension $d$, we have to estimate the number of sampling points $\cal{N}'$ independently of it. By \eqref{eq:lipschitzX} in Remark \ref{eq:numb_points}, we know that $L_{x_i - x_j} \leq \sqrt{2NV(0)}$. Analogously, for $L_{y_i-y_j}$ we have 
\begin{align*}
\frac{\|(y_i(t_1)-y_j(t_1))-(y_i(t_2)-y_j(t_2))\|}{|t_1-t_2|}  \leq \sqrt{2NW(0)} \leq \sqrt{2NV(0)(1+\eps)^2}\leq \sqrt{8NV(0)},
\end{align*}
since 
\begin{align*}
\|w_i(0)-w_j(0)\|=\|Mv_i(0)-Mv_j(0)\|\leq (1+\eps)\|v_i(0)-v_j(0)\|.
\end{align*}
Hence we obtain
\begin{align*}
k \lesssim \log\left(T \cdot \cal{N}' \cdot N\right) \cdot \eps^{-2} \lesssim \log\left(T \cdot \sqrt{NV(0)} \cdot \delta^{-1} \cdot N\right) \cdot \eps^{-2} \sim \left(\log\left(T \cdot N \cdot V(0) \right)+|\log \delta|\right) \cdot \eps^{-2},
\end{align*}
so that we confirmed that there is no asymptotic dependence on $d$.
\end{Remark}

\begin{Remark}
The  estimate of Theorem \ref{th:Reduction_uncontrolled}
\begin{align*}
\cal{E}^v(t) & \leq \sqrt{N} \min \left\{ ((\eps K_1+\delta K_2) t+ \eps K_3 t^2) \cdot e^{t \left\| \cal{K} \right\|_{\ell_1 \rightarrow \ell_1}}, \left(\|M\|\sqrt{V(t)} + \sqrt{W(t)}\right) \right\}.
\end{align*}
explains the plot  presented in \cite[Fig.\ 3.5]{FHV13}, where surprisingly the error for large time was shown  to decrease instead of exploding according to classical Gronwall's estimates. Indeed, since $V(t)$ and $W(t)$ are decreasing functions, there is a time when the bound swaps from the exponential Gronwall-type bound to the decreasing curve given by
\begin{align*}
\sqrt{N}\left(\|M\|\sqrt{V(t)} + \sqrt{W(t)}\right).
\end{align*}
Moreover, if both the high-dimensional and the low-dimensional trajectories entered the consensus region already, then $V(t)$ and $W(t)$ approach $0$ as $t$ tends to $+\infty$, forcing $\cal{E}^v(t)$ to tend to $0$.
The vanishing of the discrepancy between the low-dimensional trajectory $(w_k(t))_{k = 1}^N$ of the consensus parameters and the projected trajectory $(Mv_k(t))_{k = 1}^N$ is a remarkable property of the Cucker--Smale system \eqref{eq:freesystemhi} as the initial mean-consensus parameter $\overline{w}(0) = M\overline{v}(0)$ is actually a conserved quantity.
\end{Remark}



\begin{Remark}
In the theorem we can replace $\cal{E}^v(t)$ by $\cal{E}^{v^{\perp}}(t)$ because $0=\|M\overline{v}(0)-\overline{w}(0)\|=\|M\overline{v}(t)-\overline{w}(t)\|$.
\end{Remark}

\section{Dimension reduction of the Cucker--Smale model with control} \label{sec:withControl}

{It was proven in \cite{CFPT14} that a system of type \eqref{eq:freesystemhi} can be driven to the consensus region using a \emph{sparse control strategy}, i.e., a control acting at every instant only on  one agent, whose consensus parameter is the farthest away from the mean consensus parameter. However, if the dimension $d$ of each agent is very large, the numerical simulation of such a dynamical system and its sparse control becomes computationally demanding.

In this section we consider a $k$-dimensional Cucker--Smale system, where $k \ll d$, having as initial conditions the projection of the initial configuration of the original $d$-dimensional system. The projection will be done by a matrix $M \in \R^{k\times d}$ fulfilling the (strong) Johnson--Lindenstrauss property for a certain amount of points. We shall show that the solution of the $k$-dimensional system obtained in this way will stay close to the projected dynamics of the original $d$-dimensional system via the matrix $M$. This, in turn, shall allow us to prove our main result: If we gather the information of which is the farthest agent away from consensus in the $k$-dimensional system and we control this agent in the original high-dimensional system by the sparse strategy presented in \cite{CFPT14}, then we will still be able to drive the high-dimensional system to the consensus region in finite time and with a near-optimal rate.}

One of the main consequences of this fact is that simulations following this strategy will save a relevant amount of computational time with respect to approaching directly the problem in high dimension: indeed, we present in Section \ref{sec:numerics} numerical examples, which show that we can take $k$ even conspicuously smaller than $d$ and still be able to implement a successful sparse control strategy steering the dynamics to the consensus region nearly optimally.

Formally, let us now consider a controlled version of the high-dimensional system
\begin{align} \label{eq:controledsystemhi}
\left \{\begin{array}{lll}
\dot{x}_{i}(t) & = & v_{i}(t)\\
\dot{v}_{i}(t) & = & \frac{1}{N}\underset{j=1}{\overset{N}{\sum}}a\left(\left\Vert x_{i}(t)-x_{j}(t)\right\Vert\right)\left(v_{j}(t)-v_{i}(t)\right) + u^h_i(t), \quad i=1,\ldots,N
\end{array}
\right .
\end{align}
with initial datum $(x(0), v(0)) \in (\mathbb{R}^d)^N \times (\mathbb{R}^d)^N$, and of the associated low-dimensional system
\begin{align} \label{eq:controledsystemlow}
\left \{\begin{array}{lll}
\dot{y}_{i}(t) & = & w_{i}(t), \\	
\dot{w}_{i}(t) & = & \frac{1}{N}\underset{j=1}{\overset{N}{\sum}}a\left(\left\Vert y_{i}(t)-y_{j}(t)\right\Vert\right)\left(w_{j}(t)-w_{i}(t)\right) + u^{\ell}_i(t), \quad i=1,\ldots,N
\end{array}
\right.
\end{align}
with initial condition $(y(0), w(0)) \in (\mathbb{R}^k)^N \times (\mathbb{R}^k)^N$, where $y_i(0) = Mx_i(0)$ and $w_i(0) = Mv_i(0)$ for every $i=1,\ldots,N$, and $M \in \R^{k\times d}$ is a matrix fulfilling the (strong) Johnson--Lindenstrauss property at certain points of the high-dimensional trajectories.


We have already stated that the control  $u^h$ in high dimension shall depend on $u^{\ell}$, the low-dimensional one. Since the latter control is a function of the low-dimensional dynamics determined by the initial datum $(y(0), w(0))$, which in turn depends on $M$, the trajectories of the high-dimensional dynamics depend on $M$ as well. 

As already stated before, given a set of $N$ points, not necessarily explicitly, a random matrix generated by one of the constructions reported in Remark \ref{rm:numberofpoints} fulfills the Johnson--Lindenstrauss property at these $N$ points with a certain high probability. Unfortunately, in the current situation and differently from the one encountered in Section \ref{sec:withoutControl},  the points on the trajectories at which the Johnson--Lindenstrauss property has to hold seem depending on the matrix $M$ that we have generated!

As we shall see in detail in Section \ref{sec:JLL_matrix}, we can resolve this dependency of the high-dimensional trajectories on the generated matrix M, by observing that the realization of the trajectories depends actually on a finite number of control switchings. Hence, for the moment, we just {\it assume} that the Johnson--Lindenstrauss property holds at certain points of the trajectory and we postpone to Section \ref{sec:JLL_matrix} the explanation of how this assumption can in fact hold true.

In what follows, we shall always indicate with $\theta>0$ the maximal amount of resources that the external policy maker is allowed to spend at every instant to keep the system confined. This means that our controls $u^h$ and $u^{\ell}$ will satisfy -- respectively -- the $\ell^N_1(\ell^d_2)$-constraint and the $\ell^N_1(\ell^k_2)$-constraints
\begin{align*}
\sum^N_{i = 1} \| u^h_i\|_{\ell^d_2} \leq \theta, \quad \sum^N_{i = 1} \| u^{\ell}_i\|_{\ell^k_2} \leq \theta.
\end{align*}

\begin{Definition}
\label{def:control}
Let $T > 0$, $(x(t), v(t)) \in (\R^d)^N \times (\R^d)^N$ and $(y(t), w(t)) \in (\R^k)^N \times (\R^k)^N$ be continuous functions defined on the interval $[0,T]$. Let $V(t)$ and $W(t)$ be as in \eqref{Def:XY} and \eqref{eq:Wintro}, respectively. Let us fix a $\Gamma\geq 0$ and define $T^c_0:=\inf \left\{t \in [0,T]: W(t) \leq \Gamma\right\}$ if the set is non-empty, otherwise set $T^c_0:=T$. We define the componentwise feedback controls $u^{h}$ and $u^{\ell}$ as follows: 
\begin{itemize}
	\item if $t \leq T^c_0$, let $\maxindex(t) \in \{1, \ldots, N\}$ be the smallest index such that
	\begin{align}\label{indexch}
	\|w^{\perp}_{\maxindex}(t)\| = \max_{1 \leq i \leq N} \|w^{\perp}_i(t)\|,
	\end{align}
define
\begin{align*}
u^{\ell}_i (t)= \begin{cases} -\theta \frac{w^{\perp}_i(t)}{\|w^{\perp}_i(t)\|}&, \text{ if } i=\maxindex(t) \\
\hspace{0.28cm} 0&, \text{ if } i \not =  \maxindex(t)
\end{cases}
\end{align*}
and
\begin{align}\label{contrrl}
u^{h}_i (t)= \begin{cases} -\theta \frac{v^{\perp}_i(t)}{\|v^{\perp}_i(t)\|}&, \text{ if } i=\maxindex(t) \\
\hspace{0.28cm} 0&, \text{ if } i \not =  \maxindex(t).
\end{cases} 
\end{align}
\item if $t > T^c_0$, then $u^{h}(t)=0$ and $u^{\ell}(t)=0$. We set $\maxindex(t):=0$.
\end{itemize}
We say that the trajectory in low dimension \emph{has entered the consensus region given by the threshold $\Gamma$} if $t \in [T^c_0,T)$.
\end{Definition}
Let us stress now the following observation.
\begin{Remark} \label{rem:onlyLow}
Notice that the control $u^{h}$ is {\it sparse} (all the components are zero except one) and defined exclusively through the following information: the index $\maxindex$ which is computed from the low-dimensional control problem according to \eqref{indexch}, the consensus parameter $v_{\maxindex}$, which is actually the only information to be observed in high-dimension and enters the definition \eqref{contrrl}, and the mean consensus parameter $\overline{v}(t) = \overline{v}(0) + \frac{1}{N} \sum_{i=1}^N\int_{0}^t u_i^h(s) ds$, which one easily computes by integration and sum of previous controls, and is also used in \eqref{contrrl}.
\end{Remark}

There are situations where the computation of the controls $u^h$ and $u^{\ell}$ from Definition \ref{def:control} turns out to be problematic. For instance, if there are only three agents and in the low dimensional system their consensus parameters form an equiangular and equinormal set of vectors at a certain time $t$, then $u^{\ell}$ (and thus $u^h$) are not pointwise computable after $t$ because of chattering effects. A method to avoid chattering in such situations is the use of sample solutions, as defined in \cite{CLSS97}.

\begin{Definition}
\label{def:Sampling_Control}
Let $U \subseteq \mathbb{R}^m$, $f: \mathbb{R}^m \times U \mapsto f(x, u)$ be continuous in $x$ and $u$ as well as locally Lipschitz in $x$ uniformly on every compact subset of $\mathbb{R}^m \times U$. Given a feedback control function $u: \mathbb{R}^m \rightarrow U$, $\tau > 0$, and $x_0 \in \mathbb{R}^m$ we define the \emph{sampling solution associated with the sampling time} $\tau$ of the differential system
\begin{align*}
\dot{x} = f(x, u(x)), \quad x(0) = x_0
\end{align*}
as the piecewise $C^1$-function $x: [0, T] \rightarrow \mathbb{R}^m$ solving the system 
\begin{align*}
\dot{x} = f(x(t), \widetilde{u}(t))
\end{align*}
in the interval $t \in [n \tau, (n+1)\tau]$ recursively for $n \in \N$, where $\widetilde{u}(t)=u(x(n\tau))$ is constant for $t \in [n\tau, (n+1)\tau]$. As the initial value $x(n\tau)$ we use the endpoint of the solution of the preceding interval and start with $x(0) = x_0$. 
\end{Definition}

Let us fix a sampling time $\tau>0$. In the following we shall consider $d$-dimensional and $k$-dimensional Cucker--Smale systems for $k \ll d$ and feedback controls $u^{h}$ and $u^{\ell}$, respectively, as introduced in Definition \ref{def:control}. We shall focus on their sampling solutions $(x,v)$ and $(y,w)$ associated with $\tau$ as defined in Definition \ref{def:Sampling_Control}, hence
\begin{align*}
\widetilde{u}^{\ell} (t) = u^{\ell}(n\tau), \quad \widetilde{u}^{h} (t) = u^{h}(n\tau)
\end{align*}
for $t \in [n\tau,(n+1)\tau)$. Since we are only able to change the control at times which are multiples of $\tau$, we define the \emph{switch-off time of the sampled control associated with the threshold $\Gamma$} as
\begin{align}
\label{eq:T0s}
T^s_0 := \inf_{n\in \N_0} \left\{n\tau: W(n\tau) \leq \Gamma\right\},
\end{align}
otherwise set $T^s_0:=T$ if the set whose infimum is taken is empty. Because in the rest of the paper we shall deal only with sampled control, we will refer to $T^s_0$ with $T_0$, omitting the superscript.


In the following, we shall show an estimate of the error between the projection of the sampled controlled high-dimensional system and the sampled controlled low-dimensional system, under the crucial assumption of the validity of the weak Johnson--Lindenstrauss property for $M$ for the differences of trajectories of the system.
 
This result is the controlled counterpart of Theorem \ref{th:Reduction_uncontrolled}.

\begin{Proposition}
\label{prop:diff_eq_est}
Let $T>0$, $\Delta>0$ and $k\in \N_0$ with $k\leq d $. Let $\tau>0$ be a sampling time, $\hat{T}>0$ such that $\hat{T}+\tau\leq T$ and let $M \in \R^{k\times d}$. 

Let $(x(t),v(t))$ be the sampling solution of the $d$-dimensional Cucker--Smale system \eqref{eq:controledsystemhi} with initial conditions $(x(0),v(0))$ and $(y(t),w(t))$ be the sampling solution of the $\R^k$-projected Cucker--Smale system \eqref{eq:controledsystemlow} with initial conditions $y(0)=\left(Mx_1(0),\ldots,Mx_N(0)\right) \in (\R^k)^N$ and $w(0)=\left(Mv_1(0),\ldots,Mv_N(0)\right) \in (\R^k)^N$ as defined in Definition \ref{def:Sampling_Control}, where $u_i^h$ and $u_i^\ell$ are the controls from Definition \ref{def:control} with threshold $\Gamma=(2\Delta)^2$. Moreover, let $T_0$ be as in \eqref{eq:T0s}.

Suppose that $W$ is non-increasing in time and that there exists a constant $\alpha \geq 0$ such that
\begin{align*}
\int_0^{t} \sqrt{2V(s)} \ ds \leq \alpha \text{ for all } t \in [0,\min(\hat{T}+\tau,T_0)].
\end{align*}
Let $\eps' \in (0,1)$ be so small that
\begin{align*}
 \eps' \sqrt{N}\left(4 L_a \sqrt{NV(0)} \left(\sqrt{2X(0)} +  \alpha \right) + \frac{\theta}{\sqrt{N}}\right)\cdot (\hat{T}+\tau) e^{(\hat{T}+\tau) \left(\max\left(2a(0) + 1, 4 L_a \sqrt{NV(0)}\right)+ \frac{8\theta}{\Delta}\right)} \leq \frac{\Delta}{2}.
 \end{align*}
Assume that the matrix $M$ 
\begin{enumerate}[label=(JL\arabic*)]
\item \label{item:JLL1} has the weak Johnson--Lindenstrauss property of parameter $\eps=\eps'$ and $\delta = \min\left(\eps'\frac{\sqrt{2X(0)}+\alpha}{2},1/2\right)$ at all points $x_i(t)-x_j(t)$  for $i,j \in \{1,\ldots,N\}$, $t \in [0,\hat{T}+\tau]$,
\item \label{item:JLL2} has the weak Johnson--Lindenstrauss property of parameter $\eps=\eps'$ and $\delta=\Delta$ at all points $v_i(n\tau)-\overline{v}(n\tau)$ for $i \in \{1,\ldots,N\}$, $n=0,\ldots,\lfloor \frac{\hat{T}}{\tau} \rfloor+1 $, and
\item \label{item:JLL3} has the (strong) Johnson--Lindenstrauss property of parameter $\eps=1/2$ at the points $v_i(0)-\overline{v}(0)$ and $x_i(0)-\overline{x}(0)$ for $i \in \{1,\ldots,N\}$ .
\end{enumerate}
Define the following errors:
\begin{align*}
	e^x_i(t) &= \| y_i(t) - Mx_i(t) \|, \quad e^v_i(t) = \| w_i(t) - Mv_i(t) \|, \\
\cal{E}^x (t) &= \max_{i = 1, \ldots, N} e^x_i(t), \quad\phantom{'''''''} \cal{E}^v (t) = \max_{i = 1, \ldots, N} e^v_i(t).
\end{align*}
Then it holds
\begin{align}
\label{eq:est_contr}
 \cal{E}^{v}(t) + \cal{E}^x(t) & \leq \eps'\sqrt{N}\left(4 L_a \sqrt{NV(0)} \left(\sqrt{2X(0)} +  \alpha \right) +  \frac{\theta}{\sqrt{N}}\right) \cdot te^{t \left(\max\left(2a(0) + 1, 4 L_a \sqrt{NV(0)}\right)+ \frac{8\theta}{\Delta}\right)} {\leq \frac{\Delta}{2}}
\end{align}
for all $t \in [0,\min(\hat{T}+\tau,T_0)]$.
\end{Proposition}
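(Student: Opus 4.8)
The plan is to read this as the controlled refinement of Theorem \ref{th:Reduction_uncontrolled}: the Cucker--Smale interaction terms are estimated verbatim as there, and the entire novelty sits in controlling the extra contribution coming from the mismatch $u^\ell_i - Mu^h_i$ between the two sparse controls, together with a bootstrap that keeps the error below the threshold $\Delta/2$ on which the control estimate itself relies. The calibration $\Gamma=(2\Delta)^2$ and the smallness of $\eps'$ are exactly what make this self-consistent.

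First I would differentiate the mean-square errors $\cal{E}_2^v,\cal{E}_2^x$ as in the proof of Theorem \ref{th:Reduction_uncontrolled}. Reusing \eqref{eq:jtriangleo} and \eqref{eq:xy_estimate}, the weak Johnson--Lindenstrauss property \ref{item:JLL1} at the differences $x_i(t)-x_j(t)$, and the bound $\|w_i-w_j\|\le\sqrt{2NW(0)}$ (valid since $W$ is assumed non-increasing, with $\sqrt{W(0)}\lesssim\sqrt{V(0)}$ thanks to the strong property \ref{item:JLL3} at $v_i(0)-\overline v(0)$), I obtain the same coupled system $\frac{d}{dt}\cal{E}_2^x\le\cal{E}_2^v$ and $\frac{d}{dt}\cal{E}_2^v\le 2a(0)\cal{E}_2^v+2L_a\sqrt{NW(0)}\,\cal{E}_2^x+(\text{source})$. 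Here the hypothesis $\int_0^t\sqrt{2V}\,ds\le\alpha$ lets me replace the quadratic $t^2$-term by a linear one exactly as in Remark \ref{rem:Vreplace}, and the choice $\delta=\min(\eps'(\sqrt{2X(0)}+\alpha)/2,1/2)$ in \ref{item:JLL1} is made precisely so that the $\delta K_2$-source is dominated by the $\eps'$-source, producing the factor $4L_a\sqrt{NV(0)}(\sqrt{2X(0)}+\alpha)$. The only genuinely new term on the right of the $e^v_i$-inequality is $\|u^\ell_i-Mu^h_i\|$, which is nonzero only for $i=\maxindex$, so in the mean-square estimate it enters as $N^{-1/2}\|u^\ell_{\maxindex}-Mu^h_{\maxindex}\|$.

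The heart of the argument is bounding this control mismatch. The crucial structural fact is that both controls act on the \emph{same} index $\maxindex$, selected from the low-dimensional system, so that $u^\ell_{\maxindex}-Mu^h_{\maxindex}=-\theta\bigl(w^\perp_{\maxindex}/\|w^\perp_{\maxindex}\|-Mv^\perp_{\maxindex}/\|v^\perp_{\maxindex}\|\bigr)$. Assuming inductively that $\cal{E}^v\le\Delta/2$, the triangle inequality and $\|\overline w-M\overline v\|\le\cal{E}_2^v$ give $\|w^\perp_{\maxindex}-Mv^\perp_{\maxindex}\|\le 2\cal{E}^v\le\Delta$, which is exactly hypothesis (ii) of Lemma \ref{le:CSCS_technical} for $a_i=v^\perp_i(n\tau)$, $b_i=w^\perp_i(n\tau)$ (hypothesis (i) being \ref{item:JLL2}). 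Since $t<T_0$ forces $\|w^\perp_{\maxindex}(n\tau)\|\ge\sqrt{W(n\tau)}>2\Delta$, Lemma \ref{le:CSCS_technical} places us in its first case, yielding $\|v^\perp_{\maxindex}\|\ge\|w^\perp_{\maxindex}\|/4$ and $\|Mv^\perp_{\maxindex}\|\ge\|w^\perp_{\maxindex}\|-\Delta>\Delta$; the latter rules out the small alternative in \ref{item:JLL2}, so the strong bound $|\,\|v^\perp_{\maxindex}\|-\|Mv^\perp_{\maxindex}\|\,|\le\eps'\|v^\perp_{\maxindex}\|$ holds. Splitting through the intermediate direction $Mv^\perp_{\maxindex}/\|Mv^\perp_{\maxindex}\|$ and using the elementary estimate $\left\|\frac{p}{\|p\|}-\frac{q}{\|q\|}\right\|\le\frac{2\|p-q\|}{\|p\|}$ together with $\|w^\perp_{\maxindex}\|>2\Delta$, I obtain $\|u^\ell_{\maxindex}-Mu^h_{\maxindex}\|\lesssim\theta(\cal{E}^v(n\tau)/\Delta+\eps')$. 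Because the control is sampled, the $\cal{E}^v(n\tau)$-part is evaluated at the last switching time, so (as in Remark \ref{rem:noDinvolved}) this forces the use of the discrete Gronwall lemma (Lemma \ref{le:discr_gronwall}) in place of the continuous one; after the conversion $\cal{E}^v\le\sqrt N\cal{E}_2^v$ it is absorbed into the exponential rate, accounting for the summand $8\theta/\Delta$, while the $\theta\eps'$ part becomes, via the $\sqrt N$ conversion, the $\theta/\sqrt N$ source term in the statement.

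Finally I would assemble the two inequalities into the $2\times2$ system governed by $\cal{K}$, apply the discrete Gronwall lemma exactly as in Remark \ref{rem:noDinvolved}, and pass from $\cal{E}_2$ to $\cal{E}$ through the trivial factor $\sqrt N$, which reproduces the displayed bound. The inductive assumption $\cal{E}^v\le\Delta/2$ is then discharged by a continuity/bootstrap argument: the set of times in $[0,\min(\hat T+\tau,T_0)]$ on which $\cal{E}^v+\cal{E}^x\le\Delta/2$ is closed, contains $0$ (where both errors vanish), and on it the Gronwall bound gives $\cal{E}^v+\cal{E}^x\le G(t)\le G(\hat T+\tau)\le\Delta/2$, the last inequality being precisely the smallness hypothesis imposed on $\eps'$; hence the bound can never first be violated before the endpoint. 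I expect this interlocking of the control estimate with the threshold---one needs $\cal{E}^v\le\Delta/2$ to invoke Lemma \ref{le:CSCS_technical}, yet $\cal{E}^v\le\Delta/2$ is the very conclusion---to be the main obstacle, and it is resolved exactly by the bootstrap together with the calibrations $\Gamma=(2\Delta)^2$ and the $\eps'$-smallness that ensure $G(\hat T+\tau)\le\Delta/2$.
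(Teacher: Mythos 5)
Your proposal is correct and follows essentially the same route as the paper's proof: the same mean-square error system with the uncontrolled estimates of Theorem \ref{th:Reduction_uncontrolled} (made linear in $t$ via the $\alpha$-hypothesis as in Remark \ref{rem:Vreplace}, with the $\delta$-source absorbed into the $\eps'$-source), the same bound on the control mismatch using $\sqrt{W(n\tau)}>2\Delta$, the inductive bound $\|Mv^{\perp}_{\maxindex}(n\tau)-w^{\perp}_{\maxindex}(n\tau)\|\leq 2\cal{E}^v(n\tau)\leq\Delta$, and the resulting strong case of \ref{item:JLL2}, the same discrete Gronwall absorption of $8\theta/\Delta$ into the exponential rate, and the same closure through the $\eps'$-smallness condition. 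The only cosmetic deviations are that the paper derives $\|v^{\perp}_{\maxindex}(n\tau)\|\geq\frac{1}{1+\eps'}\|Mv^{\perp}_{\maxindex}(n\tau)\|\geq\Delta/2$ directly rather than invoking Lemma \ref{le:CSCS_technical} (which it reserves for Theorem \ref{eq:convergence}, and whose hypothesis (i) would anyway require $\eps'\leq 1/2$), and that it runs an explicit induction over the sampling intervals instead of your first-violation bootstrap, which is an equivalent formulation.
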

\begin{proof}
We argue by induction: We want to show that if \eqref{eq:est_contr} holds true at $t \in \{0,\tau,\ldots,n\tau\}$, then it is also true for $ t \in [n,(n+1)\tau]$, in particular at $t=(n+1)\tau$, as long as $n\tau \leq \hat{T}$ and $n\tau < T_0$, i.e., the control is not switched off before $(n+1)\tau$. 
Obviously, \eqref{eq:est_contr} holds for $n=0$, this means at $t=0$, and actually arguing in the same way as in the following inductive step, the base
step is verified. 

So, let $t \in [n\tau, (n+1)\tau]$ for $n \in \N_0$. First, we consider the estimate on the agent on which the control is acting. We shall estimate the decay in order to use Gronwall Lemma as in Theorem \ref{th:Reduction_uncontrolled}. We have
\begin{align}
	\begin{split}
	\label{eq:est_der}
		\frac{d}{dt} e_{\maxindex}^{v}(t) & \leq \left\| \frac{d}{dt} \left( w_{\maxindex}(t)- Mv_{\maxindex}(t)\right)\right\| \\
		&\leq   \frac{1}{N} \sum^N_{j = 1} \left\| a(\|y_{\maxindex}(t) - y_j(t) \|)(w_j(t) - w_{\maxindex}(t)) - a(\|x_{\maxindex}(t) - x_j(t) \|)(Mv_j(t) - Mv_{\maxindex}(t)) \right\|  \\
		& \quad + \theta  \left\| \frac{w^{\perp}_{\maxindex}(n\tau)}{\|w^{\perp}_{\maxindex}(n\tau)\|} - \frac{Mv^{\perp}_{\maxindex}(n\tau)}{\|v^{\perp}_{\maxindex}(n\tau)\|} \right\|.
		\end{split}
		\end{align}
	       For $i \in \{1\ldots,N\}$ and $i\neq \maxindex$ we have
	       \begin{align}
	       	\label{eq:est_der2}
	       \frac{d}{dt} e_{i}^{v}(t) &\leq   \frac{1}{N} \sum^N_{j = 1} \left\| a(\|y_i(t) - y_j(t) \|)(w_j(t) - w_i(t)) - a(\|x_i(t) - x_j(t) \|)(Mv_j(t) - Mv_i(t)) \right\|.
		\end{align}
		We now focus on the control term:
		\begin{align}
		\label{eq:moon}
		\begin{split}
		&\left\| \frac{w^{\perp}_{\maxindex}(n\tau)}{\|w^{\perp}_{\maxindex}(n\tau)\|} - \frac{Mv^{\perp}_{\maxindex}(n\tau)}{\|v^{\perp}_{\maxindex}(n\tau)\|} \right\| \\
		&\quad = \frac{1}{\|w^{\perp}_{\maxindex}(n\tau)\|\|v^{\perp}_{\maxindex}(n\tau)\|} \left\|\|v^{\perp}_{\maxindex}(n\tau)\|w^{\perp}_{\maxindex}(n\tau) - \|w^{\perp}_{\maxindex}(n\tau)\|Mv^{\perp}_{\maxindex}(n\tau) \right\| \\
		& \quad= \frac{1}{\|w^{\perp}_{\maxindex}(n\tau)\|\|v^{\perp}_{\maxindex}(n\tau)\|} \left\|\left(\|v^{\perp}_{\maxindex}(n\tau)\|-\|w^{\perp}_{\maxindex}(n\tau)\|\right)w^{\perp}_{\maxindex}(n\tau) - \|w^{\perp}_{\maxindex}(n\tau)\|\left(Mv^{\perp}_{\maxindex}(n\tau) - w^{\perp}_{\maxindex}(n\tau) \right) \right\| \\
		&\quad \leq \frac{1}{\|v^{\perp}_{\maxindex}(n\tau)\|}\left|\|v^{\perp}_{\maxindex}(n\tau)\|-\|w^{\perp}_{\maxindex}(n\tau)\|\right| + \frac{1}{\|v^{\perp}_{\maxindex}(n\tau)\|}
	\left\|Mv^{\perp}_{\maxindex}(n\tau) - w^{\perp}_{\maxindex}(n\tau) \right\|.
		\end{split}
		\end{align}
		Since by assumption $n\tau<T_0$ and \eqref{eq:est_contr} holds at $n\tau$ by the inductive hypothesis, it follows
		\begin{align}
		\label{eq:est_wi}
		\begin{split}
		\|w^{\perp}_{\maxindex}(n\tau)\| &\geq \sqrt{W(n\tau)} > 2 \Delta, \\
		\|Mv^{\perp}_{\maxindex}(n\tau) - w^{\perp}_{\maxindex}(n\tau)\| &\leq \|Mv_{\maxindex}(n\tau) - w_{\maxindex}(n\tau)\|+\frac{1}{N} \sum_{j=1}^N \|Mv_{j}(n\tau) - w_{j}(n\tau)\|\leq 2\cal{E}^{v}(n\tau) \leq \Delta.
		\end{split}
		\end{align}
Hence
		\begin{align}
		\label{eq:largerDelta}
		\|Mv^{\perp}_{\maxindex}(n\tau)\| >  \Delta.
		\end{align}
		From assumption \ref{item:JLL2} and \eqref{eq:largerDelta} it follows that the (strong) Johnson--Lindenstrauss property with parameter $\eps=\eps'$ holds at $v^{\perp}_{\maxindex}$. 		
		Hence
		\begin{align*}
		\left|\|v^{\perp}_{\maxindex}(n\tau)\|-\|w^{\perp}_{\maxindex}(n\tau)\|\right| & \leq \left|\|v^{\perp}_{\maxindex}(n\tau)\| - \|Mv^{\perp}_{\maxindex}(n\tau)\|\right| + \left|\|Mv^{\perp}_{\maxindex}(n\tau)\| -\|w^{\perp}_{\maxindex}(n\tau)\|\right| \\
		& \leq \left|\|v^{\perp}_{\maxindex}(n\tau)\| - \|Mv^{\perp}_{\maxindex}(n\tau)\|\right| + \|Mv^{\perp}_{\maxindex}(n\tau) -w^{\perp}_{\maxindex}(n\tau)\| \\
		& \leq \eps'\|v^{\perp}_{\maxindex}(n\tau)\| + \|Mv^{\perp}_{\maxindex}(n\tau) -w^{\perp}_{\maxindex}(n\tau)\|
		\end{align*}
		and 
			\begin{align*}
		\|v^{\perp}_{\maxindex}(n\tau)\| \geq \frac{1}{1+\eps'}  \|Mv^{\perp}_{\maxindex}(n\tau)\| \geq \frac{\Delta}{2}. 
\end{align*}
Inserting these estimates into \eqref{eq:moon} and using \eqref{eq:est_wi} we get
		\begin{align*}
		\begin{split}
		\left\| \frac{w^{\perp}_{\maxindex}(n\tau)}{\|w^{\perp}_{\maxindex}(n\tau)\|} - \frac{Mv^{\perp}_{\maxindex}(n\tau)}{\|v^{\perp}_{\maxindex}(n\tau)\|} \right\| & \leq \eps' + 2 \frac{ \|Mv^{\perp}_{\maxindex}(n\tau) -w^{\perp}_{\maxindex}(n\tau)\|}{\|v^{\perp}_{\maxindex}(n\tau)\|} \\
		& \leq \eps' + \frac{8\cal{E}^{v}(n\tau)}{\Delta}.
	\end{split}
	\end{align*}
Now we add the estimates for the derivatives of $e_{\maxindex}^{v}$ in \eqref{eq:est_der} and $e_{i}^{v}$ for $i\neq \maxindex$ in \eqref{eq:est_der2}. By \ref{item:JLL1} the weak Johnson--Lindenstrauss property of parameter $\eps=\eps'$ and $\delta=\min\left(\eps'\frac{\sqrt{2X(0)}+\alpha}{2},1/2\right)$ holds at $x_i(t)-x_j(t)$ for $t \in [0,(n+1)\tau)$. Hence, for the first (uncontrolled) part of \eqref{eq:est_der} and \eqref{eq:est_der2} we can use the same estimate as in \eqref{eq:CSU_derivativeerrorv_partial}. Thus, setting 
\begin{align*}
\cal{E}_2^v(t)=\left(\frac{1}{N}\sum_{i=1}^N (e^v_i(t))^2\right)^{1/2}, \quad \cal{E}_2^x (t) = \left(\frac{1}{N}\sum_{i=1}^N (e^x_i(t))^2\right)^{1/2}
\end{align*}
we obtain the bound
	\begin{align*}
	\begin{split}
	\frac{d}{dt}\cal{E}_2^{v}(t)&\leq \left(\frac{1}{N}\sum_{i=1}^N\left(\frac{d}{dt}  e^{v}_i(t)\right)^2\right)^{1/2}\\
	&\leq  \eps' L_a \sqrt{NW(0)} \left(\sqrt{2X(0)} +  \int_0^t \sqrt{2V(s)} \ ds \right)+   2 L_a\sqrt{NW(0)} (\delta+ \cal{E}_2^x(t)) + 2 a(0) \cal{E}_2^{v}(t) \\
	& \quad + \frac{\theta}{\sqrt{N}} \left\| \frac{w^{\perp}_{\maxindex}(n\tau)}{\|w^{\perp}_{\maxindex}(n\tau)\|} - \frac{Mv^{\perp}_{\maxindex}(n\tau)}{\|v^{\perp}_{\maxindex}(n\tau)\|} \right\| \\
	&\leq \eps' \left(2 L_a \sqrt{NW(0)} \left(\sqrt{2X(0)} +  \alpha \right) + \frac{\theta}{\sqrt{N}}\right) + 2 L_a\sqrt{NW(0)}\cal{E}_2^x(t) + 2 a(0) \cal{E}_2^{v}(t)+ \frac{8\theta}{\Delta}\cal{E}_2^{v}(n\tau)
	\end{split}
\end{align*}
using $\cal{E}^{v}(n\tau)\leq \sqrt{N} \cal{E}_2^{v}(n\tau)$ and the definition of $\delta$. For $\cal{E}_2^x$ we have
\begin{align*}
	\begin{split}
	\frac{d}{dt}\cal{E}_2^x(t)& \leq \left(\frac{1}{N}\sum_{i=1}^N\left(\frac{d}{dt}  e^x_i(t)\right)^2\right)^{1/2} \\
	&\leq\left(\frac{1}{N}\sum_{i=1}^N\left(e^v_i(t)\right)^2\right)^{1/2} \\
	&= \cal{E}_2^{v}(t).
	\end{split}
\end{align*}
By integrating the estimates for $\frac{d}{dt}\cal{E}_2^{v}(t)$ and $\frac{d}{dt}\cal{E}_2^x(t)$ between $n\tau$ and $t$  we get 
\begin{align*}
		\left[
		\begin{array}{c}
		\cal{E}_2^{v}(t) \\
		\cal{E}_2^x(t)
		\end{array}
		\right] \leq
		\left[
		\begin{array}{c}
		\left(1+ \frac{8\theta}{\Delta} (t - n\tau) \right)\cal{E}_2^{v}(n\tau)+\eps'\left(K_1+\frac{\theta}{\sqrt{N}}\right)   (t - n\tau) \\
		\cal{E}_2^x(n\tau)
		\end{array}
		\right] + \int_{n\tau}^t \cal{K} \cdot
		\left[
		\begin{array}{c}
		\cal{E}_2^{v}(s) \\
		\cal{E}_2^x(s)
		\end{array}
		\right] 
		ds
	\end{align*}
	with 
	\begin{align*}
	  \cal{K}=\left[
		\begin{array}{cc}
		2 a(0) & 2 L_a\sqrt{NW(0)} \\
		1 & 0
		\end{array}
		\right]
	\end{align*}
		and
	\begin{align*}
K_1=2 L_a \sqrt{2NW(0)} \left(\sqrt{2X(0)} +  \alpha \right).
\end{align*}
Now we apply the $\ell_1$-norm to the inequality and obtain 
	\begin{align*}
		\cal{E}_2^{v}(t) + \cal{E}_2^x(t) &\leq \eps' \left(K_1 + \frac{\theta}{\sqrt{N}}\right)  (t - n\tau)+\left(1+ \frac{8\theta}{\Delta} (t - n\tau)\right)\left(\cal{E}_2^{v}(n\tau)+ \cal{E}_2^x(n\tau)\right) \\
		& \quad +\int_{n\tau}^{t} \left\|\cal{K}\right\|_{\ell_1 \rightarrow \ell_1} \cdot (\cal{E}_2^{v}(s) + \cal{E}_2^x(s)) \ ds. 
	\end{align*}	
 The discrete Gronwall Lemma \ref{le:discr_gronwall} applied for
 \begin{align*}
\rho(t):=\left(K_1 + \frac{\theta}{\sqrt{N}}\right) \eps' t, \quad \beta_1(t):=\frac{8\theta}{\Delta}t, \quad \beta_2(t)\equiv  \left\| \cal{K} \right\|_{\ell_1 \rightarrow \ell_1}, \quad u(t):=\cal{E}_2^{v}(t) + \cal{E}_2^x(t)\geq 0
\end{align*}
yields
\begin{align*}
		\cal{E}_2^{v}(t) + \cal{E}_2^x(t) &  \leq \left[\cal{E}_2^{v}(0) + \cal{E}^x(0)\right]\cdot e^{t \left(\left\| \cal{K} \right\|_{\ell_1 \rightarrow \ell_1}+ \frac{8\theta}{\Delta}\right)} +\eps'\left(K_1 +  \frac{\theta}{\sqrt{N}}\right) t e^{t\left(\left\| \cal{K} \right\|_{\ell_1 \rightarrow \ell_1}+ \frac{8\theta}{\Delta}\right)} \\
	&=\eps' \left(K_1 + \frac{\theta}{\sqrt{N}}\right) t e^{t\left(\left\| \cal{K} \right\|_{\ell_1 \rightarrow \ell_1}+ \frac{8\theta}{\Delta}\right)}
	\end{align*}
	because the initial time errors are $0$ by definition of the low-dimensional system.
	Hence using a trivial estimate of the $\ell_2$-norm by the $\ell_{\infty}$-norm we conclude the induction and also the proof:
	\begin{align*}
		\cal{E}^{v}(t) + \cal{E}^x(t)&\leq \eps' \sqrt{N}\left(K_1 + \frac{\theta}{\sqrt{N}}\right) t e^{t\left(\left\| \cal{K} \right\|_{\ell_1 \rightarrow \ell_1}+ \frac{8\theta}{\Delta}\right)}\\ 
		&\leq  \eps' \sqrt{N}\left(4 L_a \sqrt{NV(0)} \left(\sqrt{2X(0)} +  \alpha \right) + \frac{\theta}{\sqrt{N}}\right)\cdot t e^{t \left(\max\left(2a(0) + 1, 4 L_a \sqrt{NV(0)}\right)+ \frac{8\theta}{\Delta}\right)}
\end{align*}
using that $\sqrt{W(0)}\leq (1+1/2) \sqrt{V(0)} \leq 2 \sqrt{V(0)}$ by \ref{item:JLL3}.
\end{proof}

Now we are in the position of  showing that we can steer both the low- and high-dimensional systems simultaneously to the consensus region using the control defined in Definition \ref{def:control} and Definition \ref{def:Sampling_Control}. We repeat that this means that we choose the index of the agent on which the sparse control acts from the low-dimensional system and use the same index for the control in the high-dimensional system. The challenge here is ensuring that the control coming out of this procedure drives the high-dimensional system to consensus as well. For this we need the estimates from Proposition \ref{prop:diff_eq_est} to show that the error of the projection of the high-dimensional system and the low-dimensional system stay near to each other. Additionally, from \cite{CFPT14} it is known that the low-dimensional system will be steered {\it optimally} to the consensus region in finite time using the sampled version of the control introduced in Definition \ref{def:control}.

\begin{Theorem}
\label{eq:convergence}
Let $x(0)=\left(x_1(0),\ldots,x_N(0)\right) \in (\R^d)^N $ and $v(0)=\left(v_1(0),\ldots,v_N(0)\right) \in (\R^d)^N$ be given and let  $k\leq d$. Let $\gamma$, $X(0)$ as well as $V(0)$ be defined as before and let $c,C$ be the constants from Lemma \ref{le:CSCS_technical}. Let
\begin{align}
\label{eq:Xbar}
	\overline{X} := 2X(0) + \frac{2 N^2}{c^2 \theta^2}V(0)^2,
\end{align}
as well as
\begin{align}
\label{eq:Delta}
\Delta:=\min\left(\frac{\gamma(\overline{X})}{C},\frac{1}{2} \gamma\left(4\overline{X}\right)\right),
\end{align}
and
\begin{align}
\label{eq:That}
\hat{T} := \frac{2N}{\theta} \left( 2\sqrt{V(0)} -2 \Delta\right).
\end{align}
Let $\tau_0>0$ be so small that
\begin{align}
		\begin{split}\label{eq:CSCS_tausmall}
			\tau_0 \left(a(0)\sqrt{N}\sqrt{V(0)}+\theta\right) + \tau_0^2 a(0)\theta \leq \frac{\Delta}{4},
			\end{split}
		\end{align}
and fix $\tau\in (0,\tau_0]$. Furthermore, let $M \in \R^{k\times d}$. Let $u^{h}$ and $u^{\ell}$ be the controls as in Definition \ref{def:control} with threshold $\Gamma= (2\Delta)^2$. Let $(x(t),v(t))$ be the sampling solution of the $d$-dimensional Cucker--Smale system
\begin{align*} 
\left \{\begin{array}{lll}
\dot{x}_{i}(t) & = & v_{i}(t)\\
\dot{v}_{i}(t) & = & \frac{1}{N}\underset{j=1}{\overset{N}{\sum}}a\left(\left\Vert x_{i}(t)-x_{j}(t)\right\Vert \right)\left(v_{j}(t)-v_{i}(t)\right) + u^h_i(t), \quad i=1,\ldots,N
\end{array}
\right .
\end{align*}
with initial conditions $(x(0),v(0))$ associated with the sampling time $\tau$ and $(y(t),w(t))$ be the sampling solution of the $\R^k$-projected Cucker--Smale system
\begin{align*}
\left \{\begin{array}{lll}
\dot{y}_{i}(t) & = & w_{i}(t), \\	
\dot{w}_{i}(t) & = & \frac{1}{N}\underset{j=1}{\overset{N}{\sum}}a\left(\left\Vert y_{i}(t)-y_{j}(t)\right\Vert \right)\left(w_{j}(t)-w_{i}(t)\right) + u^{\ell}_i(t), \quad i=1,\ldots,N
\end{array}
\right.
\end{align*}
with initial conditions $y(0)=\left(Mx_1(0),\ldots,Mx_N(0)\right) \in (\R^k)^N$ and $w(0)=\left(Mv_1(0),\ldots,Mv_N(0)\right) \in (\R^k)^N$ associated with the sampling time $\tau$.

Let $\alpha=\frac{\sqrt{2}N}{c \theta}$ and $\eps' \in (0,1)$ be so small that 
\begin{align}
\label{eq:epssuperest}
 \eps' \sqrt{N} \left(4 L_a \sqrt{NV(0)} \left(\sqrt{2X(0)} +  {\alpha} \right) + \frac{\theta}{\sqrt{N}}\right)\cdot (\hat{T}+\tau)e^{(\hat{T}+\tau)\left(\max\left(2a(0) + 1, 4 L_a \sqrt{NV(0)}\right)+ \frac{8\theta}{\Delta}\right)} \leq \frac{\Delta}{2}.
 \end{align}
Assume that the matrix $M$ 
\begin{enumerate}[label=(JL\arabic*)]
\item \label{item:JL1} has the weak Johnson--Lindenstrauss property of parameter $\eps=\eps'$ and $\delta =\eps'\frac{\sqrt{2X(0)}+\alpha}{2}$ at all points $x_i(t)-x_j(t)$  for $i,j \in \{1,\ldots,N\}$, $t \in [0,\hat{T}+\tau]$,
\item \label{item:JL2} has the weak Johnson--Lindenstrauss property of parameter $\eps=\eps'$ and $\delta=\Delta$ at all points $v_i(n\tau)-\overline{v}(n\tau)$ for $i \in \{1,\ldots,N\}$, $n=0,\ldots,\lfloor \frac{\hat{T}}{\tau} \rfloor+1 $, and
\item \label{item:JL3} has the (strong) Johnson--Lindenstrauss property of parameter $\eps=1/2$ at the points $v_i(0)-\overline{v}(0)$ and $x_i(0)-\overline{x}(0)$ for $i \in \{1,\ldots,N\}$ .
\end{enumerate}
Then there exists an $n \in \N_0$ such that $\sqrt{W(n\tau)}\leq 2\Delta$. Moreover, setting  
\begin{align*}
T_0=n^*\tau, \text{ where } n^*:=\min\left \{ n \in \N_0: \sqrt{W(n\tau)}\leq 2\Delta\right\},
\end{align*}
it holds that at $T_0$ both the high-dimensional and the projected low-dimensional systems are in the consensus region defined by Lemma \ref{le:HaHaKim}. Furthermore, we have the estimates
\begin{align*}
 		T_0\leq \frac{2N}{\theta} \left( \sqrt{W(0)} - 2\Delta\right)+\tau\leq \hat{T}+\tau
	\end{align*}
as well as
\begin{align}
\label{eq:M}
\begin{split}
\max_{t \in [0, T_0]} \max_{i, j} \| x_i(t) - x_j(t) \| \leq 2\sqrt{N \overline{X}}, \\
\max_{t \in [0, T_0]} \max_{i, j} \| v_i(t) - v_j(t) \| \leq 2\sqrt{N V(0)}.
\end{split}
\end{align}
\end{Theorem}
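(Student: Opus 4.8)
The plan is to run a single induction over the sampling indices $n=0,1,2,\dots$, propagating three coupled invariants up to time $n\tau$: first, the projected high-dimensional and low-dimensional trajectories stay within $\Delta/2$ of each other, i.e.\ $\cal{E}^v(n\tau)+\cal{E}^x(n\tau)\le \Delta/2$; second, the high-dimensional fluctuation $\sqrt{V}$ decays linearly, which yields the integral bound $\int_0^{n\tau}\sqrt{2V(s)}\,ds\le\alpha$; and third, the low-dimensional fluctuation $\sqrt{W}$ strictly decreases by a definite amount on each active step. The first invariant is precisely the conclusion of Proposition \ref{prop:diff_eq_est}, whose hypotheses \ref{item:JLL1}--\ref{item:JLL3} match \ref{item:JL1}--\ref{item:JL3} here and whose smallness requirement is \eqref{eq:epssuperest}; but that proposition also needs $\int_0^t\sqrt{2V}\,ds\le\alpha$ as an input, so the invariants must be advanced together rather than in sequence.

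The engine of the induction is the transfer Lemma \ref{le:CSCS_technical}. On each interval $[n\tau,(n+1)\tau)$ on which $\sqrt{W(n\tau)}>2\Delta$ (control still active), I would apply it with $a_i=v_i^\perp(n\tau)$, $b_i=w_i^\perp(n\tau)$, $A=V(n\tau)$, $B=W(n\tau)$: hypothesis (i) is supplied by \ref{item:JL2}, while hypothesis (ii), namely $\|Mv_i^\perp-w_i^\perp\|\le\Delta$, follows from the first invariant through $\|Mv_i^\perp-w_i^\perp\|\le 2\cal{E}^v\le\Delta$, exactly as in \eqref{eq:est_wi}. Since $\sqrt{B}\ge2\Delta$, the lemma gives $\|v_{\maxindex}^\perp\|\ge c\sqrt{V}$, so the agent selected as farthest in low dimension is genuinely far in high dimension. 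A direct computation of $\frac{d}{dt}V$, in which the interaction part is nonpositive by Lemma \ref{le:HaHaKim} and the sparse control contributes, after accounting for the drift of $\overline{v}(t)$, a term $-\tfrac{2\theta}{N}\|v_{\maxindex}^\perp\|$, then yields $\frac{d}{dt}\sqrt{V}\le -\tfrac{c\theta}{N}$ up to the sampling error controlled by \eqref{eq:CSCS_tausmall} (which freezes the control direction at $v_{\maxindex}^\perp(n\tau)$). Integrating this linear decay bounds $\int_0^t\sqrt{2V}\,ds\le\alpha$, closing the second invariant and re-validating Proposition \ref{prop:diff_eq_est} on the next interval. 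The identical computation in low dimension, where $\|w_{\maxindex}^\perp\|\ge\sqrt{W}$ by maximality, gives $\frac{d}{dt}\sqrt{W}\le-\tfrac{\theta}{N}$ up to sampling loss; this is precisely the sparse-control decay of \cite{CFPT14}, and it forces $\sqrt{W(n\tau)}$ below $2\Delta$ after finitely many steps, producing $n^*$ and the bound $T_0\le\frac{2N}{\theta}(\sqrt{W(0)}-2\Delta)+\tau\le\hat{T}+\tau$ via $\sqrt{W(0)}\le2\sqrt{V(0)}$ from \ref{item:JL3}.

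It then remains to check at $t=T_0$ that both systems lie in the consensus region of Lemma \ref{le:HaHaKim}, i.e.\ $\gamma(\cdot)\ge\sqrt{\cdot}$. For the spreads \eqref{eq:M}, monotonicity of $\sqrt{V}$ gives $\max_{i,j}\|v_i-v_j\|\le\sqrt{2NV(0)}\le2\sqrt{NV(0)}$, while integrating $\dot x=v$ through $\frac{d}{dt}\sqrt{X}\le\sqrt{V}$ together with the just-established integral bound gives $\sqrt{X(T_0)}\le\sqrt{X(0)}+\int_0^{T_0}\sqrt V\le\sqrt{\overline{X}}$, hence $X(T_0)\le\overline{X}$; transporting the initial spread through $M$ and integrating $\sqrt{W}$ likewise yields $Y(T_0)\le4\overline{X}$. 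For the high-dimensional condition, at $T_0$ one has $\sqrt{W(T_0)}\le2\Delta$, so the second case of Lemma \ref{le:CSCS_technical} gives $\sqrt{V(T_0)}\le C\Delta\le\gamma(\overline{X})\le\gamma(X(T_0))$ by the choice \eqref{eq:Delta} and monotonicity of $\gamma$. For the low-dimensional condition one bounds $\sqrt{W(T_0)}\le2\Delta\le\gamma(4\overline{X})\le\gamma(Y(T_0))$, again by \eqref{eq:Delta}. Thus both systems enter the consensus region exactly at the switch-off time $T_0$.

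The main obstacle is the circular coupling between the error estimate and the decay of $V$: Proposition \ref{prop:diff_eq_est} needs $\int\sqrt{2V}\le\alpha$ to conclude $\cal{E}^v\le\Delta/2$, yet the only route to the $V$-integral bound runs through Lemma \ref{le:CSCS_technical}, whose hypothesis (ii) is precisely $\cal{E}^v\le\Delta/2$. Breaking this loop demands the step-by-step induction above, advancing all three invariants one sampling interval at a time. Here it is essential that the sampling time $\tau\le\tau_0$ be small enough (condition \eqref{eq:CSCS_tausmall}) that freezing the control direction over $[n\tau,(n+1)\tau)$ degrades the per-step decay of both $\sqrt V$ and $\sqrt W$ by at most a constant factor; otherwise the agent selected as farthest could cease to be the farthest one within a step, and the decay estimates, hence the transfer argument, would break down.
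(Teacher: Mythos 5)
Your proposal is correct and follows essentially the same route as the paper: the paper's proof runs exactly this simultaneous induction (its invariants $P_1(n)$--$P_3(n)$ are your three coupled invariants), invokes Proposition \ref{prop:diff_eq_est} and Lemma \ref{le:CSCS_technical} at each active sampling step to transfer the decay from $W$ to $V$, and concludes with the same $\gamma$-comparisons at $T_0$, including your resolution of the circularity between the error estimate and the $V$-integral bound. Only a cosmetic slip: the per-step rates should read $\frac{d}{dt}\sqrt{V}\le-\frac{c\theta}{2N}$ and $\frac{d}{dt}\sqrt{W}\le-\frac{\theta}{2N}$ (the factor $2$ you dropped is precisely what produces the $\frac{2N}{\theta}$ in the stated bound on $T_0$).
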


\begin{proof}

\textit{First step:} Let
		\begin{align}
		\label{eq:Ybar}
		 \overline{Y} = 2Y(0) + \frac{2 N^2}{\theta^2}W(0)^2.
		\end{align}
We shall  prove the following implication for every $n \in \N$ such that $n\tau\leq \hat{T}$: if $\sqrt{W(m\tau)} > 2 \Delta$ for every $m = 0, \ldots, n$ and the subsequent assumptions $P_1(n)$, $P_2(n)$, and $P_3(n)$ depending on $n$ hold
\begin{enumerate}[label=$P_\arabic*(n)$]
\item \label{P1}
: For $t \in [0, n\tau)$ it holds
	\begin{align*}
			W^{\prime}(t) & \leq - \frac{ \theta}{N} \sqrt{W(t)} < 0, \\
			V^{\prime}(t) & \leq - \frac{c \theta}{N} \sqrt{V(t)} < 0;
		\end{align*}
\item \label{P2} : $Y(t) \leq \overline{Y}$ and $X(t) \leq \overline{X}$ hold in $[0,n\tau]$;
\item \label{P3} : It holds 
\begin{align*}
\int_0^{n\tau} \sqrt{2V(s)} \ ds \leq \alpha,
\end{align*}
\end{enumerate}
then also $P_1(n+1)$, $P_2(n+1)$, and $P_3(n+1)$ hold true.

So let us assume $\sqrt{W(m\tau)} > 2 \Delta$ for every $m = 0, \ldots, n$, which means that $T_0\geq (n+1)\tau$ by definition of $T_0$, and assume \ref{P1}, \ref{P2}, and \ref{P3}. We begin by computing the derivative of $V$ and $W$ for $t \in [n\tau, (n+1)\tau]$:
		\begin{align*}
			V^{\prime}(t) & = \frac{d}{dt}B(v(t), v(t)) \\
			& = 2 B(\dot{v}(t), v(t)) \\
			& \leq 2 B(u^{h}(n\tau), v(t)).
		\end{align*}
		The same computation yields
		\begin{align*}
			W^{\prime}(t) \leq 2 B(u^{\ell}(n\tau), w(t)).
		\end{align*}
		By definition, $u_{\maxindex}^{\ell}(n\tau) = - \theta w^{\perp}_{\maxindex}(n\tau)/\|w^{\perp}_{\maxindex}(n\tau)\|$ where $\maxindex$ is the smallest index such that $\|w^{\perp}_{\maxindex}(n\tau)\| \geq \|w^{\perp}_j(n\tau)\|$ for all $j = 1, \ldots, N$, and $u_j^{\ell}(n\tau) = 0$ for every $j \not = {\maxindex}$. Then $u_{\maxindex}^{h}(n\tau) = - \theta v^{\perp}_{\maxindex}(n\tau)/\|v^{\perp}_{\maxindex}(n\tau)\|$ and $u_j^{h}(n\tau) = 0$ for every $j \not = i$. So we have
		\begin{align}
		\begin{split} \label{eq:CSCS_estimatederivativeVWn}
			V^{\prime}(t) \leq - \frac{2 \theta}{N} \phi^{h}(t), \\
			W^{\prime}(t) \leq - \frac{2 \theta}{N} \phi^{\ell}(t),
		\end{split}
		\end{align}
		where
		\begin{align*}
			\phi^{h}(t) = \frac{\langle v^{\perp}_{\maxindex}(n\tau), v^{\perp}_{\maxindex}(t) \rangle}{\|v^{\perp}_{\maxindex}(n\tau)\|}, \\
			\phi^{\ell}(t) = \frac{\langle w^{\perp}_{\maxindex}(n\tau), w^{\perp}_{\maxindex}(t) \rangle}{\|w^{\perp}_{\maxindex}(n\tau)\|}.
		\end{align*}
As we want to prove that $P_1(n+1)$ holds, we need to deduce suitable lower bounds on $\phi^{h}(t)$ and $\phi^{\ell}(t)$ to estimate the right-hand side of \eqref{eq:CSCS_estimatederivativeVWn}. To this purpose we need first do derive auxiliary bounds on the growth of $\sqrt{V(t)}$ and $\sqrt{W(t)}$, see formula \eqref{eq:CSCS_estimateVWn} below: The general estimate
		\begin{align*} 
			\frac{\langle a, b_i \rangle}{\|a\|} \leq \|b_i\| \leq \sqrt{N} \sqrt{B}
		\end{align*}
		with arbitrary vectors $a$ and $b_1, \ldots, b_N$, and $B = \frac{1}{N} \sum^N_{i = 1} \|b_i\|^2$ yields
		\begin{align*}
			|\phi^{\ell}(s)| \leq \sqrt{N} \sqrt{W(s)}, \quad |\phi^{h}(s)| \leq \sqrt{N} \sqrt{V(s)}
		\end{align*}
		for all $s \in [n\tau,(n+1)\tau]$. We use these bounds to estimate the right-hand side of \eqref{eq:CSCS_estimatederivativeVWn} 
		\begin{align*}
			\left(\sqrt{V}\right)'(s)&= \frac{V'(s)}{2\sqrt{V(s)}} \leq  \frac{\theta}{\sqrt{N}}, \\
\left(\sqrt{W}\right)'(s)&= \frac{W'(s)}{2\sqrt{W(s)}} \leq  \frac{\theta}{\sqrt{N}}.
		\end{align*}
		An integration between $n\tau$ and $s \in [n\tau,(n+1)\tau]$ yields
		\begin{align}
		\begin{split} \label{eq:CSCS_estimateVWn}
			\sqrt{W(s)} \leq \sqrt{W(n\tau)} + (s - n\tau)\frac{\theta}{\sqrt{N}} \leq \sqrt{W(n\tau)} + \tau \frac{\theta}{\sqrt{N}}, \\
			\sqrt{V(s)} \leq \sqrt{V(n\tau)} + (s - n\tau)\frac{\theta}{\sqrt{N}} \leq \sqrt{V(n\tau)} + \tau \frac{\theta}{\sqrt{N}}.
		\end{split}
		\end{align}
		With the help of \eqref{eq:CSCS_estimateVWn} we now work out lower bounds for $\phi^{\ell}$ and $\phi^{h}$. It holds
		\begin{align}
		\begin{split} \label{eq:CSCS_philown}
			\phi^{\ell}(t) & = \frac{\langle w^{\perp}_{\maxindex}(n\tau), w^{\perp}_{\maxindex}(t) \rangle}{\|w^{\perp}_{\maxindex}(n\tau)\|} \\
			& = \frac{\langle w^{\perp}_{\maxindex}(n\tau), w^{\perp}_{\maxindex}(n\tau) \rangle}{\|w^{\perp}_{\maxindex}(n\tau)\|} - \frac{\langle w^{\perp}_{\maxindex}(n\tau), w^{\perp}_{\maxindex}(n\tau) - w^{\perp}_{\maxindex}(t) \rangle}{\|w^{\perp}_{\maxindex}(n\tau)\|} \\
			& \geq \|w^{\perp}_{\maxindex}(n\tau)\| - \|w^{\perp}_{\maxindex}(n\tau) - w^{\perp}_{\maxindex}(t)\| \\
			& \geq \|w^{\perp}_{\maxindex}(n\tau)\| - \int^t_{n\tau}\|\dot{w}^{\perp}_{\maxindex}(s)\| \ ds.
			\end{split}
			\end{align}
			We now estimate the integrand. From 
			\begin{align*}
				\dot{w}^{\perp}_{\maxindex}(t)&=\frac{1}{N}\sum_{j=1}^N a(\|y_j(t)-y_{\maxindex}(t)\|)(w^{\perp}_{j}(t)-w^{\perp}_{\maxindex}(t))+ u_{\maxindex}^{\ell}(n\tau)-\frac{1}{N}\sum_{j=1}^N u_j^{\ell}(n\tau) \\
				&=\frac{1}{N}\sum_{j=1}^N a(\|y_j(t)-y_{\maxindex}(t)\|)(w^{\perp}_{j}(t)-w^{\perp}_{\maxindex}(t))- \theta \cdot \frac{N-1}{N} \cdot \frac{w^{\perp}_{\maxindex}(n\tau)}{\|w^{\perp}_{\maxindex}(n\tau)\|}
			\end{align*}		
			and the inequality			
			\begin{align*}
				\frac{1}{N}\sum_{j =1}^N \|w^{\perp}_{j}-w^{\perp}_{\maxindex}\|&\leq \left(\frac{1}{N}\sum_{j=1}^N \|w^{\perp}_{k}-w^{\perp}_{\maxindex}\|^2\right)^{\frac{1}{2}}	\leq  \left(\frac{1}{N}\sum_{j=1}^N \sum_{j'=1}^{j-1} \|w^{\perp}_{j'}-w^{\perp}_{j}\|^2\right)^{\frac{1}{2}}=\left(\frac{1}{2N}\sum_{j,j'=1}^N \|w^{\perp}_{j}-w^{\perp}_{j'}\|^2\right)^{\frac{1}{2}} \\
				&= \sqrt{N}\sqrt{W}
			\end{align*}										
			it follows
			\begin{align*}
			\|\dot{w}^{\perp}_{\maxindex}(s)\| & \leq a(0)\sqrt{N}\sqrt{W(s)} + \theta \frac{N-1}{N} \quad\text{for all}\quad s \in [n\tau,(n+1)\tau).
		\end{align*}
		Using \eqref{eq:CSCS_estimateVWn} we get
		\begin{align*}
			\|\dot{w}^{\perp}_{\maxindex}(s)\| & \leq a(0)\sqrt{N}\left(\sqrt{W(n\tau)} + \tau \frac{\theta}{\sqrt{N}}\right) + \theta.
		\end{align*}
		Plugging the last inequality into \eqref{eq:CSCS_philown} we deduce
		\begin{align*}
		\phi^{\ell}(t) & \geq \|w^{\perp}_{\maxindex}(n\tau)\| - \tau \left(a(0)\sqrt{N}\left(\sqrt{W(n\tau)} + \tau \frac{\theta}{\sqrt{N}}\right) + \theta\right) \\
		& \geq \|w^{\perp}_{\maxindex}(n\tau)\| - \tau \left(a(0)\sqrt{N}\sqrt{W(n\tau)} + \theta \right) - \tau^2 a(0)\theta.
		\end{align*}
		The same calculations give us
		\begin{align*}
		\phi^{h}(t)  & \geq \|v^{\perp}_{\maxindex}(n\tau)\| - \tau \left(a(0)\sqrt{N}\sqrt{V(n\tau)} + \theta \right) - \tau^2 a(0)\theta.
		\end{align*}
		Together with \eqref{eq:CSCS_estimatederivativeVWn} this yields
		\begin{align}		
			\label{eq:West} W^{\prime}(t) & \leq - \frac{2 \theta}{N} \left( \|w^{\perp}_{\maxindex}(n\tau)\| - \tau \left(a(0)\sqrt{N}\sqrt{W(n\tau)} + \theta \right) - \tau^2 a(0)\theta\right) \\		
			\label{eq:Vest} 	V^{\prime}(t) & \leq - \frac{2 \theta}{N} \left( \|v^{\perp}_{\maxindex}(n\tau)\| - \tau \left(a(0)\sqrt{N}\sqrt{V(n\tau)} + \theta \right) - \tau^2 a(0)\theta \right).
		\end{align}
		By the assumption on $\tau\leq \tau_0$ in \eqref{eq:CSCS_tausmall} and by  assumption \ref{item:JL3} we have
		\begin{align*}
		\tau \left(a(0)\sqrt{N}\sqrt{W(0)}+\theta\right) + \tau^2 a(0)\theta &\leq \tau \left(2a(0)\sqrt{N}\sqrt{V(0)}+\theta\right) + \tau^2 a(0)\theta \leq \frac{\Delta}{2}\leq \Delta. 
		\end{align*}	
		Applying this and the fact that $W$ is decreasing in $[0, n\tau]$, which follows from \ref{P1}, we use \eqref{eq:West} to deduce the following upper bound
		\begin{align*}
			W^{\prime}(t) & \leq - \frac{2 \theta}{N} \left(\|w^{\perp}_{\maxindex}(n\tau)\| - \tau \left(a(0)\sqrt{N}\sqrt{W(n\tau)} + \theta \right) - \tau^2 2a(0)\theta \right)\\
			& \leq - \frac{2 \theta}{N} \left(\sqrt{W(n\tau)} - \tau \left(a(0)\sqrt{N}\sqrt{W(0)} + \theta \right) - \tau^2 2a(0)\theta \right)\\
			& \leq - \frac{2 \theta}{N} \left(\sqrt{W(n\tau)} - \Delta \right).
		\end{align*}		
		Since we assumed that $\sqrt{W(n\tau)} > 2 \Delta$, this shows that $W$ is decreasing on $[n\tau,(n+1)\tau]$. Additionally, using this former assumption we also can estimate
 		\begin{align*}			
			W^{\prime}(t) & \leq - \frac{2 \theta}{N} \left(\sqrt{W(n\tau)} - \Delta \right) \\
			&\leq - \frac{\theta}{N} \sqrt{W(n\tau)}\\
			& \leq - \frac{\theta}{N} \sqrt{W(t)}
		\end{align*}
		for all $t \in [n\tau,(n+1)\tau]$. Together with \ref{P1} this shows the stated assertion for $W'(t)$ in $P_1(n+1)$. 
		
		In order to conclude the statement of \ref{P1} for $V'(t)$  we need to take advantage of the estimates of the lower dimensional dynamics, of Proposition \ref{prop:diff_eq_est}, and Lemma \ref{le:CSCS_technical}. By assumption \ref{P3} it holds that
	\begin{align*}
\int_0^{n\tau} \sqrt{2V(s)} \ ds \leq \alpha.
\end{align*}		
Thus, by the choice of $\eps'$ in \eqref{eq:epssuperest} and the assumptions \ref{item:JL1}, \ref{item:JL2}, and \ref{item:JL3}, the hypotheses of Proposition \ref{prop:diff_eq_est} are fulfilled in the interval $[0, n\tau]$ - since $n\tau \leq T_0$ by definition of $T_0$ as the time where we switch the control to $0$. Hence \eqref{eq:est_contr} holds and it follows
\begin{align*} 
		\|Mv^{\perp}_{i}(n\tau) - w^{\perp}_{i}(n\tau)\| &\leq \|Mv_{i}(n\tau) - w_{i}(n\tau)\|+\frac{1}{N} \sum_{j=1}^N \|Mv_{j}(n\tau) - w_{j}(n\tau)\|\leq 2\cal{E}^{v}(n\tau) \leq \Delta.
\end{align*}
This estimate and  assumption \ref{item:JL2} allow us to use Lemma \ref{le:CSCS_technical} for the vectors $a_i = v_i^{\perp}(n\tau)$ and $b_i = w^{\perp}_i(n\tau)$. Together with $\sqrt{W(n\tau)} > 2\Delta$ this results in
		\begin{align*}
		  \|v^{\perp}_{\maxindex}(n\tau)\| \geq \frac{1}{4}\|w^{\perp}_{\maxindex}(n\tau)\|, \qquad \|v^{\perp}_{\maxindex}(n\tau)\| \geq \frac{1}{4} \sqrt{W(n\tau)} \geq \frac{\Delta}{2}, \qquad 
		  \|v^{\perp}_{\maxindex}(n\tau)\| \geq c \sqrt{V(n\tau)}.
		\end{align*}
By assumption \ref{P1} we know that $V$ is decreasing in $[0,n\tau]$. Using the estimate \eqref{eq:Vest} together with the choice of $\tau\leq \tau_0$ in \eqref{eq:CSCS_tausmall} we obtain 
			\begin{align*}
			\begin{split}
			V^{\prime}(t) & \leq - \frac{2 \theta}{N} \left( \|v^{\perp}_{\maxindex}(n\tau)\| - \tau \left(a(0)\sqrt{N}\sqrt{V(n\tau)} + \theta \right) - \tau^2 a(0)\theta \right) \\
			& \leq - \frac{2 \theta}{N} \left( \|v^{\perp}_{\maxindex}(n\tau)\| - \tau \left(a(0)\sqrt{N})\sqrt{V(0)} + \theta \right) - \tau^2 a(0)\theta \right) \\
			& \leq - \frac{2\theta}{N} \left( \|v^{\perp}_{\maxindex}(n\tau)\|  - \frac{\Delta}{4} \right) \\
			& \leq - \frac{\theta}{N}  \|v^{\perp}_{\maxindex}(n\tau)\| \\
			& \leq - \frac{c \theta}{N} \sqrt{V(n\tau)}
			\end{split}
		\end{align*}
		for all $t \in [n\tau,(n+1)\tau]$. This shows that also $V$ is decreasing in $[n\tau,(n+1)\tau]$ and hence
		\begin{align*}	
			V^{\prime}(t) \leq - \frac{c \theta}{N} \sqrt{V(n\tau)}\leq - \frac{c \theta}{N} \sqrt{V(t)}
		\end{align*}
		for all $t\in [n\tau,(n+1)\tau]$. Together with \ref{P1} this finishes the proof of $P_1(n+1)$.
		
		We can now use Lemma \ref{le:tech_up} with $\eta= \frac{\theta}{N}$ and \ $\eta= \frac{c \theta}{N}$ to get the following estimates for $Y(t)$ and \ $X(t)$, respectively,
		\begin{align*}
			Y(t) \leq \overline{Y} \mbox{ and } X(t) \leq \overline{X} \text{ for all } t \in [0, (n+1)\tau]
		\end{align*}
	     with $\overline{X}$ as defined in \eqref{eq:Xbar} and $\overline{Y}$ as defined in \eqref{eq:Ybar}. This shows $P_2(n+1)$. Furthermore, $P_1(n+1)$ yields by integration 
	       \begin{align*}
\int_0^{(n+1)\tau} \sqrt{2V(s)} \ ds \leq -\frac{\sqrt{2}N}{c\theta} \int_0^{(n+1)\tau} V'(s) \ ds = \frac{\sqrt{2}N}{c\theta}  \left(V(0)-V((n+1)\tau)\right) \leq \alpha V(0)
\end{align*}
with $\alpha=\frac{\sqrt{2}N}{c\theta}$. Hence, under the assumptions \ref{P1}, \ref{P2}, and \ref{P3} we have shown $P_1(n+1)$, $P_2(n+1)$ as well as $P_3(n+1)$, provided that $\sqrt{W(m\tau)} > 2 \Delta$ for every $m = 0, \ldots, n$, and thus completed the first step.

\textit{Second step:} In the second step we shall prove that there exists an $n^* \in \N_0$ such that $n^*\tau \leq \hat{T}+\tau$ and $\sqrt{W(n^*\tau)} \leq 2 \Delta$ holds, where $\hat{T}$ is defined as in \eqref{eq:That}. By definition of the threshold $\Gamma=(2 \Delta)^2$, this implies the switching of the control to $0$ at time $n^*\tau$.  Assume on the contrary that 
\begin{align}
\label{eq:contra}
 \sqrt{W((n+1)\tau)} > 2 \Delta
\end{align}
for all $n \in \N_0$ with $n\tau \leq \hat{T}$. In the first step we showed that this yields in particular for $t \in [0,(n+1)\tau)$ the estimates
\begin{align*}
	W^{\prime}(t) \leq - \frac{\theta}{N} \sqrt{W(t)}< 0 \text{ and } \sqrt{W(t)}>2\Delta.
\end{align*}
Hence for all $t \in [0,(n+1)\tau)$ it holds
\begin{align*}
\sqrt{W(t)} &\leq \sqrt{W(0)} + t \cdot \sup_{\xi \in (0,(n+1)\tau)} \left(\sqrt{W}\right)'(\xi)=\sqrt{W(0)} + t \cdot \sup_{\xi \in (0,(n+1)\tau)} \frac{W'(\xi)}{2\sqrt{W(\xi)}} \\
&\leq \sqrt{W(0)} - t \cdot \frac{\theta}{2N}.
\end{align*}
Taking $n_0 \in  \N_0$ such that $n_0\tau \leq \hat{T}<(n_0+1)\tau$ and using \ref{item:JL3} we have
\begin{align}
\label{eq:WTime}
\begin{split}
\sqrt{W((n_0+1)\tau)} \leq \sqrt{W(\hat{T})} \leq \sqrt{W(0)}- \hat{T}\cdot \frac{\theta}{2N}  &= \sqrt{W(0)}-  \frac{2N}{\theta} \left( 2\sqrt{V(0)} -2 \Delta\right) \frac{\theta}{2N} \\
&\leq \sqrt{W(0)}-  \frac{2N}{\theta} \left(\sqrt{W(0)} -2 \Delta\right) \frac{\theta}{2N} \\
&\leq 2\Delta.
\end{split}
\end{align}
This contradicts assumption \eqref{eq:contra}. Thus there exists an $n^* \in \N_0$ such that $n^*\tau \leq \hat{T}+\tau$ and it holds
\begin{align}
\label{eq:consesusRegionBoth}
			\sqrt{W(n^*\tau)} \leq 2 \Delta.
		\end{align}
		
\textit{Third step:} We shall show that \eqref{eq:consesusRegionBoth} implies that the trajectories of both the low- and high-dimensional systems are in the consensus region identified by Lemma \ref{le:HaHaKim} at time $n^*\tau$, i.e.,
	\begin{align*}
		\sqrt{W(n^*\tau)} \leq \gamma(Y(n^*\tau)) \quad \text{ and } \quad \sqrt{V(n^*\tau)} \leq \gamma(X(n^*\tau)).
	\end{align*}
We shall start considering the low-dimensional system. Since by  $\ref{item:JL3}$ it holds
\begin{align*}
Y(0) \leq (1+1/2)^2 \cdot X(0) \leq 4X(0), \quad W(0) \leq 4 V(0)
\end{align*}
and by the fact that the constant $c$ from Lemma \ref{le:CSCS_technical} is smaller than $1$, we can estimate $\overline{Y}=2Y(0) + \frac{2 N^2}{\theta^2}W(0)^2$ from below by $4\overline{X}$, where $\overline{X}= 2X(0) + \frac{2 N^2}{c^2 \theta^2}V(0)^2$. This together with \eqref{eq:consesusRegionBoth}, the definition of $\Delta$ in \eqref{eq:Delta}, and $P_2(n^*)$ lead to
	\begin{align*}
		\sqrt{W(n^*\tau)} \leq 2\Delta \leq \gamma\left(4\overline{X}\right) \leq \gamma(\overline{Y}) \leq \gamma(Y(n^*\tau)).
		\end{align*}
It remains to prove that the high-dimensional system is in the consensus region identified by Lemma \ref{le:HaHaKim}. Again, the conditions of Lemma \ref{le:CSCS_technical} for the vectors $a_i = v_i^{\perp}(n^*\tau)$ and $b_i = w^{\perp}_i(n^*\tau)$ are fulfilled: as in the first step we have by Proposition \ref{prop:diff_eq_est}
\begin{align*}
\|Mv^{\perp}_{i}(n^*\tau) - w^{\perp}_{i}(n^*\tau)\| &\leq \Delta,  
\end{align*}
and  property $\ref{item:JL2}$ holds at $n^*\tau$. Thus, an application of Lemma \ref{le:CSCS_technical} shows
\begin{align*}
\sqrt{V(n^*\tau)} \leq C \Delta.
\end{align*}		
Hence the definition of $\Delta$ in \eqref{eq:Delta} and $P_2(n^*)$ yield
\begin{align*}
\sqrt{V(n^*\tau)} \leq C \Delta \leq \gamma(\overline{X}) \leq \gamma(X(n^*\tau)).
\end{align*}
We conclude that both the trajectories of the systems are in the consensus region at time $n^*\tau$. By Lemma \ref{le:HaHaKim} we are allowed to switch the control to $0$ and both systems tend to consensus autonomously.

\textit{Fourth step:} In the second and third steps we have proven that both systems enters the consensus region at time $T_0=n^*\tau$, where $n^*\tau \leq \hat{T}+\tau$. By the computations in \eqref{eq:WTime}, we have the following estimate 
\begin{align*}
 		T_0\leq \frac{2N}{\theta} \left( \sqrt{W(0)} - 2\Delta\right)+\tau\leq \hat{T}+\tau.
\end{align*}
Moreover, by $P_2(n^*)$ we have 
\begin{align*}
\|x_i(t) - x_j(t)\|^2 & \leq 2\|x_i(t) - \overline{x}(t)\|^2 + 2\|x_j(t) - \overline{x}(t)\|^2  \leq 4 N X(t) \leq 4 N \overline{X} \quad \text{for} \quad t \in [0,n^*\tau],
\end{align*}
and from $P_1(n^*)$ it follows
\begin{align*}
\|v_i(t) - v_j(t)\|^2 & \leq 2\|v_i(t) - \overline{v}(t)\|^2 + 2\|v_j(t) - \overline{v}(t)\|^2  \leq 4 N V(t) \leq 4 N V(0) \quad \text{for} \quad t \in [0,n^*\tau].
\end{align*}
This shows \eqref{eq:M} and the proof is concluded.
\end{proof}
	

\section{How to find a Johnson--Lindenstrauss matrix}
\label{sec:JLL_matrix}

The main ingredient of Proposition \ref{prop:diff_eq_est} and Theorem \ref{eq:convergence} is the existence of a Johnson--Lindenstrauss matrix $M\in \R^{k\times d}$ for the trajectories. Let $\Delta$ and $\eps'$ be as in Theorem \ref{eq:convergence} and let us recall what we explicitly needed. Assume that $\hat{T}$ is an upper estimate for $T_0$, the time to switch off the control. Then we need to define a matrix $M\in \R^{k\times d}$ such that the following properties hold:
\begin{enumerate}[label=(JL\arabic*)]
\item
\label{it:JLLatt1}
Let $\eps=\eps'$ and $\delta = \frac{\sqrt{2}}{2} \eps'\left(\sqrt{X(0)}+\frac{N}{c\theta}\right)$. For all $t \in [0,\hat{T}+\tau]$ and $i,j\in \{1,\ldots,N\}$ either we have
\begin{align*} 
(1-\eps) \|x_i(t)-x_j(t)\| \leq \|M(x_i(t)-x_j(t))\| \leq (1+\eps) \|x_i(t)-x_j(t)\|
\end{align*} 
or
\begin{align*}
\|x_i(t)-x_j(t)\| \leq \delta \text{ and } \|M(x_i(t)-x_j(t))\| \leq \delta.
\end{align*}
\item
\label{it:JLLatt2}
Let $\eps=\eps'$ and $\delta=\Delta$. For all $n=0,\ldots,\lfloor \frac{T}{\tau}\rfloor+1 $ and $i\in \{1,\ldots,N\}$ either we have
\begin{align*} 
(1-\eps) \|v_i(n\tau)-\overline{v}(n\tau)\| \leq \|M(v_i(n\tau)-\overline{v}(n\tau))\| \leq (1+\eps) \|v_i(n\tau)-\overline{v}(n\tau)\|
\end{align*} 
or
\begin{align*}
\|v_i(n\tau)-\overline{v}(n\tau)\| \leq \delta \text{ and } \|M(v_i(n\tau)-\overline{v}(n\tau))\| \leq \delta.
\end{align*}
\item
\label{it:JLLat0}
Let $\eps=1/2$. Then for all $i \in \{1,\ldots,N\}$ we have
\begin{align*}
(1-\eps) \|v_i(0)-\overline{v}(0)\| \leq \|M(v_i(0)-\overline{v}(0))\| \leq (1+\eps) \|v_i(0)-\overline{v}(0)\|
\end{align*}
and 
\begin{align*}
(1-\eps) \|x_i(0)-\overline{x}(0)\| \leq \|M(x_i(0)-\overline{x}(0))\| \leq (1+\eps) \|x_i(0)-\overline{x}(0)\|.
\end{align*}
\end{enumerate}
In order to prove conditions \ref{it:JLLatt2} and \ref{it:JLLat0} one can directly invoke the Johnson--Lindenstrauss Lemma as discussed in Remark \ref{rm:numberofpoints} while for \ref{it:JLLatt1} one can use its continuous version, Lemma \ref{le:JLL_cont}, which boils down again to the application of the Johnson--Lindenstrauss Lemma on points sampled from the trajectories.

However, the Johnson--Lindenstrauss Lemma applies on points which are fixed a priori before generating randomly the matrix $M \in \R^{k \times d}$. At a first look, due to the fact that the high-dimensional controls depend on the low-dimensional ones, which depend on the matrix $M$, the points on which we apply the Johnson--Lindenstrauss Lemma may be seen as directly depending on $M$ as well. 

In order to resolve this apparent paradox, we want to clarify that actually, due to the finite number of sampling times of the control and the finite number of agents, the number of possible realizable trajectories, and consequently the number of possible sampling points for the Johnson--Lindenstrauss Lemma, is finite and, actually, \emph{independent} of the choice of the matrix $M$. Hence we are now left with the tasks of counting the number of such trajectories and of verifying that they fulfill the necessary Lipschitz continuity assumptions for applying Lemma \ref{le:JLL_cont}.

Let us state again that the lower dimension $k$ of $M \in \R^{k \times d}$ scales as
\begin{align}\label{kJL}
k \sim \eps^{-2} \log(\cal{N}),
\end{align}
where $\eps \in (0,1)$ is the allowed distortion and $\cal{N}$ is the number of sampling points on all possible trajectories.

We focus first in \eqref{kJL} on the dependence of $\eps = \min \{ \eps', \frac{1}{2} \}$ on $N$, the number of agents, and the dimension $d$. According to \eqref{eq:epssuperest} in Theorem \ref{eq:convergence} the estimate on $\eps'$ scales exponentially with $N$, i.e., $\eps' \lesssim e^{-N}$, since $\hat{T}$ scales (at least) linearly with $N$, see \eqref{eq:That} (for $\theta$ independent of $N$ and $d$).

The positive aspect is that the estimate for $\eps'$ does not involve the dimension $d$.

\vspace{0.5cm}

In order to compute $\cal{N}$ in \eqref{kJL} we need first of all to estimate the number of realizable trajectories. Since we are insisting on {\it sparse} controls acting at most on {\it one} agent at the time, at every switching time $n\tau$ with $n\tau \leq T_0$, i.e., as long as the control is not switched off, there are precisely only $N$ possible controls and hence $N$ possible branches of future developments of the trajectories. By Theorem \ref{eq:convergence} it holds $T_0\leq \hat{T}+\tau$ and thus we can estimate the number $P$ of possible paths by
\begin{align*}
 P \leq N^{\lfloor \frac{T}{\tau}\rfloor+1}.
\end{align*}
Surprisingly, accounting for all the possible future branching is  sufficient to show that actually we can already deterministically fix points a priory on which later apply an independently randomly drawn matrix!

\begin{enumerate}
\item In order to fulfill \ref{it:JLLatt1} for every possible trajectory, an application of Lemma \ref{le:JLL_cont} yields an estimate of the number of necessary sampling points 
\begin{align}
\label{eq:count}
\cal{N}_1 = P \cdot (\hat{T}+\tau) \cdot \binom{N}{2} \cdot 4 \cdot \frac{L_x \cdot (\sqrt{d}+2)}{\delta \eps},
\end{align}
where the factor $P \cdot (\hat{T}+\tau)$ accounts for the number of trajectories and their time length, the factor $\binom{N}{2}$ accounts for the number of space trajectory differences $x_i - x_j$, and $L_x$ is an upper estimate for the individual Lipschitz constant, given by an estimate similar to \eqref{eq:lipschitzX} and the result from Theorem \ref{eq:convergence} that $V$ is decreasing until $T_0$ as follows:
\begin{align*}
L_x=\max\left( L_{x_i-x_j}\left(0,T_0\right): i,j\in \{1,\ldots,N\}\right) \leq \sup_{t \in \left(0,T_0\right)} \sqrt{2NV(t)} \leq \sqrt{2NV(0)}.
\end{align*}
\item To fulfill \ref{it:JLLatt2} we shall now count the necessary sampling points at every switching time $n\tau$. For $n = 0$ we have to consider $N$ sampling points. For $n = 1$ there are already $N$ possible paths to take into account and hence we need to take $N^2=N \cdot N$ sampling points. 
Going on in this way, at time $n\tau$ we have $N^n$ possible outcomes of the dynamical system and hence we have to take $N^{n+1}=N^n \cdot N$ sampling points, as long as $n\tau \leq \hat{T}+\tau$. Summing up the number of sampling points, we conclude
\begin{align*}
\cal{N}_2=\sum_{n=0}^{\lfloor \frac{T}{\tau}\rfloor+1} N^{n+1} \leq N^{\lfloor \frac{T}{\tau}\rfloor+3}
\end{align*}
\item To fulfill \ref{it:JLLat0} we need only $\cal{N}_3=2N$ sampling points.
\end{enumerate}

Hence we can eventually estimate $\cal{N}$ from above by
\begin{align*}
\cal{N} \leq \cal{N}_1+\cal{N}_2+\cal{N}_3 = N^{\lfloor \frac{T}{\tau}\rfloor+1} \cdot (\hat{T}+\tau) \cdot \binom{N}{2} \cdot 4 \cdot \frac{\sqrt{2NV(0)} \cdot (\sqrt{d}+2)}{\delta \eps} + N^{\lfloor \frac{T}{\tau}\rfloor+3} +  2N  
\end{align*} 

Thus, we can choose the dimension $k$ of a Johnson--Lindenstrauss matrix $M \in \R^{k \times d}$ as
\begin{align*}
k \sim \eps^{-2} \cdot \log(\cal{N}) &\sim \eps^{-2} \left[ \left(\frac{\hat{T}}{\tau}+1\right) \cdot \log{N} + \log(\hat{T}+\tau) + \log d + \log V(0) + |\log(\delta \eps)| \right],
\end{align*}
where
\begin{align*}
\eps=\min\left(\eps',1/2\right) \quad \text{and} \quad \delta=\min\left(\Delta,\frac{\sqrt{2}}{2} \eps'\left(\sqrt{X(0)}+\frac{N}{c\theta}\right)\right).
\end{align*}
Since the estimate on $\eps'$ scales exponentially in $N$, i.e., $\eps' \lesssim e^{-N}$, the dimension $k$ grows exponentially in $N$. However, the positive aspect is that the estimate of $k$ only scales logarithmically with the dimension $d$. Hence we have shown that at least for very large dimension $d \gg1$ and relatively small number of agents $N$ our dimensionality reduction approach will pay-off. As we show in Section \ref{sec:numerics}, these theoretical bounds turn out to be by far over-pessimistic and, surprisingly, this method of dimensionality reduction for computing optimal controls can work effectively with lower dimensions $k$ conspicuously smaller than $d$. Moreover, we show below ways to circumvent the exponential dependency of $k$ with respect to $N$ at the cost of using sequences of Johnson--Lindenstrauss matrices, see Remark \ref{1wout} and  Remark \ref{2wout}.

\begin{Remark}
The $\log(d)$-dependency only comes into play when we derive \ref{it:JLLatt1} from Lemma \ref{le:JLL_cont}. One can actually use a similar argument as in Remark \ref{rem:noDinvolved} in order to get rid of this logarithmic dependency. We do not elaborate further on this issue which appears to us just a mere and perhaps unnecessary technicality at this point.
\end{Remark}

\begin{Remark}\label{1wout} 
We observed that at least in the worst-case scenario here considered, the dimension $k$ of the Johnson--Lindenstrauss matrix is blowing up exponentially with the number of agents $N$. A practical approach to circumvent this problem is to use not only one but a whole family of matrices $M_0,\ldots,M_{\ell}$. The matrix $M_0$ is used from time $0$ up to a certain time $t_0$ and thus only needs to fulfill the Johnson--Lindenstrauss property in this short time interval. At time $t_0$ a new matrix $M_1$ is chosen. We have to observe the positions as well as the consensus parameters in high-dimension and project the system  to low-dimension again, using $M_1$, at $t_0$. Then we use the new low-dimensional system to calculate the index of the control for the high-dimensional system from time $t_0$ up to time $t_1$, eventually we again repeat the procedure with a new matrix $M_2$ etc.

This approach has the advantage that it requires the Johnson--Lindenstrauss properties for $M_i$, $i=1,\ldots,\ell$, only for a short time interval. The disadvantage is that we have to observe the high-dimensional system and project it to low-dimension again at every time $t_i$, $i=0,\ldots,\ell-1$.
\end{Remark}

\begin{Remark}\label{2wout} 
There is additionally the possibility to get rid of the mutual dependency of the matrix and the points of the trajectories using another family of matrices.

First, we take a matrix $M_0$ having the Johnson--Lindenstrauss properties \ref{it:JLLatt2} at $t=0$ and \ref{it:JLLat0}. We compute the index $i_0$ of the control (as defined in Definition \ref{def:control}) at $t=0$ using the projection $M_0$. 

Then we choose a matrix $M_1$ having the Johnson--Lindenstrauss properties \ref{it:JLLatt1} for all $t \in [0,\tau)$, \ref{it:JLLatt2} at $t=\tau$, and \ref{it:JLLat0}. We compute the low-dimensional system using the projection $M_1$ in $[0,\tau]$ and let the control act on the agent $i_0$ calculated by $M_0$. This is the main trick of the procedure: The points of the high-dimensional system in $[0,\tau]$ are not influenced by the matrix $M_1$ and hence the mutual dependency is removed, which means that there is no need of considering all trajectories $P$ anymore, in contrast to \eqref{eq:count}. 

Now, from the low-dimensional system, computed by $M_1$ with control acting on $i_0$ in $[0,\tau]$, we choose the agent $i_1$ at $\tau$ on which the control will act in the next interval $[\tau,2\tau]$. 

This procedure can be carried on using a family of matrices $\{M_p: p=0,\ldots,\ell\}$ fulfilling the Johnson--Lindenstrauss properties \ref{it:JLLatt1} for all $t \in [0,p\tau)$, \ref{it:JLLatt2} at $t=p\tau$, and \ref{it:JLLat0}. The agent $i_p$ on which the control shall act in the interval $[p\tau,(p+1)\tau)$ is computed at $p\tau$ using the low-dimensional system projected by $M_p$, while the control acts on $j_{q}$ in $[q\tau,(q+1)\tau)$ for $q=0,\ldots p-1$. Therefore, in $[0,p\tau]$  the index of the controlled agent and hence the trajectories of the high-dimensional system are independent of $M_p$.
\end{Remark}

\section{Numerical experiments} \label{sec:numerics}
In the following section we shall present some numerical experiments to confirm the theoretical observation of the interplay between the Cucker--Smale system, the dimension reduction by a Johnson--Lindenstrauss matrix and the quality of the control chosen from the low-dimensional (projected) system as defined in Definition \ref{def:control}.

For every $\ell=0,1,\ldots$ we recursively solve the Cucker--Smale system with $x_1,\ldots,x_N \in \R^d$ and $v_1,\ldots,v_N \in \R^d$
\begin{align*} 
\left \{\begin{array}{lll}
\dot{x}_{i}(t) & = & v_{i}(t)\\
\dot{v}_{i}(t) & = & \frac{1}{N}\underset{j=1}{\overset{N}{\sum}}\frac{v_{j}(t)-v_{i}(t)}{\left(1+\left\| x_{i}(t)-x_{j}(t)\right\|^{2}\right)^{\beta}} + u_i(\ell\tau), \quad 
\end{array} t \in [\ell\tau,(\ell+1)\tau], \quad i=\{1,\ldots,N\} 
\right .
\end{align*}
numerically, using as the initial value $(x(\ell\tau),v(\ell\tau))$ the solution of the preceding interval $[(\ell-1)\tau,\ell\tau]$ and as the starting value for $\ell=0$ the given values $x(0)=x_0$ and $v(0)=v_0$. The experiments are implemented by using Matlab applying a Runge-Kutta method of order 4 solving the systems of ODEs with step width $\tau$. The following are the different control strategies $u_i$ we compare in our experiments:

\begin{itemize}
\item[(SP)] Sparse control implemented in the high-dimensional system: this is the sparse control strategy outlined in \cite[Definition 4]{CFPT14}. The control acts on the agent with consensus parameter farthest away from the mean consensus parameter as long as we are not in the consensus region given by Lemma \ref{le:HaHaKim}: for every $\ell \in \mathbb{N}_0$ let $\maxindex \in \{1,\ldots,N\}$ be the smallest index such that
\begin{align*}
\|v^{\perp}_{\maxindex}(\ell\tau)\| = \max_{1 \leq i \leq N} \|v^{\perp}_i(\ell\tau)\|,
\end{align*}
and define the control as
\begin{align*}
u_{\maxindex}(\ell\tau)= -\theta \frac{v^{\perp}_{\maxindex}(\ell\tau)}{\|v^{\perp}_{\maxindex}(\ell\tau)\|} \quad \text{ and } \quad u_i(\ell\tau) = 0 \quad\text{for every}\quad i \not = {\maxindex} \ 
\end{align*}
as long as $V(\ell\tau)>\gamma(X(\ell\tau))^2$. As soon as $V(n\tau)\leq\gamma(X(n\tau))^2$ is satisfied for some $n \in \mathbb{N}_0$, we set $T_0 := n\tau$ and the control to zero.

This control was shown to be {\it optimal} in the work \cite[Section 4]{CFPT14} in terms of maximizing the rate of convergence to the consensus region, and shall be therefore employed as a benchmark to test the effectiveness of the other controls.
\item[(U)] Uniform control: this control strategy acts on every agent simultaneously using a control pointing towards the mean consensus parameter with norm equal to $\theta/N$ as long as $V(\ell\tau)>\gamma(X(\ell\tau))^2$ . This means
 \begin{align*}
u_j(\ell\tau)= - \frac{\theta}{N} \frac{v^{\perp}_j(\ell\tau)}{\|v^{\perp}_j(\ell\tau)\|} \quad\text{for all}\quad j \in \{1,\ldots,N\}. 
\end{align*} 
Again, as soon as $V(n\tau)\leq\gamma(X(n\tau))^2$ is satisfied for some $n \in \mathbb{N}_0$, we set $T_0 := n\tau$ and the control to zero.
\item[(R)] Random sparse control: as long as $V(\ell\tau)>\gamma(X(\ell\tau))^2$, at every sampling time $\ell\tau$ we choose an index $j \in \{1,\ldots,N\}$ at random following a uniform distribution. Then we define the control as
\begin{align*}
u_j(\ell\tau)= -\theta \frac{v^{\perp}_j(\ell\tau)}{\|v^{\perp}_j(\ell\tau)\|} \quad \text{and} \quad u_i(\ell\tau) = 0 \quad\text{for every}\quad i \not = j .
\end{align*}
As in the above controls, as soon as $V(n\tau)\leq\gamma(X(n\tau))^2$ is satisfied for some $n \in \mathbb{N}_0$, we set $T_0 := n\tau$ and the control to zero.
\item[(DR)] Dimension reduction sparse control chosen by the low-dimensional projected system: here $u_i(\ell\tau)=u_i^h(\ell\tau)$ is defined as in Definition \ref{def:control}. In order to test the performance of this control, and to avoid the stability complications arising from finite precision approximation, we calculate the trajectories of both the high- and the low-dimensional system: if the high-dimensional system enters the consensus region first (i.e., $V(n\tau)\leq \gamma(X(n\tau))^2$ for some $n \in \mathbb{N}_0$), then we set the control to zero and $T_0 := n \tau$. Instead, if the system in low dimension reaches the consensus region first (i.e., $W(\ell\tau)\leq \gamma(Y(\ell\tau))^2$ for some $\ell \in \mathbb{N}_0$), then we switch the control for the high-dimensional system to the random sparse control strategy (R) until $V(n\tau)\leq \gamma(X(n\tau))^2$ is eventually satisfied for some $n \in \mathbb{N}_0$.
\end{itemize}
Notice that all the controls above are time sparse, and only the uniform control strategy (U) is not componentwise sparse.

\begin{Remark}
\label{rem:simplify}
The reasons for using random sparse control at the end phase of (DR) in the case that the low-dimensional system reaches the consensus region before the high-dimensional one are of numerical and computational nature. In fact, the step width $\tau$ computed in Theorem \ref{eq:convergence} to ensure convergence to the consensus region in finite time is often way too small, and in our numerical experiments we need to exceed it. Moreover, as soon as the high-dimensional system enters the consensus region, the difference between consensus parameters becomes so small to render, for such a large time step, the choice of the sparse control highly inaccurate, leading to inefficient chattering phenomena, without steering the high-dimensional system to consensus.

As an alternative, we employ the random sparse control as soon as the low-dimensional system has reached the consensus region (if this happens before the high-dimensional system does). This procedure has the advantage of always steering the system to the consensus region, and it only slightly affects the time that the high-dimensional system takes to reach the consensus region, since it is usually necessary for a very short time (provided that the dimension of the Johnson--Lindenstrauss matrix is sufficiently large).

\end{Remark}

\subsection{Content of the Numerics}
The following are the driving issues concerning the controls introduced above:
\begin{enumerate}
\item Does the control steer the system to the consensus region as defined in Definition \ref{def:consensus} in finite time?
\item How long does it take to steer the system to the consensus region? 
\end{enumerate}

In the following, for every experiment we fix the number of agents $N$, the dimension $d$, the control strength $\theta$, the power of the interaction kernel $\beta$ as in \eqref{csmod}, the step width $\tau$, and in particular the configuration $(x_0, v_0)$ at the beginning. We report the maximal step width $\tau_0$ (theoretically) allowed by formula \eqref{eq:CSCS_tausmall}, and the estimate from above for the time to consensus $\hat{T}$ (taken from Theorem \ref{eq:convergence}). 
We also report the quantity $V(0)-\gamma(X(0))^2$, accounting for the discrepancy of the original configuration from the consensus region.

For every configuration we shall present 
a table containing the performances of the different controls, measured by the time employed by the high-dimensional system to reach the consensus region $T_0$, and the time  $T_{0.5}$ it takes to halve the ``distance'' to the consensus region: this means that $T_{0.5}$ is the minimal time satisfying
\begin{align*}
V(T_{0.5})-\gamma(X(T_{0.5}))^2 \leq 1/2 \cdot \left( V(0)-\gamma(X(0))^2\right).
\end{align*}

To test the performances of the control (DR) we shall use a variety of Bernoulli random matrices $M \in \R^{k \times d}$ for different dimensions $k$. For any of these dimensions, we also report the initial discrepancy $W(0)-\gamma(Y(0))^2$ from the consensus region of the projected system, and the switching time $T_S$ at which the random sparse control replaces the original dimension reduction control strategy (if the high-dimensional system enters the consensus region before the low-dimensional one, we set $T_S := T_0$).

Figure \ref{inconf} shows the first two coordinates of the initial configurations used in each section.

\begin{figure}[H]
\centering
\includegraphics[scale=0.23]{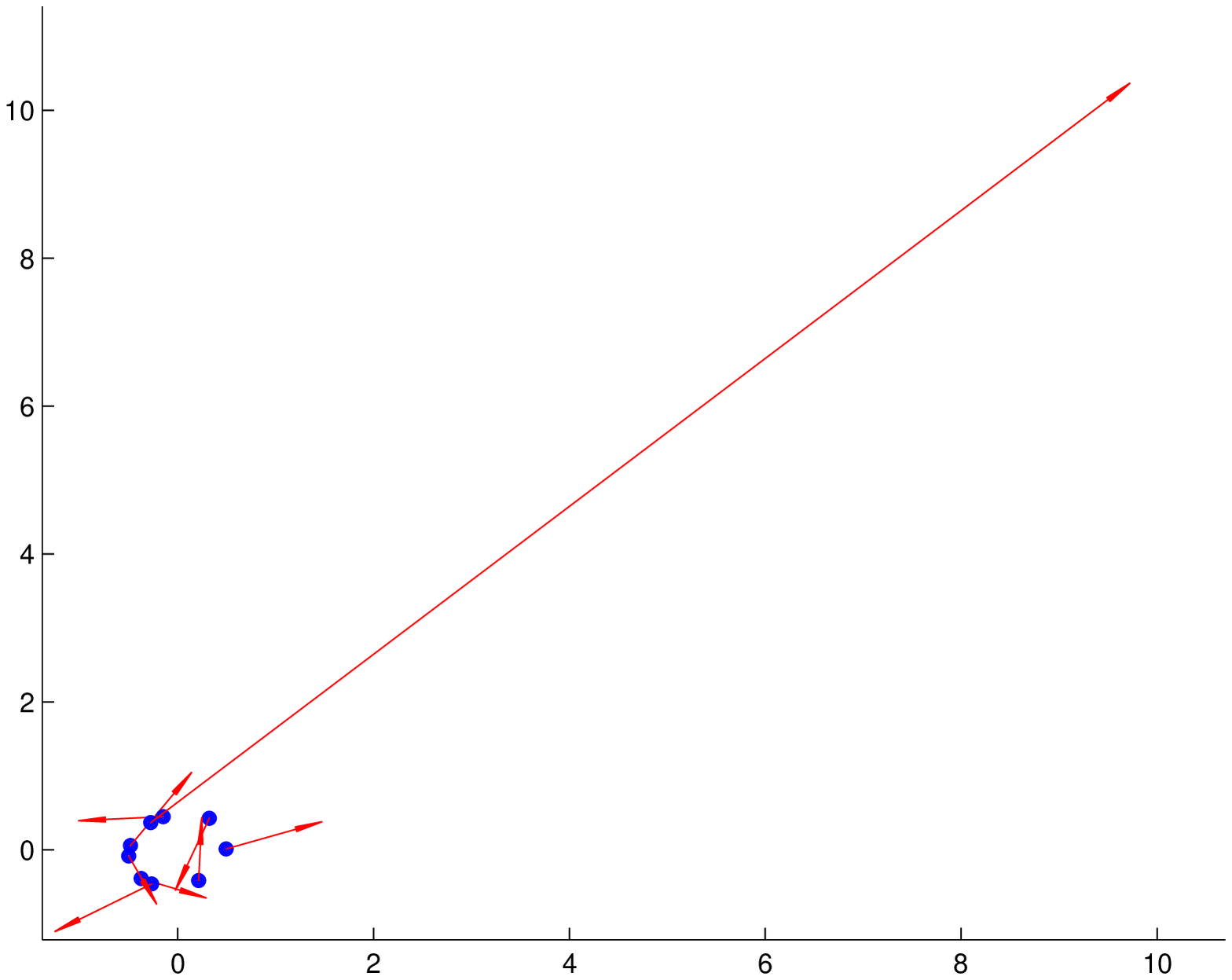}
\includegraphics[scale=0.23]{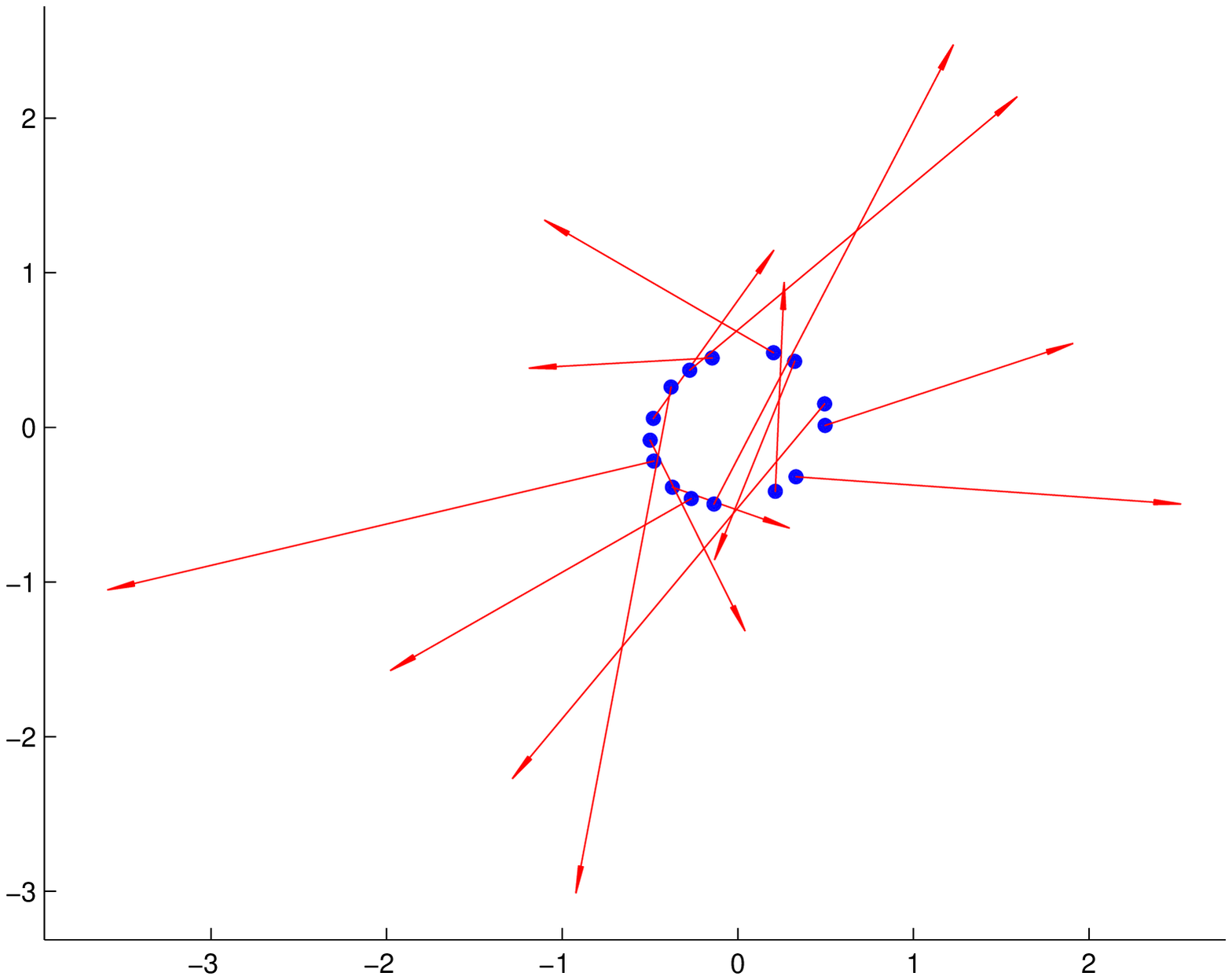}
\includegraphics[scale=0.23]{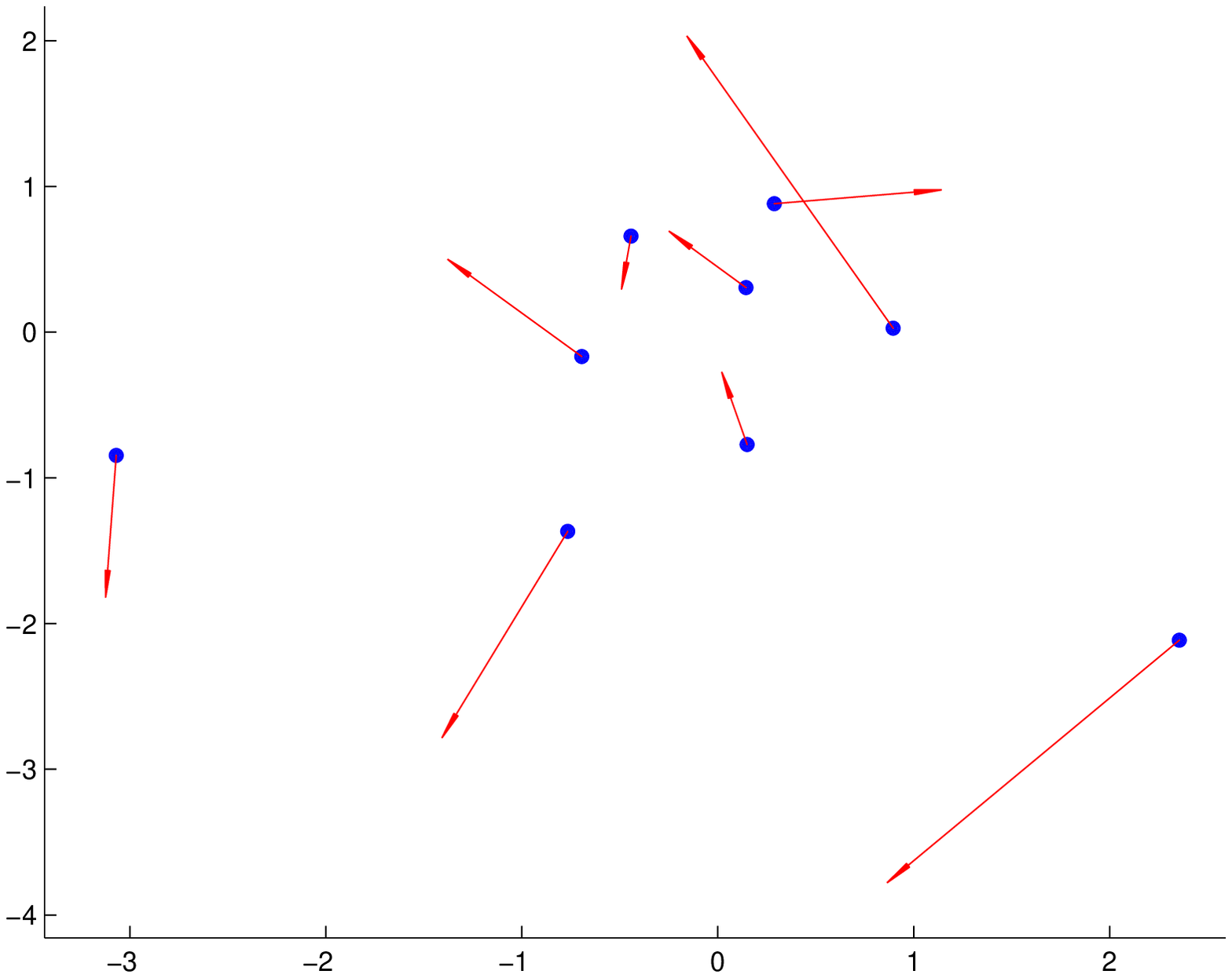}
\includegraphics[scale=0.23]{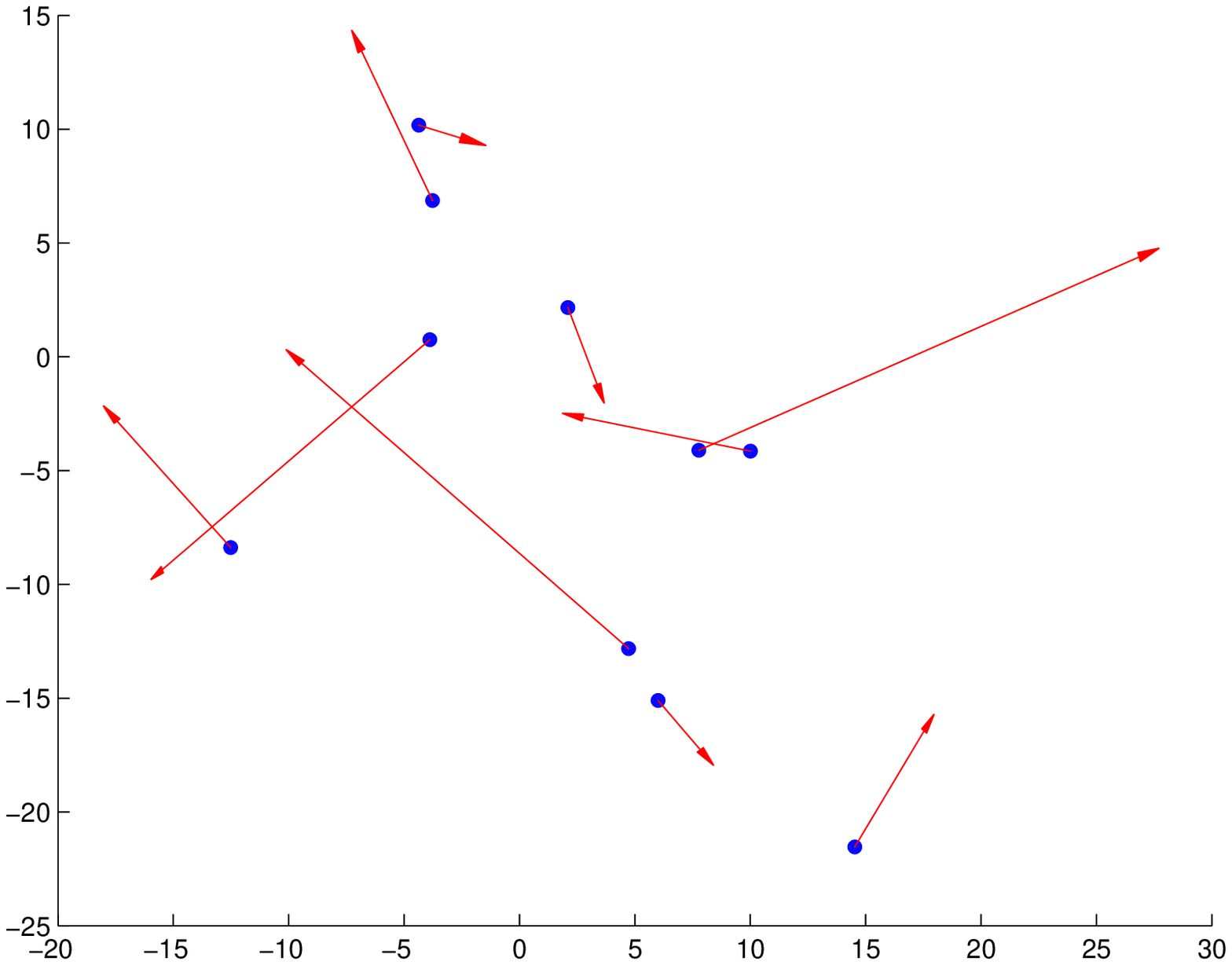}
\includegraphics[scale=0.23]{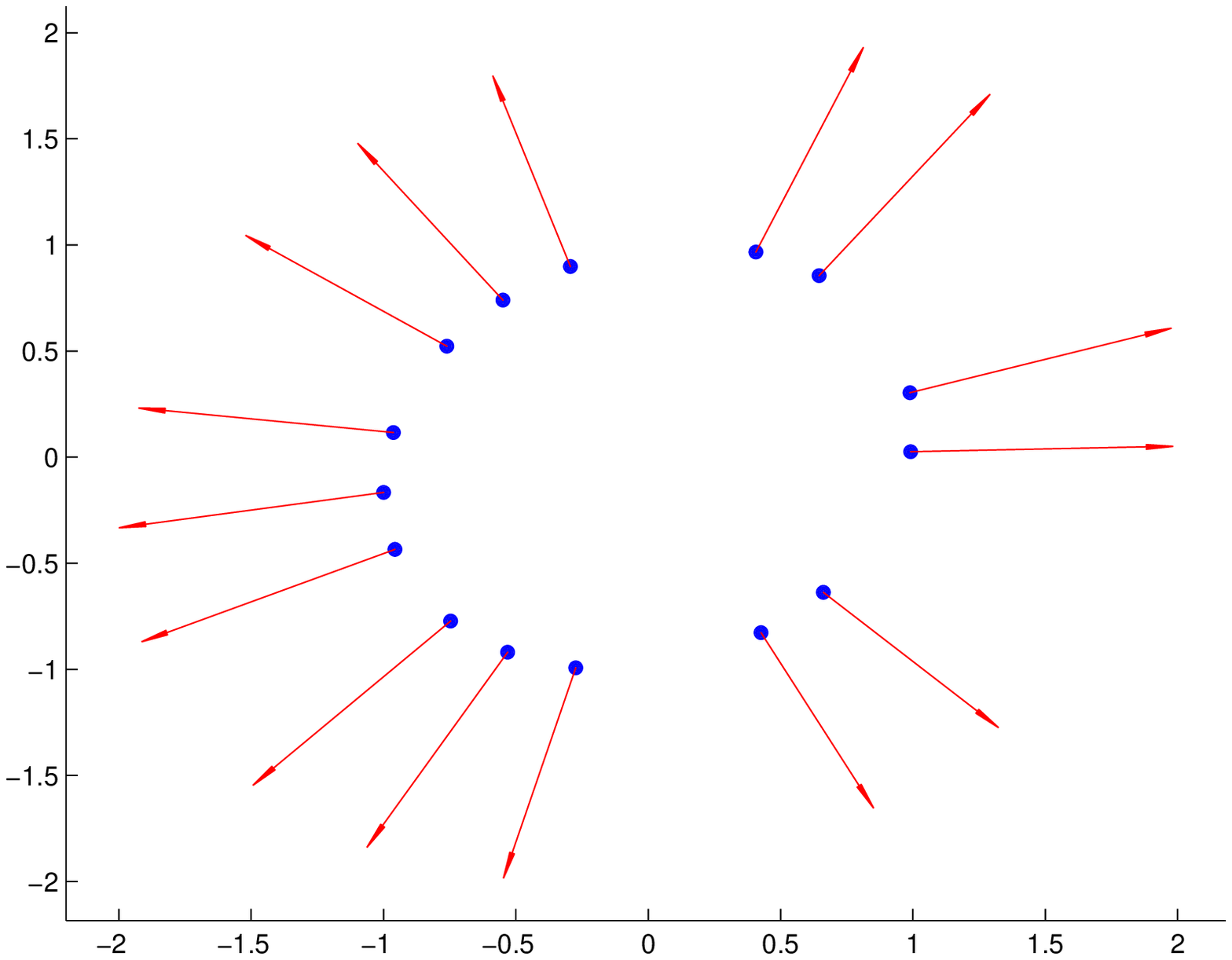}
\caption{From top-left to bottom-right: first two coordinates of the initial configurations of Sections \ref{sec:outlier}, \ref{sec:geomdis}, \ref{sec:cauchydis},  \ref{sec:gaussdis},  \ref{sec:unif}. The blue points are the positions of the agents, the red arrows their consensus parameters.}
\label{inconf}
\end{figure}

\subsection{Examples where (DR) performs second best after (SP)}
\subsubsection{Configuration with one outlier}	
\label{sec:outlier}
We take into account $N = 9$ agents in dimension $d = 100$ for which the $j$-th spatial component of the $i$-th agent is given by the formula
\begin{align*}
(x_i)_j = \frac{1}{2}\cos(i + j\sqrt{2}) \quad \text{for} \quad j=1,\ldots,d \quad \text{and} \quad i=1,\ldots,N.
\end{align*}
The result obtained is a set of points non-homogeneously distributed over an almost spherical configuration, which, projected in $\R^2$, resembles an ellipse. A similar configuration is used for the consensus parameter of each agent, for which we have
\begin{align*}
(v_i)_j = \sin(i \sqrt{3} - j) \quad \text{for} \quad j=1,\ldots,d \quad \text{and} \quad i=1,\ldots,N -1;
\end{align*}
the initial consensus parameter of the $N$-th agent is instead the vector with all entries set equal to $10$.


\vspace{0.3cm}
\begin{center}
\setlength{\tabcolsep}{10pt}
\renewcommand{\arraystretch}{2}
\begin{tabular}{|c|c|c|c|c|c|c|c|}
\hline $N$ & $\theta$ & $\beta$ & $d$ & $\tau_0$ & $\hat{T}$ & $\tau$ & $V(0)-\gamma(X(0))^2$ \\
\hline \hline $9$ & $5$ & $0.6$ & $100$ & $7.33\cdot 10^{-4}$& $115.17$ & $10^{-2}$ & $1031.3$ \\
\hline
\end{tabular}
\end{center}


\vspace{0.3cm}
The following table reports the performance of the different kinds of control taken into account:
\begin{center}
\setlength{\tabcolsep}{2pt}
\renewcommand{\arraystretch}{1.2}
\resizebox{\columnwidth}{!}{
\begin{tabular}{|c||c|c|c|c|c|c|c|c|c|c|c|c|c|c|c|c|}
\hline Control & (SP) & (U) & (R) & (R) & (DR) & (DR) & (DR) & (DR) & (DR) & (DR) & (DR) & (DR) & (DR) & (DR) & (DR) & (DR) \\
\hline \hline $k$ 			   & - & - & - & - & 1 & 1& 5& 5& 10& 10 & 25 & 25 & 32 & 40 & 55 & 55  \\
\hline $W(0)-\gamma(Y(0))^2$ & - & - & - & - & 202.0 & 1014.2 & 509.9 & 870.2& 1651.4 & 1072.3 & 1035.2 & 582.18 & 933.0 & 1273.1 & 1054.5 & 1046.5  \\
\hline $T_0$ 	  		  & 27.78 & 87.21 & 87.90 & 88.79 & 69.75 & 30.98 & 44.27 & 30.47 & 35.55& 29.65 & 27.8 & 44.75 & 30.65 & 32.49 & 28.20 & 28.19  \\
\hline $T_{0.5}$ 		  & 5.44 & 22.96 & 22.64 & 23.21 & 5.92 & 5.44 & 5.44 & 5.44& 5.44& 5.44 & 5.44 & 5.44 & 5.44 & 5.44 & 5.44 & 5.44  \\
\hline $T_{S}$  & - & - & - & - & 13.47 & 22.25 & 17.81 & 22.39& 32.62& 29.03 & 27.8 & 23.14 & 26.2 & 28.94 & 28.20 & 28.15  \\
\hline
\end{tabular}
}
\end{center}
\vspace{0.3cm}

We first observe that if the system is left alone, with no control acting on it, the quantity $V(t)-\gamma(X(t))^2$ decreases only from $1031.3$ to $946.2$ at time 100, from which we can infer that the system would not reach the consensus region without an external intervention. Notice that the Sparse Control (SP) is the fastest; this shall be a common feature of all our experiments, as expected by its optimality shown in \cite[Proposition 3]{CFPT14}. The uniform control (U) and the random control (R) perform similarly and both take more than three times longer to reach the consensus region as (SP). The control (DR) has comparable performances to (SP), and very surprisingly even when projecting to dimension $k = 1$ the system reaches the consensus region faster than with the controls (U) and (R).

\begin{figure}[H]
\centering
\begin{minipage}{.5\textwidth}
  \centering
  \includegraphics[scale=0.22]{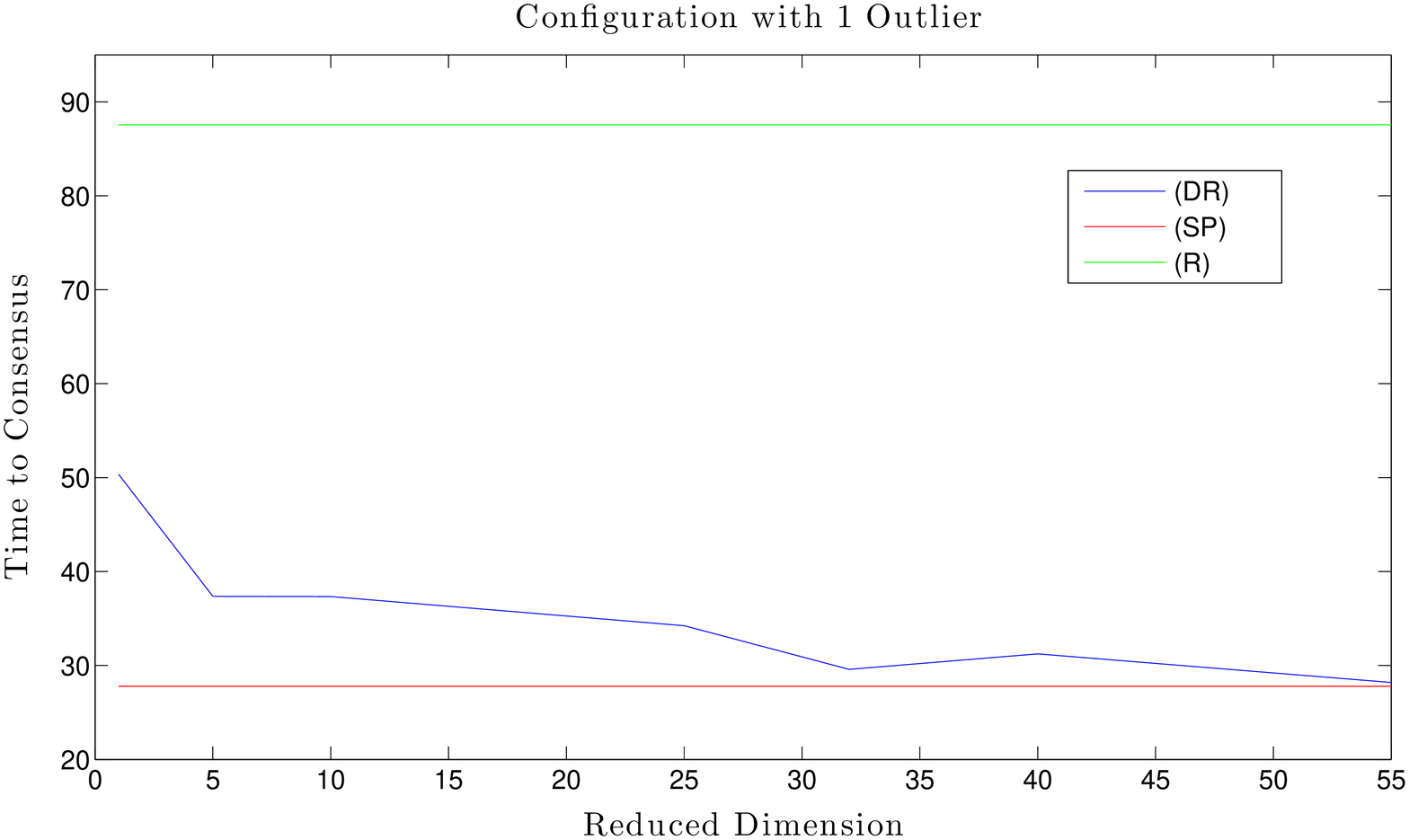}
  \caption{Time to consensus for (DR) in function of the projected dimension $k$, and comparison with (SP) and (R).}
  \label{time_inconf4}
\end{minipage}%
\begin{minipage}{.5\textwidth}
  \centering
  \includegraphics[scale=0.2]{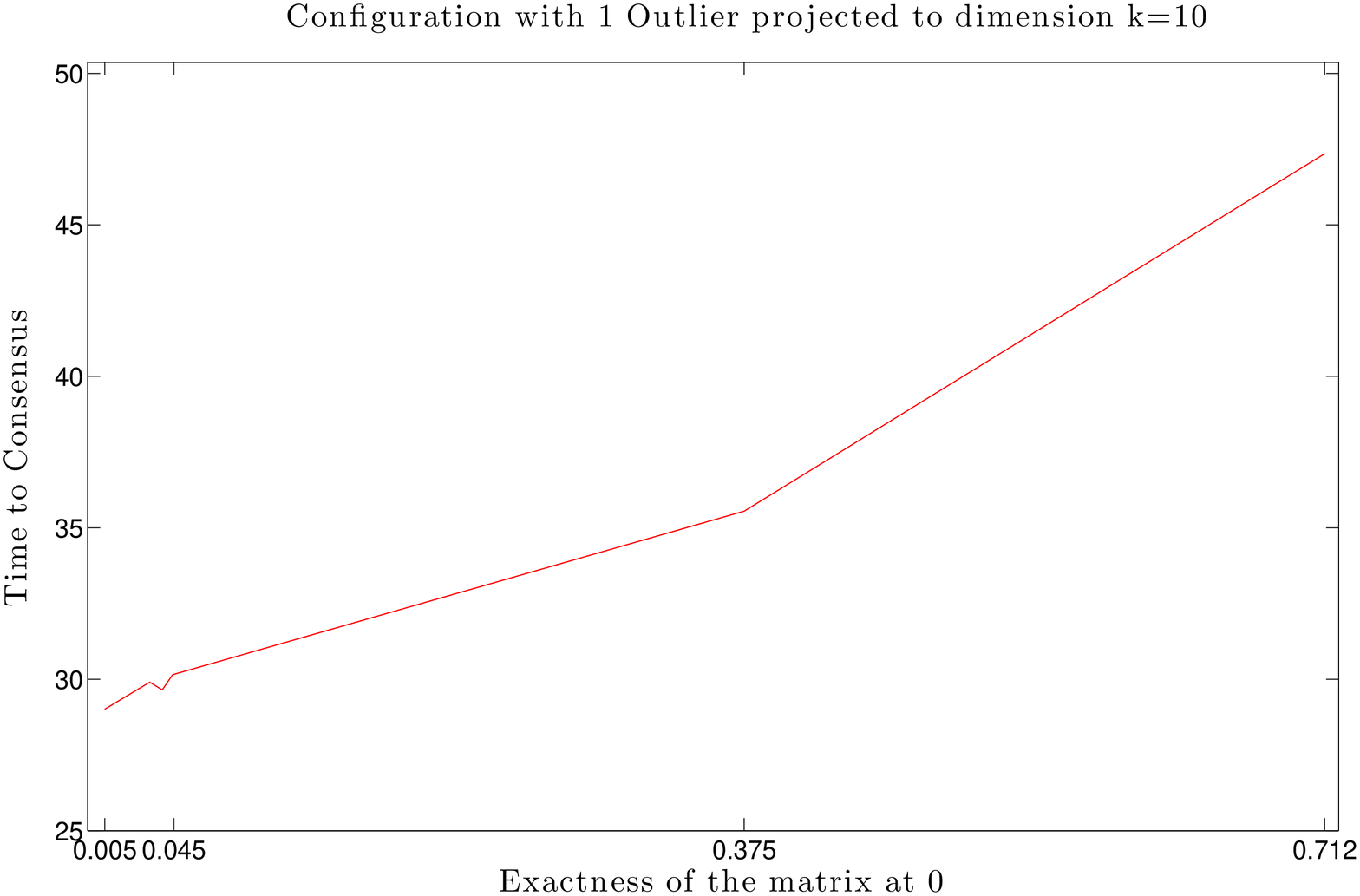}
  \caption{Time to consensus in function of the exactness of the matrix at 0 for the fixed dimension $k = 10$.}
  \label{exact_inconf4}
\end{minipage}%
\end{figure}

In Figure \ref{time_inconf4}, we illustrate the time $T_0$ the system takes to reach the consensus region as a function of  the projected dimension $k$ for the control (DR). If multiple tests are made with the same dimension $k$, we consider an average of the results. We also report, in different colors, the values of $T_0$ we obtain with the control (SP) and the control (R) (blue and green line, respectively). It can be seen how the performance of (DR) is basically the same as (SP) even if we reduce the dimensionality by 80\% .

Up to now, we don't have any procedure to test if the randomly generated matrix we use to implement the control (DR) satisfies the requested properties of Theorem \ref{eq:convergence}. Moreover, to get a precise answer, we would need to gather information which belongs to the high-dimensional system beyond time 0, something which we are not allowed to know in advance. We claim, however, that the quantity, which we call {\it the exactness of the matrices at 0},
\begin{align*}
E_M=\left|1-\frac{V(0)}{W(0)}\right| = \left|1-\frac{\sum_{i=1}^N \|v^{\perp}_i(0)\|^2}{\sum_{i=1}^N \|Mv^{\perp}_i(0)\|^2}\right|.
\end{align*}
is a measure of how good the matrix $M$ is. To show that, we have considered six different $M \in \R^{k\times d}$ for $k = 10$ and their respective time to the consensus region: we report in Figure \ref{exact_inconf4} the time to consensus for the system in function of the exactness of the matrices at 0. A correlation between how $E_M$ is close to zero and how effective is the control, is clearly visible.


\subsubsection{Configuration generated by a geometric distribution}	\label{sec:geomdis}
In this section we consider a system where the locations are distributed as in the example before, while the consensus parameters are given by the formula
\begin{align*}
(v_i)_j = (1.2)^{(i-1)/2} \cdot \sin(i \sqrt{3} - j) \quad \text{for} \quad j=1,\ldots,d \quad \text{and} \quad i=1,\ldots,N -1;
\end{align*}
This results in a more heterogeneous situation at the beginning.  We also increase the dimension $d$ to 500, the strength of the force $\theta$ to $20$ and $\beta$ to $0.65$.
\vspace{0.3cm}
\begin{center}
\setlength{\tabcolsep}{10pt}
\renewcommand{\arraystretch}{2}
\begin{tabular}{|c|c|c|c|c|c|c|c|}
\hline $N$ & $\theta$ & $\beta$ & $d$ & $\tau_0$ & $\hat{T}$ & $\tau$ & $V(0)-\gamma(X(0))^2$ \\
\hline
\hline 15 & 20 & $0.65$ & 500 & $1.26\cdot 10^{-4}$& $51.82$ & $10^{-2}$ & $1195.5$ \\
\hline
\end{tabular}
\end{center}

\vspace{0.3cm}
The following table summarizes the results of the experiments:
\begin{center}
\setlength{\tabcolsep}{2pt}
\renewcommand{\arraystretch}{1.2}
\resizebox{\columnwidth}{!}{
\begin{tabular}{|c||c|c|c|c|c|c|c|c|c|c|c|c|c|}
\hline Control & (SP) & (U) & (R) & (R) & (DR) & (DR) & (DR) & (DR) & (DR) & (DR) & (DR) & (DR) & (DR) \\
\hline
\hline $k$ 			   & - & - & - & - & 1 & 1 & 10 & 10 & 50 & 50 & 50 & 100 & 100  \\
\hline $W(0)-\gamma(Y(0))^2$ & - & - & - & - & 1194.2 & 1191.9& 1194.3 & 1197.5 & 1007.2 & 1199.7 & 1178.2 & 1079.1 & 1204.7  \\
\hline $T_0$ 	  		  & 23.45 & 38.02 & 38.10 & 39.82 & 40.96 & 45.41& 26.66 & 29.81 & 27.45 & 24.33 & 26.48 & 26.88 & 24.02  \\
\hline $T_{0.5}$ 		  & 5.49 & 7.60 & 7.68 & 7.66 & 7.455 & 9.04& 5.64 & 5.86 & 5.55 & 5.5 & 5.59 & 5.59 & 5.50  \\
\hline
\end{tabular}
}
\end{center}
\vspace{0.3cm}

If we let the system free to evolve, the quantity $V(t)-\gamma(X(t))^2$ decreases only from $1195.5$ to $1122.3$ at time 30. The slowness of the decay implies the necessity of a control. The uniform control (U) and the random control (R) perform similarly, as in the example before. However, the control (DR) overwhelms both when the projected dimension is large enough ($k \geq 10$). Figure \ref{time_inconf7} shows the performance of (DR) in function of $k$ and compares it with (R) and (SP).


\begin{figure}[H]
\centering
Time to consensus for (DR) in function of the projected dimension $k$, and comparison with (SP) and (R)
\begin{minipage}{.5\textwidth}
  \centering
  \vspace{0.2cm}
  \includegraphics[scale=0.22]{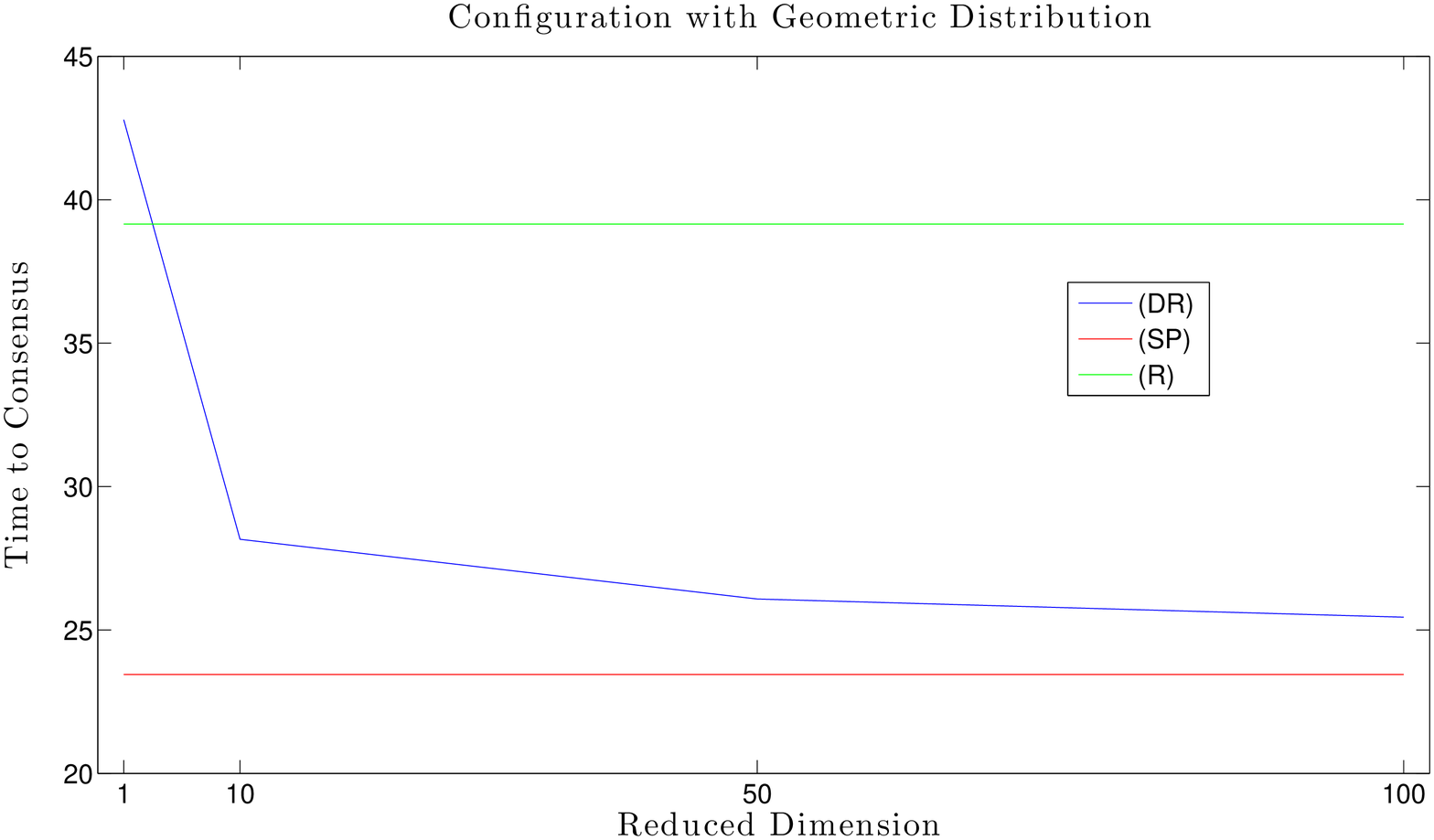}
  \caption{}
  \label{time_inconf7}
\end{minipage}%
\begin{minipage}{.5\textwidth}
  \centering
  \vspace{0.2cm}
  \includegraphics[scale=0.22]{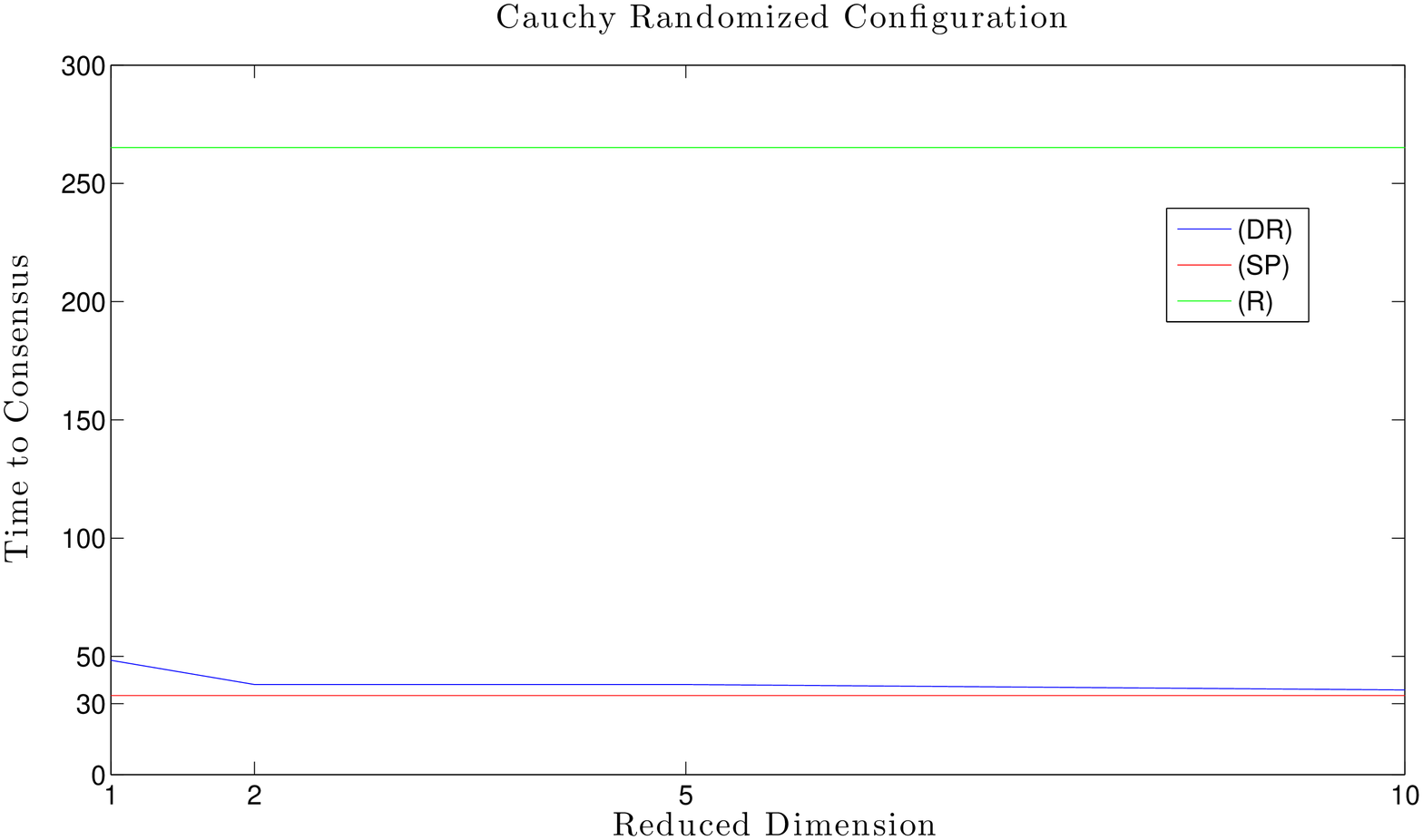}
  \caption{}
  \label{time_inconf9}
\end{minipage}%
\end{figure}

\subsubsection{Configuration generated by a Cauchy distribution}	\label{sec:cauchydis}
For the system considered in this section, the initial configuration is calculated as follows: the $j$-th spatial component of the $i$-th agent is the value of a normal distribution with expected value $0$ and standard deviation $1$, independently selected for different $i$ and $j$. The $j$-th component of the consensus parameter of the $i$-th agent is ruled by a Cauchy distribution, whose density is given by
\begin{align*}
f(x)=\frac{b}{\pi(b^2+x^2)}.
\end{align*}  
We choose the height to be $b=1/40$ (to get a reasonably large $V(0)$ in the computations). The initial configuration is generated once and then fixed for all the experiments with the different controls (SP), (R), (U) and (DR).


Below we list the parameters we fix for this section:
\vspace{0.3cm}
\begin{center}
\setlength{\tabcolsep}{10pt}
\renewcommand{\arraystretch}{2}
\begin{tabular}{|c|c|c|c|c|c|c|c|}
\hline $N$ & $\theta$ & $\beta$ & $d$ & $\tau_0$ & $\hat{T}$ & $\tau$ & $V(0)-\gamma(X(0))^2$ \\
\hline
\hline 25 & 5 & $0.6$ & 100 & $3.77\cdot 10^{-4}$& $214.76$ & $10^{-2}$ & $464.03$ \\
\hline
\end{tabular}
\end{center}

\vspace{0.3cm}
The following table reports the performances of the various controls:
\vspace{0.3cm}
\begin{center}
\setlength{\tabcolsep}{2pt}
\renewcommand{\arraystretch}{1.2}
\begin{tabular}{|c||c|c|c|c|c|c|c|c|c|c|c|}
\hline Control & (SP) & (U) & (R) & (DR) & (DR) & (DR) & (DR) & (DR) & (DR) & (DR) \\
\hline
\hline $k$ 			   & - & - & -  & 1 & 1 & 2 & 5 & 5 & 10 & 10  \\
\hline $W(0)-\gamma(Y(0))^2$ & - & - & -  & 461.04 & 461.04 & 475.48 & 464.39 & 464.39 & 465.00 & 465.00  \\
\hline $T_0$ 	  		  & 33.45 & 266.44 & 265.14  & 48.04 & 48.6 & 38.07 & 37.98 & 38.16 & 36.11 & 35.41  \\
\hline $T_{0.5}$ 		  & 6.1 & 70.55 & 68.54 & 6.1 & 6.1 & 6.1 & 6.1 & 6.1 & 6.1 & 6.1  \\
\hline
\end{tabular}
\end{center}
\vspace{0.3cm}

As in the examples before, the control (DR) clearly outperforms both (R) and (U), and in this case even for $k = 1$. Figure \ref{time_inconf9} compares the effectiveness of the controls (DR) (in function of $k$), (R) and (SP). We point out that, even in this situation, a control is necessary to steer the system to consensus since the quantity $V(t)-\gamma(X(t))^2$ decreases only from $464$ to $436.5$ at time 50 if no control is applied.


\subsection{Examples in which the performances of (R) and (U) are comparable to (DR)}

\subsubsection{Configuration generated by a normal distribution} \label{sec:gaussdis}
\label{seq:uniform}
In this example, the $j$-th spatial (resp., consensus parameter) component of the $i$-th agent is independently generated by a normal distribution with expected value $0$ and standard deviation $10$ (resp., $8$). As in Section \ref{sec:cauchydis}, we generate the initial configuration once and we use it for all the experiments with the controls.

The parameters used for this configuration are listed in the table below, and after it we report the performances of the various controls:
\vspace{0.3cm}
\begin{center}
\setlength{\tabcolsep}{10pt}
\renewcommand{\arraystretch}{2}
\begin{tabular}{|c|c|c|c|c|c|c|c|}
\hline $N$ & $\theta$ & $\beta$ & $d$ & $\tau_0$ & $\hat{T}$ & $\tau$ & $V(0)-\gamma(X(0))^2$ \\
\hline
\hline 10 & 20 & $0.65$ & 500 & $2.55\cdot 10^{-6}$& 165.68 & $5  \cdot 10^{-3}$ & $27458$ \\
\hline
\end{tabular}
\end{center}

\vspace{-0.1cm}
\begin{center}
\setlength{\tabcolsep}{2pt}
\renewcommand{\arraystretch}{1.2}
\resizebox{\columnwidth}{!}{
\begin{tabular}{|c||c|c|c|c|c|c|c|c|c|c|c|c|c|c|c|}
\hline Control & (SP) & (U) & (R) & (R) & (DR) & (DR) & (DR) & (DR) & (DR) & (DR) & (DR) & (DR) & (DR) & (DR) & (DR) \\
\hline
\hline dim. $k$ 			   & - & - & - & - & 1 & 1& 2 & 5 & 10 & 10 & 20 & 50 & 50 & 100 & 100 \\
\hline $W(0)-\gamma(Y(0))^2$ & - & - & - & - & 27496 & 27469 & 27421 & 27425& 27458 & 27493 & 27464 & 27482& 27481 & 27495 & 27498 \\
\hline $T_0$ 	  		  & 82.65 & 84.56 & 85.82 & 85.25 & 129.28 & 153.02 & 115.91 & 99.79& 95.31 & 100.18 & 96.7& 89.79 & 91.02 & 91.67 & 89.60 \\
\hline $T_{0.5}$ 		  & 24.09 & 24.13 & 24.15 & 24.13 & 36.195 & 42.76 & 29.28 & 26.47 &26.75 & 25.25 & 25.32 & 24.7& 24.74 & 24.44 & 24.32 \\
\hline $T_{S}$ 		 & - & - & - & - & 68.06 & 62.94 & 76.91 & 80.51& 80.55 & 77.30 & 81.57 & 82.25& 82.25 & 82.49 & 82.57 \\
\hline
\end{tabular}
}
\end{center}
\vspace{0.3cm}

This time the controls (R) and (U) are quasi-optimal, performing in almost the same way as the benchmark control (SP). Figure \ref{time_inconf8} shows that the control (DR) behaves similarly to (R) and (SP) up to a reduced dimension $k = 50$ (hence up to $10\%$ of the original dimension): from that point on the efficiency rapidly deteriorates, making the control unfeasible.



\begin{figure}[H]
\centering
Time to consensus for (DR) in function of the projected dimension $k$, and comparison with (SP) and (R)
\begin{minipage}{.5\textwidth}
  \centering
  \vspace{0.22cm}
  \includegraphics[scale=0.22]{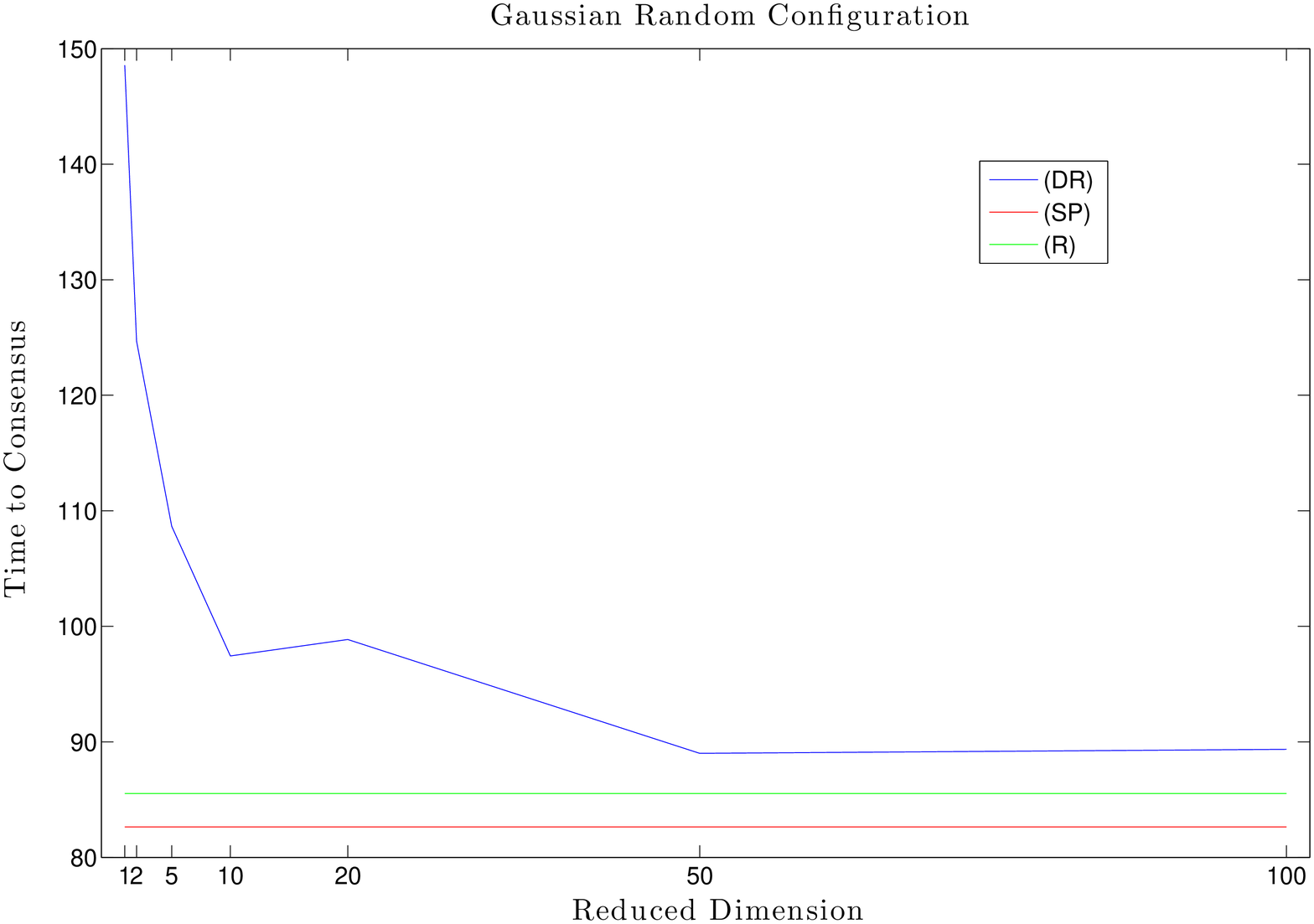}
  \caption{}
  \label{time_inconf8}
\end{minipage}%
\begin{minipage}{.5\textwidth}
  \centering
  \vspace{0.2cm}
  \includegraphics[scale=0.22]{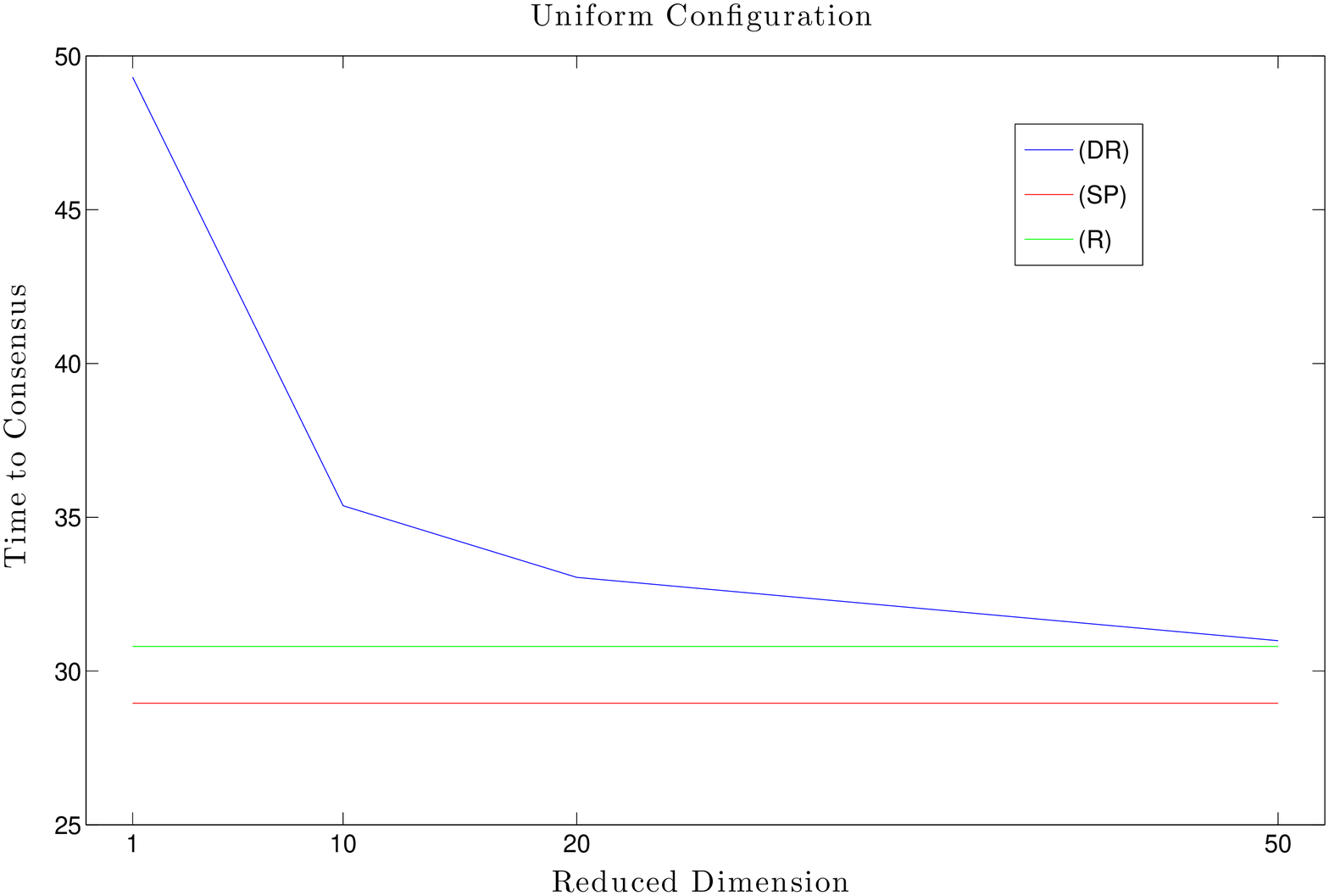}
  \caption{}
  \label{time_inconf1}
\end{minipage}%
\end{figure}

\subsubsection{Uniform configuration}  \label{sec:unif}
As last example we consider a configuration similar to the one of Section \ref{sec:outlier}: the $j$-th spatial and consensus parameter components of the $i$-th agent are both given by
\begin{align*}
(x_i)_j = (v_i)_j = \cos(i + j\sqrt{2}) \quad \text{for} \quad j=1,\ldots,d \quad \text{and} \quad i=1,\ldots,N.
\end{align*}
The following tables report the parameters of the configuration taken into account and the outcomes with the different controls:

\vspace{0.3cm}
\begin{center}
\setlength{\tabcolsep}{10pt}
\renewcommand{\arraystretch}{2}
\begin{tabular}{|c|c|c|c|c|c|c|c|}
\hline $N$ & $\theta$ & $\beta$ & $d$ & $\tau_0$ & $\hat{T}$ & $\tau$ & $V(0)-\gamma(X(0))^2$ \\
\hline
\hline 15 & 5 & $0.8$ & 200 & $1.91\cdot 10^{-5}$& $59.48$ & $10^{-3}$ & $98.30$ \\
\hline
\end{tabular}
\end{center}


\vspace{-0.1cm}
\begin{center}
\setlength{\tabcolsep}{2pt}
\renewcommand{\arraystretch}{1.2}
\resizebox{\columnwidth}{!}{
\begin{tabular}{|c||c|c|c|c|c|c|c|c|c|c|c|c|}
\hline Control & (SP) & (U) & (R) & (R) & (DR) & (DR) & (DR) & (DR) & (DR) & (DR) & (DR) & (DR) \\
\hline
\hline $k$ 			   & - & - & - & - & 1 & 1 & 10 & 10	 & 20 & 20 & 50 & 50 \\
\hline $W(0)-\gamma(Y(0))$ & - & - & - & - & 95.98 & 43.50 & 95.85 & 96.70 & 77.12 & 101.65 & 97.02 & 122.83 \\
\hline $T_0$ 	  		  & 28.95 & 29.95 & 30.86 & 30.74 & 53.97 & 44.47 & 38.13 & 32.35 & 33.08 & 32.73 & 29.41 & 32.45 \\
\hline $T_{0.5}$ 		  & 7.99 & 8.30 & 8.30 & 8.31 & 9.74 & 9.21 & 8.92 & 8.15	 & 8.21 & 8.17 & 7.99 & 8.14 \\
\hline
\end{tabular}
}
\end{center}
\vspace{0.3cm}

As before, (R) and (U) perform similarly to (SP); (DR) is able to compete up to a dimension reduction of $25\%$ of the original dimension ($k = 50$). From there on, its efficiency steadily declines. This phenomenon can be witnessed in Figure \ref{time_inconf1}.

	
\subsection{Conclusions from the experiments}
In this section we summarize the conclusions that can be drawn from the list of experiments reported in this numerical section.
\begin{enumerate} 
\item A common feature of all the experiments is that the control (DR) is highly competitive with respect to the benchmark control (SP) up to a reduced dimension which is $10\%$ of the original one. Indeed, in this case (DR) takes between 5 to 22\% more time than (SP) to steer the system to consensus. This suggests that the approach of dimension reduction works in general much better practically than theoretically, and that our analysis in Theorem \ref{eq:convergence} is quite conservative.
\item The dimension of the matrix is not the only necessary ingredient to obtain a competitive control: a matrix should also fulfill the Johnson--Lindenstrauss property for certain points of the high dimensional system. Since to check the latter condition we need information regarding the future development of the system, we need to design different criteria to distinguish ``good'' matrices versus ``bad'' ones. In Section \ref{sec:outlier}, we have seen that an efficient sieve is the notion of \emph{exactness of a matrix at $0$}: the smaller this value is, the better the control shall perform, according to the empirical data we have gathered.
\item There is no proof yet that random sparse control (R) forces the system to enter the consensus region almost surely for every configuration, but numerical experiments suggest this behavior. Furthermore, it is interesting to notice that the time to consensus obtained by the use of the uniform control (U) is always very close to the one we get by using the random sparse control strategy (R): this strongly hints that the expected value of the time to consensus of the random control (R) could be very near or even equal to the one of (U).
\item A common feature of the last two examples is the ``relative homogeneity'' of the consensus para\-meters with respect to the mean consensus para\-meter: by this we mean that the consensus parameters of all the agents compete to be the farthest away from it, and thus the sparse control will jump from one to another continuously, showing a chattering behavior. In contrast, all the first three experiments feature a relatively small subgroup of agents whose consensus parameters are the farthest away from the mean consensus parameter by a considerable margin. These are the case where the controls (SP) and (DR) are substantially more efficient than (R) and (U): by firmly acting on the most ``badly behaving'' agents, we are able to steer the system to consensus faster than employing control strategies which are blind to the structure of the group. It is thus advisable to use sparse strategies only when the consensus parameters of the agents are sufficiently ``asymmetric'' at the starting point.

\end{enumerate}

\section{Appendix}

We need a technical lemma which can be found also in \cite{CFPT14}. But with a slightly different argument, we could improve the inequalities there and get rid of an $N^2$, which is important for estimating the time of entrance in the consensus region of controlled Cucker--Smale systems, depending on $N$.

\begin{Lemma}
\label{le:tech_up}
If there exists $\eta>0$ and $T>0$ such that 
\begin{align*}
V'(t)\leq -\eta \sqrt{V(t)}
\end{align*}
for almost every $t \in [0,T]$, then
\begin{align*}
V(t) \leq \left(\sqrt{V(0)}-\frac{\eta}{2}t\right)^2
\end{align*}
and
\begin{align*}
X(t) \leq 2X(0) + \frac{2}{\eta^2}V(0)^2.
\end{align*}
\end{Lemma}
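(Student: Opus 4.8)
The plan is to reduce both inequalities to one-dimensional comparison arguments. For the first estimate I would work with the function $g(t):=\sqrt{V(t)}$, which is absolutely continuous on any subinterval where $V$ stays positive. On such a subinterval the chain rule together with the hypothesis gives, for almost every $t$,
\[
g'(t)=\frac{V'(t)}{2\sqrt{V(t)}}\leq \frac{-\eta\sqrt{V(t)}}{2\sqrt{V(t)}}=-\frac{\eta}{2}.
\]
Integrating yields $\sqrt{V(t)}\leq \sqrt{V(0)}-\tfrac{\eta}{2}t$ as long as the right-hand side is nonnegative; since both sides are then nonnegative, squaring preserves the inequality and produces $V(t)\leq\bigl(\sqrt{V(0)}-\tfrac{\eta}{2}t\bigr)^2$. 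Once $V$ reaches $0$ it remains $0$ (being nonnegative and, by $V'\leq 0$, nonincreasing), while the claimed bound is a nonnegative square, so the estimate continues to hold trivially thereafter.

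For the second estimate I would exploit the relation $\dot{x}_i=v_i$, which is the first equation of the system and holds irrespective of the control, since the control enters only the velocity equation. Because $X(t)=B(x(t),x(t))$ with $B$ a symmetric positive semidefinite bilinear form, differentiating and applying the Cauchy--Schwarz inequality for $B$ gives
\[
\frac{d}{dt}X(t)=2B(\dot{x}(t),x(t))=2B(v(t),x(t))\leq 2\sqrt{V(t)}\,\sqrt{X(t)},
\]
hence $\frac{d}{dt}\sqrt{X(t)}\leq\sqrt{V(t)}$ wherever $X>0$. Integrating from $0$ to $t$ then yields $\sqrt{X(t)}\leq\sqrt{X(0)}+\int_0^t\sqrt{V(s)}\,ds$.

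It remains to control the integral by means of the first part. Since $\sqrt{V(s)}\leq\bigl(\sqrt{V(0)}-\tfrac{\eta}{2}s\bigr)_+$, the integral is bounded by the area of the triangle with base $2\sqrt{V(0)}/\eta$ and height $\sqrt{V(0)}$, that is $\int_0^t\sqrt{V(s)}\,ds\leq V(0)/\eta$ for every $t$. Consequently $\sqrt{X(t)}\leq\sqrt{X(0)}+V(0)/\eta$, and squaring together with Young's inequality $2ab\leq a^2+b^2$ applied to the cross term $2\sqrt{X(0)}\,V(0)/\eta$ gives precisely $X(t)\leq 2X(0)+\tfrac{2}{\eta^2}V(0)^2$.

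The only delicate point, and thus the main obstacle, is the lack of smoothness of $\sqrt{V}$ and $\sqrt{X}$ at the zeros of $V$ and $X$: the chain-rule manipulations above are legitimate only where these quantities are strictly positive. I would resolve this either by restricting to the maximal interval on which $V$ (respectively $X$) is positive and treating the vanishing regime separately as indicated, or by the standard regularization replacing $V$ by $V+\varepsilon$ and letting $\varepsilon\to0^+$; comparison with the explicit solution $\phi(t)=\bigl(\sqrt{V(0)}-\tfrac{\eta}{2}t\bigr)_+^2$ of $\dot\phi=-\eta\sqrt{\phi}$ makes the argument rigorous. Everything else is routine integration.
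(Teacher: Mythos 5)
Your proof is correct; its first half coincides with the paper's (both integrate $(\sqrt{V})'=V'/(2\sqrt{V})\leq -\eta/2$), but the second half takes a genuinely different, differential route where the paper argues integrally. To get $\sqrt{X(t)}\leq \sqrt{X(0)}+\int_0^t\sqrt{V(s)}\,ds$ you differentiate $X=B(x,x)$ and invoke the Cauchy--Schwarz inequality for the positive semidefinite form $B$, obtaining $X'\leq 2\sqrt{V}\sqrt{X}$ and hence $(\sqrt{X})'\leq\sqrt{V}$; the paper instead writes $x_i(t)-x_j(t)=x_i(0)-x_j(0)+\int_0^t (v_i-v_j)(s)\,ds$ and applies the vector-valued Minkowski inequality, which yields the same estimate without ever differentiating $\sqrt{X}$ and so sidesteps the degeneracy at zeros of $X$ that you must patch via positivity intervals or regularization. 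Similarly, for the integral of $\sqrt{V}$ you use the explicit linear decay from the first part plus a triangle-area computation, whereas the paper reuses the hypothesis in the form $\sqrt{V}\leq -V'/\eta$ and integrates directly, giving $\int_0^t\sqrt{V(s)}\,ds\leq \frac{1}{\eta}\left(V(0)-V(t)\right)\leq \frac{1}{\eta}V(0)$ --- a one-line bound that does not depend on the first assertion at all. Both arguments conclude identically via $(a+b)^2\leq 2a^2+2b^2$. What your version buys is rigor at the degenerate set: your observation that once $V$ vanishes it stays zero (since $V'\leq -\eta\sqrt{V}\leq 0$ and $V\geq 0$), so the squared bound holds trivially past $t=2\sqrt{V(0)}/\eta$, addresses a point the paper glosses over when it divides by $\sqrt{V(s)}$ under the integral. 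What the paper's version buys is economy: the Minkowski route needs no regularization for $X$, and its bound on $\int_0^t\sqrt{V}$ is independent of the first estimate.
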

\begin{proof}
Integrating the first assumption one has
\begin{align*}
\int_0^t \frac{V'(s)}{\sqrt{V(s)}} \ ds \leq -\eta t
\end{align*}\
and hence 
\begin{align*}
\sqrt{V(t)}-\sqrt{V(0)}=\frac{1}{2}  \int_0^t \frac{V'(s)}{\sqrt{V(s)}} \ ds \leq -\frac{\eta}{2}t.
\end{align*}
Furthermore, to prove the second statement of the lemma we observe
\begin{align*}
 \int_0^t \sqrt{V(s)} \ ds \leq -\frac{1}{\eta} \int_0^t V'(s) \ ds = -\frac{1}{\eta} (V(t)-V(0)) \leq \frac{1}{\eta} V(0).
\end{align*}
On the other hand, using the (vector-valued) Minkowski inequality in the second step
\begin{align*}
\sqrt{X(t)}&=\left(\frac{1}{2N^2} \sum_{i,j} \|x_i(t)-x_j(t)\|^2\right)^{1/2} \\
&\leq {\left(\frac{1}{2N^2} \sum_{i,j} \|x_i(0)-x_j(0)\|^2\right)^{1/2}} + \left(\frac{1}{2N^2} \sum_{i,j} \left(\int_0^t \|v_i(s)-v_j(s)\| \ ds\right)^2 \right)^{1/2} \\
&\leq \sqrt{X(0)}+\int_0^t  \left(\frac{1}{2N^2} \sum_{i,j} \|v_i(s)-v_j(s)\|^2 \right)^{1/2}  \ ds\\
&= \sqrt{X(0)} + \int_0^t \sqrt{V(s)} \ ds \\
&\leq \sqrt{X(0)} -  \frac{1}{\eta} \int_0^t V'(s) \ ds \\
&\leq \sqrt{X(0)} + \frac{1}{\eta} V(0)
\end{align*}
and furthermore by $(x+y)^2 \leq 2x^2+2y^2$ it follows 
\begin{align*}
X(t) \leq 2X(0) + \frac{2}{\eta^2} V^2(0). 
\end{align*}
\end{proof}

\subsection{Gronwall's estimates and variations on the theme}
We need to employ at several places Gronwall's estimates. However, besides the classical one, we need to develop a variation for piecewise continuous evolutions. Both are reported as follows.
\begin{Lemma}[Classical Gronwall's Lemma]
\label{le:class_gronwall}
Let $I=[a,b]$ be an interval on the real line. Let $\rho,\beta$ and $u$ be real valued functions and furthermore assume that $\beta$ is non-negative as well as continuous, $u$ is continuous and $\rho$ is non-decreasing on $I$ and integrable on $I$.

Assume that we have
\begin{align*}
u(t)\leq \rho(t)+\int_a^t \beta(s) u(s) \ ds, \quad \forall t \in I,
\end{align*} 
then
\begin{align*}
u(t)\leq \rho(t) e^{\int_a^t \beta(s) \ ds}, \quad \forall t \in I.
\end{align*}
\end{Lemma}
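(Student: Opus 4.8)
The plan is to reduce the integral inequality to a first-order linear differential inequality for the auxiliary function $w(t) := \int_a^t \beta(s) u(s)\,ds$, integrate it with an integrating factor, and exploit the monotonicity of $\rho$ only at the very last step. First I would note that, since $\beta$ and $u$ are continuous on $I$, the function $w$ is continuously differentiable with $w(a)=0$ and $w'(t)=\beta(t)u(t)$. Substituting the hypothesis $u(t)\leq \rho(t)+w(t)$ and using $\beta(t)\geq 0$ yields the differential inequality $w'(t)\leq \beta(t)w(t)+\beta(t)\rho(t)$, i.e. $w'(t)-\beta(t)w(t)\leq \beta(t)\rho(t)$.

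Next I would multiply through by the (positive) integrating factor $\mu(t):=e^{-\int_a^t \beta(s)\,ds}$. Since $\mu'(t)=-\beta(t)\mu(t)$, the left-hand side becomes an exact derivative, giving $\frac{d}{dt}\bigl(\mu(t)w(t)\bigr)\leq \mu(t)\beta(t)\rho(t)$. Integrating from $a$ to $t$ and using $\mu(a)=1$, $w(a)=0$ produces $\mu(t)w(t)\leq \int_a^t \beta(s)\rho(s)e^{-\int_a^s \beta(r)\,dr}\,ds$. This part is entirely routine.

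The one place where the hypotheses must be used carefully — and essentially the only delicate point — is the estimate of that last integral. Here I would invoke that $\rho$ is non-decreasing, so $\rho(s)\leq \rho(t)$ for all $s\in[a,t]$; pulling the constant $\rho(t)$ out and computing $\int_a^t \beta(s)e^{-\int_a^s \beta(r)\,dr}\,ds = 1-e^{-\int_a^t \beta(s)\,ds}$ exactly, one obtains $\mu(t)w(t)\leq \rho(t)\bigl(1-\mu(t)\bigr)$. Multiplying by $\mu(t)^{-1}=e^{\int_a^t \beta(s)\,ds}$ gives $w(t)\leq \rho(t)\bigl(e^{\int_a^t \beta(s)\,ds}-1\bigr)$, and finally combining with $u(t)\leq \rho(t)+w(t)$ yields the claimed bound $u(t)\leq \rho(t)e^{\int_a^t \beta(s)\,ds}$ for every $t\in I$.

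The main obstacle, such as it is, lies precisely in this monotonicity step: if $\rho$ were only assumed integrable (and not non-decreasing) one would merely recover the general convolution form of Grönwall's inequality, with $\rho$ remaining inside the integral. It is exactly the non-decreasing hypothesis that licenses factoring $\rho(t)$ out of the integral and thereby collapsing the estimate into the clean closed form stated. I would make sure to flag that the continuity of $\beta$ and $u$ guarantees differentiability of $w$, so that the integrating-factor computation is justified without any further regularity assumptions.
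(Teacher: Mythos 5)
Your proof is correct and complete: the reduction to a differential inequality for $w(t)=\int_a^t \beta(s)u(s)\,ds$, the integrating factor $e^{-\int_a^t \beta(s)\,ds}$, and the final monotonicity step — where pulling $\rho(t)$ out of the integral is licensed by the non-negative weight $\beta(s)e^{-\int_a^s \beta(r)\,dr}$, which integrates exactly to $1-e^{-\int_a^t \beta(s)\,ds}$ — all check out, and you correctly identify that it is exactly the non-decreasing hypothesis on $\rho$ that collapses the general convolution form into the stated closed form. The paper states this classical lemma in its appendix without proof (using it only as an ingredient in the proof of the discrete variant, Lemma \ref{le:discr_gronwall}), so there is no paper proof to compare against; your argument is the standard integrating-factor proof of precisely this version of Gronwall's inequality and fills the omission correctly.
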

\begin{Lemma}[Discrete Gronwall's Lemma]
\label{le:discr_gronwall}
Let $I=[0,T]$ be an interval on the real line and $\tau \in [0,T]$.  Let $\rho,\beta_1,\beta_2$, and $u$ be real functions on $I$ such that
\begin{enumerate}
\item $\rho$ is non-decreasing and bounded on $I$,
\item $\beta_1$ is non-decreasing and continuous on $I$,
\item $\beta_2$ is non-negative and continuous on $I$ and
\item  $u$ be non-negative and continuous on $I$.
\end{enumerate}
Assume that for every $t \in [0,T]$ it holds: Let $n \in \N_0$ such that $n\tau \leq t < (n+1)\tau$ and assume
\begin{align*}
u(t)\leq (\rho(t)-\rho(n\tau))+  (1+\beta_1(t) - \beta_1(n\tau))u(n\tau)+\int_{n\tau}^t \beta_2(s) u(s) \ ds \quad \text{for all}\quad t \in I.
\end{align*} 
Then
\begin{align*}
u(t)&\leq u(0)e^{\beta_1(t)-\beta_1(0)+\int_{0}^t \beta_2(s) \ ds}   + (\rho(t)-\rho(0)) e^{\beta_1(t)-\beta_1(0)+\int_{0}^t \beta_2(s) \ ds}.
\end{align*}
\end{Lemma}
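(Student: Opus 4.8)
The plan is to prove the discrete Gronwall Lemma (Lemma \ref{le:discr_gronwall}) by a two-level argument: first establish the estimate on a single sampling interval $[n\tau,(n+1)\tau)$ using the classical continuous Gronwall Lemma \ref{le:class_gronwall}, then glue the interval estimates together by induction on $n$. Fix $t$ and let $n$ be such that $n\tau\leq t<(n+1)\tau$. On the interval $[n\tau,t]$ the hypothesis reads
\begin{align*}
u(s)\leq \big(\rho(s)-\rho(n\tau)\big)+\big(1+\beta_1(s)-\beta_1(n\tau)\big)u(n\tau)+\int_{n\tau}^s \beta_2(\xi)u(\xi)\,d\xi,
\end{align*}
valid for every $s\in[n\tau,t]$. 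The first two summands on the right are non-decreasing in $s$ (using monotonicity of $\rho$ and $\beta_1$ and the sign $u(n\tau)\geq 0$), so they can be treated as the non-decreasing forcing term required by Lemma \ref{le:class_gronwall}. Applying the classical Gronwall estimate on $[n\tau,s]$ with $a=n\tau$ yields
\begin{align*}
u(t)\leq \Big[\big(\rho(t)-\rho(n\tau)\big)+\big(1+\beta_1(t)-\beta_1(n\tau)\big)u(n\tau)\Big]\,e^{\int_{n\tau}^t \beta_2(\xi)\,d\xi}.
\end{align*}

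The second step is the induction. I set up the inductive hypothesis
\begin{align*}
u(n\tau)\leq \Big[u(0)+\big(\rho(n\tau)-\rho(0)\big)\Big]\,e^{\beta_1(n\tau)-\beta_1(0)+\int_0^{n\tau}\beta_2(\xi)\,d\xi},
\end{align*}
which holds trivially for $n=0$. Substituting this into the single-interval bound above, one has to control the factor $\big(1+\beta_1(t)-\beta_1(n\tau)\big)$ by the exponential $e^{\beta_1(t)-\beta_1(n\tau)}$, which follows from the elementary inequality $1+x\leq e^x$ valid for all real $x$ (here $x=\beta_1(t)-\beta_1(n\tau)\geq 0$ by monotonicity of $\beta_1$). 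Likewise $\rho(t)-\rho(n\tau)\geq 0$ lets me absorb the forcing contributions into the single bracket $u(0)+(\rho(t)-\rho(0))$, after using monotonicity of $\rho$ to write $\rho(n\tau)-\rho(0)\leq \rho(t)-\rho(0)$. The exponents then combine additively, $\big(\beta_1(n\tau)-\beta_1(0)+\int_0^{n\tau}\beta_2\big)+\big(\beta_1(t)-\beta_1(n\tau)+\int_{n\tau}^t\beta_2\big)=\beta_1(t)-\beta_1(0)+\int_0^t\beta_2$, giving exactly the claimed bound at $t$ and closing the induction.

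The step I expect to be the main obstacle is the bookkeeping of the forcing term across the jump at $n\tau$: the estimate carries both an additive part $\rho(t)-\rho(n\tau)$ and a multiplicative jump factor $1+\beta_1(t)-\beta_1(n\tau)$ acting on the accumulated value $u(n\tau)$, and one must be careful that these two mechanisms are combined in the right order so that the final bound has the clean product form with a single exponential and a single additive bracket. The key device is converting the multiplicative factor $1+(\beta_1(t)-\beta_1(n\tau))$ into the exponential $e^{\beta_1(t)-\beta_1(n\tau)}$ via $1+x\leq e^x$; this is what makes the per-interval exponential factors telescope into one global exponential and is precisely the point where the discrete (sampled) nature of the control enters, as opposed to the purely continuous classical statement.
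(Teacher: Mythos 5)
Your proof is correct and follows essentially the same route as the paper's: both apply the classical Gronwall Lemma \ref{le:class_gronwall} on each sampling interval with the non-decreasing forcing $(\rho(s)-\rho(n\tau))+(1+\beta_1(s)-\beta_1(n\tau))u(n\tau)$, and both telescope the per-interval factors via $1+x\leq e^x$; the only difference is cosmetic, in that you keep the final bound as a forward inductive invariant at the sampling points while the paper unrolls the recursion backwards into an explicit sum--product expression before invoking $\prod_i(1+a_i)\leq e^{\sum_i a_i}$ once at the end. The one point you pass over silently (as does the paper) is propagating the bound to the right endpoint $t=(n+1)\tau$, where the stated hypothesis is vacuous; this is repaired by letting $t\uparrow(n+1)\tau$ in your interval estimate and using the continuity of $u$, $\beta_1$, $\beta_2$ together with the monotonicity of $\rho$.
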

\begin{proof}
The proof uses $n$ applications of Gronwall's Lemma for the intervals $[0,\tau],[\tau,2\tau],\ldots,[(n-1)\tau,n\tau]$, $[n\tau,t]$. The first application {over} $[n\tau,t]$ gives
\begin{align*}
u(t)\leq \left[(\rho(t)-\rho(n\tau))+  (1+\beta_1(t) - \beta_1(n\tau))u(n\tau)\right] e^{\int_{n\tau}^t \beta_2(s) \ ds}.
\end{align*}
The second application for the interval $[(n-1)\tau,n\tau]$ gives
\begin{align*}
u(n\tau)\leq \left[(\rho(n\tau)-\rho((n-1)\tau))+  (1+\beta_1(n\tau) - \beta_1((n-1)\tau))u((n-1)\tau)\right] e^{\int_{(n-1)\tau}^{n\tau} \beta_2(s) \ ds}.
\end{align*}
Plugging the last estimate into the first one we arrive at
\begin{align*}
u(t)&\leq (\rho(t)-\rho(n\tau))e^{\int_{n\tau}^t \beta_2(s) \ ds}+  \left[1+\beta_1(t) - \beta_1(n\tau)\right]\left[\rho(n\tau)-\rho((n-1)\tau)\right]e^{\int_{(n-1)\tau}^t \beta_2(s) \ ds} \\
&\quad+  \left[1+\beta_1(t) - \beta_1(n\tau)\right]\left[1+\beta_1(n\tau)-\beta_1((n-1)\tau)\right]u((n-1)\tau)e^{\int_{(n-1)\tau}^t \beta_2(s) \ ds}.
\end{align*}
Now, {by induction on this successive substitutions we obtain}
\begin{align*}
u(t)&\leq (\rho(t)-\rho(n\tau))e^{\int_{n\tau}^t \beta_2(s) \ ds} \\
 +&  \left[1+\beta_1(t) - \beta_1(n\tau)\right] \sum_{i=0}^{n-1} \left[\rho((n-i)\tau)-\rho((n-i-1)\tau)\right]e^{\int_{(n-i-1)\tau}^t \beta_2(s) \ ds}\hspace{-0.3cm}\prod_{j=n-i+1}^n \left[1+\beta_1(j\tau)-\beta_1((j-1)\tau)\right] \\
+&   u(0)e^{\int_{0}^t \beta_2(s) \ ds} \left[1+\beta_1(t) - \beta_1(n\tau)\right]\prod_{j=1}^n \left[1+\beta_1(j\tau)-\beta_1((j-1)\tau)\right].  
\end{align*}
Now we use
\begin{align*}
1+a\leq e^a, \quad \text{hence}\quad \prod_{i=1}^n (1+a_n) \leq e^{\sum_{i=1}^n a_n} 
\end{align*}
for $a,a_1,\ldots,a_n \in \R^+$ to get
\begin{align*}
u(t)&\leq (\rho(t)-\rho(n\tau))e^{\int_{n\tau}^t \beta_2(s) \ ds} \\
&\quad +  \sum_{i=0}^{n-1} \left[\rho((n-i)\tau)-\rho((n-i-1)\tau)\right]e^{\int_{(n-i-1)\tau}^t \beta_2(s) \ ds} e^{\beta_1(t)-\beta_1((n-i)\tau)} \\
&\quad +   u(0)e^{\int_{0}^t \beta_2(s) \ ds} e^{\beta_1(t)-\beta_1(0)} \\
&\leq u(0)e^{\beta_1(t)-\beta_1(0)+\int_{0}^t \beta_2(s) \ ds}   + (\rho(t)-\rho(0)) e^{\beta_1(t)-\beta_1(0)+\int_{0}^t \beta_2(s) \ ds}.
\end{align*}
\end{proof}

\section*{Acknowledgments}

Mattia Bongini, Massimo Fornasier, and Benjamin Scharf acknowledge the support of the ERC-Starting Grant project ``High-Dimensional Sparse Optimal Control''.

\bibliographystyle{plain}
\bibliography{JLbib}

\end{document}